\NewDocumentCommand{\code}{v}{\texttt{#1}}
\newtheorem{theorem}{Theorem}[section] 
\newtheorem{lemma}[theorem]{Lemma} 
\newtheorem{prop}[theorem]{Proposition}
\newtheorem{cor}[theorem]{Corollary}
\theoremstyle{definition}
\newtheorem{definition}[theorem]{Definition}
\newtheorem{defn}[theorem]{Definition}
\newtheorem{conjecture}[theorem]{Conjecture}
\newtheorem{fact}[theorem]{Fact}
\theoremstyle{remark}
\DeclareMathOperator{\Av}{Av}
\DeclareMathOperator{\cl}{cl}
\DeclareMathOperator{\id}{id}
\DeclareMathOperator{\Int}{Int}
\DeclareMathOperator{\LL}{{\Lambda\hspace{-.095cm}\Lambda}}
\DeclareMathOperator{\lng}{long}
\DeclareMathOperator{\nw}{nwr}
\DeclareMathOperator{\swl}{swl}
\DeclareMathOperator{\swr}{swr}
\DeclareMathOperator{\VHC}{VHC}
\DeclareMathOperator{\RedVHC}{RedVHC}
\newcommand{\E}{{\mathbb E}}
\renewcommand{\H}{{\mathcal H}}
\newcommand{\M}{{\mathcal M}}
\newcommand{\N}{{\mathcal N}}
\newcommand{\R}{{\mathfrak R}}
\newcommand{\s}{{\mathfrak s}}
\newcommand{\w}{{\mathfrak w}}
\newcommand{\W}{{\mathfrak W}}
\title{Further Bijections to Pattern-Avoiding Valid Hook Configurations}
\author{Maya Sankar}
\date{}
\let\mytitle\@title
\let\myauthor\@author
\begin{document}

\nocite{*}

\maketitle

\begin{abstract}
Valid hook configurations are combinatorial objects used to understand West's stack-sorting map. We extend existing bijections corresponding valid hook configurations to intervals in partial orders on Motzkin paths. To enumerate valid hook configurations on $312$-avoiding permutations, we build off of an existing bijection into a Motzkin poset and construct a bijection to certain well-studied closed lattice walks in the first quadrant. We use existing results about these lattice paths to show that valid hook configurations on $312$-avoiding permutations are not counted by a $D$-finite generating function, resolving a question of Defant's, and additionally to compute asymptotics for the number of such configurations. We also extend a bijection of Defant's to a correspondence between valid hook configurations on $132$-avoiding permutations and intervals in the Motzkin-Tamari posets, providing a more elegant proof of Defant's enumeration thereof. To investigate this bijection, we present a number of lemmas about valid hook configurations that are generally applicable and further study the bijections of Defant's.
\end{abstract}

\section{Introduction}
In 1990, West defined what we now call the \emph{stack-sorting map}, which maps permutations to permutations that are closer to the ``sorted'' increasing permutation \cite{west}. It became well-studied because of its relation to Knuth's ``stack-sorting algorithm'' defined in \emph{The Art of Computer Programming} over twenty years prior, which had engendered significant progress in combinatorics and computer science \cite{knuth}. A question that has garnered a lot of interest is to determine the \emph{fertility} of a given permutation $\pi$, or the number of preimages $\pi$ has under the stack-sorting map. West himself expended a lot of effort to compute fertilities of a few specific classes of permutations, and ten years later Bousquet-M\'elou provided an algorithm to determine whether a permutation had nonzero fertility \cite{bm}. However, the task of explicitly determining the fertility of any permutation remained open. See \cite{Bona, BonaSurvey, bm2, bm, DefantFertilityWilf, DefantPolyurethane, Fang2, knuth, west, Zeilberger} for more information about the stack-sorting map

In 2017, Defant presented an approach to compute the fertility of an arbitrary permutation using auxiliary structures on permutations called valid hook configurations (defined in Section \ref{section:defs}) \cite{DefantPreimages}. This new approach further allowed Defant to generalize existing theorems about the stack-sorting map and prove new results \cite{DefantDescents, DefantFertility, DefantPreimages, DefantPostorder}, providing a new framework for us to understand the stack-sorting map. The relevance of valid hook configurations prompted mathematicians to study them as combinatorial objects in their own right. In 2018, Defant, Engen, and Miller showed that valid hook configurations of length-$n$ permutations are in bijective correspondence with certain weighted set partitions \cite{DefantEngenMiller} that Josuat-Verget studied in the context of free probability theory \cite{jv}. In 2019, Defant undertook to enumerate valid hook configurations of length-$n$ permutations avoiding one or two patterns of length 3 \cite{Defant}. It is with this question we primarily concern ourselves.

\subsection{Posets on Motzkin Paths}

Defant's results primarily involved bijections to Motzkin intervals. A \emph{Motzkin path} is a lattice path consisting of steps going \emph{up} by $(1,1)$, \emph{down} by $(-1,1)$, or \emph{east} by $(1,0)$ that starts and ends on the $x$-axis and never dips below it. Let $U$, $D$, and $E$ represent up, down, and east respectively. We will typically denote a Motzkin path $P$ of length $n$ as a sequence $P_1\cdots P_n$ of elements in $\{U,D,E\}$ such that there is the same number of $U$'s and $D$'s in the full sequence $P_1\cdots P_n$ and every prefix $P_1\cdots P_i$ has at least as many $U$'s as $D$'s.
\begin{figure}[h]
\begin{center}
\begin{tikzpicture}[scale=.5]
\draw[step=1, gray, thin] (0,0) grid (8,2);
\draw[thick] (0,0) -- ++ (1,1) -- ++ (1, -1) --++(1,1)--++(1,0)--++(1,1)--++(1,-1)--++(1,-1)--++(1,0);
\end{tikzpicture}
\caption{The Motzkin path $UDUEUDDE$.}
\end{center}  
\end{figure}
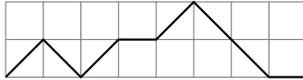

Let $M_n$ be the set of Motzkin paths of length $n$. There is a natural partial order $\leq_S$ on $M_n$ where $P\leq_SQ$ if $P$ lies below or is equal to $Q$ everywhere. Alternatively, we say that $P\leq_SQ$ if the prefix $P_1\cdots P_i$ contains at least as many $U$'s as the corresponding prefix $Q_1\cdots Q_i$ for each $i\in[n]$. Defant denotes this poset as $\M_n^S:=(M_n,\leq_S)$ and calls it the $n^\text{th}$ \emph{Motzkin-Stanley} lattice. This references the \emph{Stanley lattice} defined by Bernardi and Bonichon \cite{bb} which restricts $\M_n^S$ to paths without east steps; Ferrari and Pinzani proved that both partial orders are lattices \cite{Ferrari}. Defant additionally defines two successively stronger partial orders on $M_n$, which we denote as $\M_n^C:=(M_n,\leq_C)$ and $\M_n^T:=(M_n,\leq_T)$ and will define in Section \ref{section:Motzkin defs}. The partial order $\M_n^T$ was introduced by Fang \cite{Fang} as an analogue to classical Tamari lattices, which have garnered much interest from researchers in combinatorics, group theory, theoretical computer science, algebraic geometry, and algebraic topology \cite{Combe, Csar, Early, Fang, Geyer, Huang, Knuth2, Loday, Pallo, Tamari}. Defant referred to $\M_n^T$ as the $n^{\text{th}}$ \emph{Motzkin-Tamari poset}, and we will call $\M_n^C$ the $n^\text{th}$ \emph{Motzkin-Defant poset} analogously.

\subsection{Main Results}

In \cite{Defant}, Defant provides generating functions and asymptotics for the number of valid hook configurations of length-$n$ permutations avoiding most specified length-3 patterns. However, he was unable to compute either in the 312-avoiding case. Defant constructed a bijection $\widehat\LL_n$ (pronounced ``double lambda'') from valid hook configurations of 312-avoiding permutations to intervals in the Motzkin-Defant poset $\M_{n-1}^C$ and conjectured that the numbers of these intervals could be enumerated by the binomial transform of certain lattice paths in $\mathbb N^2$ studied in \cite{brs}. We prove Defant's conjecture in Section \ref{section:312} by defining a related class of lattice paths with the desired cardinality and providing a bijection from intervals in the Motzkin-Defant poset to this class of paths. Moreover, we use the results from \cite{brs} to compute asymptotics in the 312-case and to show that Defant's goal of producing an explicit generating function is untenable in this case, as the generating function is not algebraic or even $D$-finite. Even if we ignore valid hook configurations, this bijection connects intervals in posets of Motzkin paths and lattice paths in $\mathbb N^2$, which are more classical objects.

Defant additionally shows in \cite{Defant} that the number of valid hook configurations of 132-avoiding length-$n$ permutations and the number of intervals in the Motzkin-Tamari poset $\M_{n-1}^T$ are the same, by comparing their generating functions. He asks for an explicit bijection between the two sets. In Sections \ref{section:swl} through 7 we construct such a bijection by providing an injection from valid hook configurations of 132-avoiding permutations to valid hook configurations of 312-avoiding permutations and composing the injection with Defant's bijection $\widehat\LL_n$. In addition to solving Defant's problem, this injection suggests a more general result about inequalities among valid hook configurations avoiding different inequalities, which we posit in Section \ref{section:future work}. We construct the injection by generalizing a map $\swl$ introduced by Defant \cite{hannaspaper}. In order to study our map, we build up a number of generally applicable results about $\swl$ in Section \ref{section:swl}. In addition, we present an equivalent but more intuitive and versatile definition of a valid hook configuration in Section \ref{section:defs}. This new theory of valid hook configurations is also generally applicable: in addition to simplifying the analysis of our injection, it would have, for example, allowed Defant to define $\widehat\LL_n$ more easily.

\section{Permutations and Valid Hook Configurations}\label{section:defs}

Let $S_n$ be the set of permutations $\pi:[n]\to[n]$. We will treat a permutation $\pi\in S_n$ as a word $\pi_1\cdots\pi_n$ in which each term $\pi_i$ is a distinct element of $[n]$. For $\sigma\in S_k$ and $\pi\in S_n$, we say that a length-$k$ subsequence $\pi_{a_1}\cdots\pi_{a_k}$ \emph{matches} the pattern $\sigma$ if the indices $a_i$ are increasing and $\pi_{a_i}>\pi_{a_j}$ if and only if $\sigma_i>\sigma_j$. We say $\pi$ \emph{avoids} $\sigma$ if no such subsequence of $\pi$ matches $\sigma$ and we denote the set of $\sigma$-avoiding permutations $\pi\in S_n$ by $\Av_n(\sigma)$. The \emph{plot} of $\pi$ is defined to be the set of points $\{(i,\pi_i):i\in[n]\}$. A \emph{descent} occurs when $\pi_i>\pi_{i+1}$; we will call $\pi_i$ the \emph{descent top} and $\pi_{i+1}$ the \emph{descent bottom}.

A \emph{hook} on $\pi$ is constructed by drawing a vertical line up from a point $(i,\pi_i)$ and then drawing a horizontal line right to a point $(j,\pi_j)$. This requires both $i<j$ and $\pi_i<\pi_j$. We call $(i,\pi_i)$ and $(j,\pi_j)$ the \emph{southwest}  and \emph{northeast} endpoints of the hook, respectively.

\begin{figure}[h]
\begin{center}
\begin{tikzpicture}[scale=.8]
\fill (1, 3) circle (.9mm) node[above left] {$(1,3)$} (2, 4) circle (.9mm) node[below right] {$(2,4)$} (3, 1) circle (.9mm) node[above left] {$(3,1)$} (4, 5) circle (.9mm) node[above left] {$(4,5)$} (5,2) circle (.9mm) node[above left] {$(5,2)$};
\draw (1,3) --(1,5) -- (4,5);
\end{tikzpicture}
\caption{The plot of $\pi=34152$ with a hook from $(1,3)$ to $(4,5)$.}
\end{center}  
\end{figure}
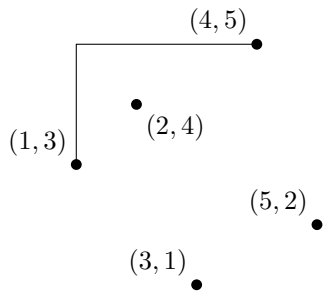

\begin{defn}\label{def:VHC}
A \emph{valid hook configuration} on a permutation $\pi$ is a set of hooks such that
\begin{enumerate}[label=(\roman*)]
\item The set of southwest hook endpoints is exactly the set of descent tops.
\item No point in the plot of $\pi$ may lie above a hook.
\item Hooks cannot intersect each other except at their endpoints.
\end{enumerate}
Examples of valid and invalid hook configurations are given in Figures \ref{fig:valid HC} and \ref{fig:invalid HC}.
\end{defn}

\begin{figure}[h]\begin{center}
\begin{tikzpicture}[scale=.8]
	\coordinate (P1) at (1,3);
	\coordinate (P2) at (2,2);
	\coordinate (P3) at (3,1);
	\coordinate (P4) at (4,5);
	\coordinate (P5) at (5,6);
	\coordinate (P6) at (6,4);
	\coordinate (P7) at (7,7);
	\foreach \pt in {P1, P2, P3, P4, P5, P6, P7}
		\fill (\pt) circle (.9mm);
	\foreach \a/\b in {P1/P5, P2/P4, P5/P7}
		\draw (\a) -- (\a)|-(\b) -- (\b);
\end{tikzpicture}
\caption{A valid hook configuration on the permutation 3215647.}
\label{fig:valid HC}
\end{center}\end{figure}
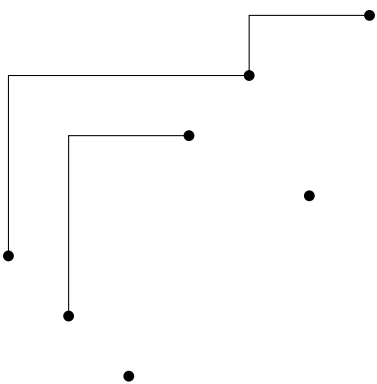
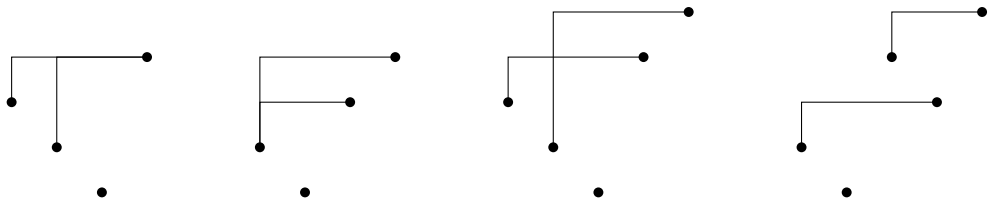
\begin{figure}[h]\begin{center}
\begin{tikzpicture}[scale=.6]
	\coordinate (S2) at (0,0);
	\coordinate (S3) at (5.5,0);
	\coordinate (S1) at ($(S3) + (5.5,0)$);
	\coordinate (S4) at ($(S1) + (6.5,0)$);
	\coordinate (P1) at (1,3);
	\coordinate (P2) at (2,2);
	\coordinate (P3) at (3,1);
	\coordinate (P4) at (4,4);
	\coordinate (P5) at (5,5);
	\coordinate (Q1) at (1,2);
	\coordinate (Q2) at (2,1);
	\coordinate (Q3) at (3,3);
	\coordinate (Q4) at (4,4);
	\coordinate (R3) at (3,4);
	\coordinate (R4) at (4,3);
	\coordinate (R5) at (5,5);
	\foreach \pt in {P1, P2, P3, P4, P5}
		\fill ($ (\pt) + (S1) $) circle (1.1mm);
	\foreach \pt in {P1, P2, P3, P4}
		\fill ($ (\pt) + (S2) $) circle (1.1mm);
	\foreach \pt in {Q1, Q2, Q3, Q4}
		\fill ($ (\pt) + (S3) $) circle (1.1mm);
	\foreach \pt in {Q1, Q2, R3, R4, R5}
		\fill ($ (\pt) + (S4) $) circle (1.1mm);
	\foreach \a/\b/\s in {P1/P4/S1, P2/P5/S1, P1/P4/S2, P2/P4/S2, Q1/Q3/S3, Q1/Q4/S3, Q1/R4/S4, R3/R5/S4}
		\draw ($(\a)+(\s)$) -- ($(\a)+(\s)$)|-($(\b)+(\s)$) -- ($(\b)+(\s)$);
\end{tikzpicture}
\caption{Hook configurations on permutations failing conditions (ii) or (iii) of Definition \ref{def:VHC}.}
\label{fig:invalid HC}
\end{center}\end{figure}

For $S\subset S_n$, we write $\VHC(S)$ for the set of valid hook configurations on permutations in $S$. The following two propositions describe our new formulation of valid hook configurations and how to apply it.

\begin{prop}
Let $\pi$ be a permutation and $V$ a set of points in the plot of $\pi$. Then there is at most one valid hook configuration whose set of northeast endpoints is exactly $V$.
\end{prop}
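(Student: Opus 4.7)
The plan is to reduce the problem to uniqueness of a noncrossing matching. By condition (i) of Definition~\ref{def:VHC}, any valid hook configuration on $\pi$ has the set $D$ of descent tops of $\pi$ as its set of southwest endpoints, so the configuration is determined by the bijection $M \colon D \to V$ (we may assume $|D| = |V|$, else there is nothing to prove), and the goal becomes showing that $M$ is forced.

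The key lemma is a noncrossing property: no two hooks $(i_1, \pi_{i_1}) \to (j_1, \pi_{j_1})$ and $(i_2, \pi_{i_2}) \to (j_2, \pi_{j_2})$ in a valid hook configuration can satisfy $i_1 < i_2 < j_1 < j_2$. If they did, the vertical segment of the second hook would sit at $x = i_2 \in (i_1, j_1)$, spanning heights $[\pi_{i_2}, \pi_{j_2}]$, and condition (iii) would forbid it from meeting the horizontal segment of the first at height $\pi_{j_1}$, so $\pi_{j_1} \notin [\pi_{i_2}, \pi_{j_2}]$. But $\pi_{j_1} > \pi_{j_2}$ would make $(j_1, \pi_{j_1})$ a plot point above the second hook, while $\pi_{j_1} < \pi_{i_2}$ would make $(i_2, \pi_{i_2})$ a plot point above the first; either violates condition (ii).

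Given the noncrossing lemma, I would pin down $M$ by a left-to-right sweep. Enumerate $V = \{v^{(1)}, \ldots, v^{(m)}\}$ in increasing order of $x$-coordinate. I claim that, once $M^{-1}(v^{(1)}), \ldots, M^{-1}(v^{(k-1)})$ have been forced, $M^{-1}(v^{(k)})$ must be the largest element of $D \setminus \{M^{-1}(v^{(1)}), \ldots, M^{-1}(v^{(k-1)})\}$ lying to the left of $v^{(k)}$. Indeed, if some unused descent top $d$ lay strictly between $M^{-1}(v^{(k)})$ and the $x$-coordinate of $v^{(k)}$, then $M(d) = v^{(\ell)}$ for some $\ell > k$, so the $x$-coordinate of $M(d)$ exceeds that of $v^{(k)}$, and the hooks $M^{-1}(v^{(k)}) \to v^{(k)}$ and $d \to v^{(\ell)}$ would exhibit exactly the forbidden pattern from the previous paragraph. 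Proceeding in turn for $k = 1, \ldots, m$ determines $M$ completely. The main substance lies in the noncrossing lemma; the left-to-right sweep is bookkeeping, and I do not expect any serious obstacle beyond the careful case analysis for the two ways two hooks could fail to intersect.
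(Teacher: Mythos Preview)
Your proof is correct and follows essentially the same approach as the paper. Both arguments reduce the question to the noncrossing property of the bijection between descent tops and $V$: the paper encodes the points of $D\sqcup V$ as a parenthesis sequence, proves the matching must be balanced by exactly your case analysis on conditions (ii) and (iii), and then invokes the standard fact that a balanced matching is unique; your left-to-right sweep is precisely the usual proof of that fact unwound. The only cosmetic difference is that the paper handles the case of a point lying in $D\cap V$ explicitly by ordering the close before the open at that position, whereas in your formulation this is implicit in requiring $M^{-1}(v^{(k)})$ to lie \emph{strictly} left of $v^{(k)}$.
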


\begin{proof}
Let $U$ be the set of descent tops of $\pi$. List the points of $U\sqcup V:=U\times\{0\}\cup V\times\{1\}$ from left to right to get a sequence $(P_1,\delta_1),\ldots,(P_{|U|+|V|},\delta_{|U|+|V|})$ with $(P_i, 1)$ listed before $(P_i, 0)$ when $P_i\in U\cap V$. We can turn this sequence into a sequence of open and close parentheses by $(P_i,\delta_i)\mapsto\left\{\begin{array}{cl}\code{`('}&\text{if }\delta_i=0\\\code{`)'}&\text{if }\delta_i=1\end{array}\right.$. Every valid hook configuration on $\pi$ with $V$ the set of northeast endpoints gives a different matching of left and right parentheses. Furthermore, we show this parenthesis matching must be \emph{balanced}, meaning that a matched pair \texttt{()} encloses either both or neither parentheses from any other matched pair. 

If the matching were not balanced, we would have two improperly matched pairs of parentheses, as shown in Figure \ref{fig:unbalanced parentheses}.
\begin{figure}[h]\begin{center}
\begin{tikzpicture}
\coordinate (A) at (1,1);
\coordinate (B) at (3, 2.3);
\coordinate (C) at (2, 2.9);
\coordinate (D) at  (4, 3.5);
\coordinate(Ca) at ($ (C|-A)!.5!(C|-B) $);
\draw[dashed] (Ca) -- (C);
\draw (A) -- (A|-B) node[left] {$H_1$} -- (B);
\draw (C) -- (C|-D) node[left] {$H_2$}-- (D);
\fill (A) circle (.7mm) (B) circle (.7mm) (D) circle (.7mm);
\fill[black!40] (C) circle (.7mm) (Ca) circle(.7mm);
\draw(A) node[below] {\hphantom{$_1$}\code{(}$_1$};
\draw(A-|B) node[below] {\hphantom{$_1$}\code{)}$_1$};
\draw(A-|C) node[below] {\hphantom{$_2$}\code{(}$_2$};
\draw(A-|D) node[below] {\hphantom{$_2$}\code{)}$_2$};
\end{tikzpicture}
\caption{Hooks corresponding to two improperly matched pairs of parentheses.}
\label{fig:unbalanced parentheses}
\end{center}\end{figure}
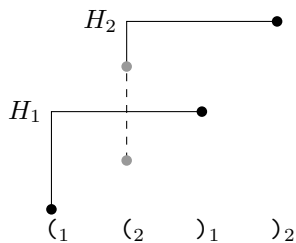
If the southwest endpoint of hook $H_2$ lies above hook $H_1$ then condition (ii) of Definition \ref{def:VHC} is violated, and if it lies below $H_1$ then condition (iii) is violated. Hence, this matching is balanced.

However, it is well-known that any sequence of parentheses has at most one balanced matching, so it follows that there can be at most one valid hook configuration on $\pi$ with set of northeast endpoints $V$.
\end{proof}

We will typically refer to a valid hook configuration by the set of its northeast endpoints, and often write $(\pi, V)$ for a valid hook configuration on $\pi$ with $V$ the set of northeast endpoints.

\begin{prop}\label{prop:new VHC}
Assume $V$ is a set of points in the plot of a permutation $\pi$ and there is a bijection $\phi$ from descent tops in $\pi$ to $V$ such that if $\phi(i,\pi_i)=(j,\pi_j)$, then
\begin{enumerate}[label=(\roman*)]
\item We have $j>i$ and $\pi_j>\pi_i$.
\item There is no $k$ with $i<k<j$ and $\pi_k>\pi_j$.
\end{enumerate}
Then $V$ is a valid hook configuration on $\pi$.
\end{prop}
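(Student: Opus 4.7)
The plan is to construct a bijection $\phi^*$ (possibly differing from the given $\phi$) whose associated hooks form a valid hook configuration with northeast endpoint set $V$, by iteratively uncrossing pairs of crossing hooks until none remain. Note that $\phi$ itself need not already yield a valid configuration: for instance, if $\pi = 321456$, then the matching sending $(1,3) \mapsto (5,5)$ and $(2,2) \mapsto (6,6)$ satisfies the hypotheses, but the resulting hooks cross, so some rearrangement may be required to expose the underlying VHC. The previous proposition tells us any such VHC is unique, so it suffices to produce one.

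Suppose hooks $H_1, H_2$ from $\phi(i_a, \pi_{i_a}) = (j_a, \pi_{j_a})$ cross, with $i_1 < i_2$ (WLOG, since distinct descent tops have distinct $x$-coordinates). The L-shape of the hooks forces the crossing to arise from the vertical segment of $H_2$ intersecting the horizontal segment of $H_1$, giving $i_1 < i_2 < j_1$ and $\pi_{i_2} < \pi_{j_1} < \pi_{j_2}$; then condition (ii) applied to $H_1$ at $k = j_2$ rules out $j_2 \in (i_1, j_1)$ and forces $j_2 > j_1$. I would then define $\phi'$ by the swap $\phi'(i_1, \pi_{i_1}) = (j_2, \pi_{j_2})$ and $\phi'(i_2, \pi_{i_2}) = (j_1, \pi_{j_1})$, leaving other assignments unchanged. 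Condition (i) of the proposition is immediate from the inequalities above. For condition (ii) on the new hook $(i_1) \to (j_2)$, split the range: indices $k \in (i_1, j_1]$ satisfy $\pi_k \le \pi_{j_1} < \pi_{j_2}$ by condition (ii) on $H_1$, and indices $k \in (j_1, j_2) \subseteq (i_2, j_2)$ satisfy $\pi_k \le \pi_{j_2}$ by condition (ii) on $H_2$. For the new hook $(i_2) \to (j_1)$, any $k \in (i_2, j_1) \subseteq (i_1, j_1)$ is controlled by condition (ii) on $H_1$. Other hooks are untouched.

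The main obstacle is ensuring that this swapping process terminates, since a single swap might introduce new crossings with hooks outside $\{H_1, H_2\}$. I would resolve this with the monovariant $\Phi(\phi) = \sum_{\phi(i,\pi_i) = (j,\pi_j)} ij$: a direct calculation shows the swap changes $\Phi$ by $(i_1 - i_2)(j_2 - j_1) < 0$, i.e.\ $\Phi$ drops by $(i_2 - i_1)(j_2 - j_1) \ge 1$. Since $\Phi$ is a nonnegative integer, only finitely many swaps can occur, and the process halts at some $\phi^*$ whose hooks are pairwise non-crossing while still satisfying conditions (i) and (ii) of the proposition. The induced hooks then verify all three clauses of Definition \ref{def:VHC}: (i) because $\phi^*$ is a bijection from the descent tops, (ii) because condition (ii) of the proposition is preserved throughout and matches clause (ii) of the definition, and (iii) because no pair crosses. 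Thus $V$ is the northeast endpoint set of a valid hook configuration on $\pi$.
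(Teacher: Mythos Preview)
Your argument is correct and follows the same core idea as the paper: repeatedly swap the northeast endpoints of a crossing pair of hooks until no crossings remain, checking that conditions (i) and (ii) of the proposition survive each swap. Where you diverge is in the termination argument. The paper shows that each swap strictly decreases the total number of pairwise hook intersections, which requires a geometric check that no third hook $H_3$ picks up more intersections with the new pair $H_1',H_2'$ than it had with $H_1,H_2$. Your monovariant $\Phi(\phi)=\sum ij$ sidesteps that analysis entirely: the swap changes $i_1j_1+i_2j_2$ to $i_1j_2+i_2j_1$, a drop of $(i_2-i_1)(j_2-j_1)>0$ independent of the other hooks. This is cleaner. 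Your algebraic verification of condition (ii) for the new hooks, splitting $(i_1,j_2)$ at $j_1$, is equivalent to the paper's geometric observation that any point above $H_1'$ or $H_2'$ would already lie above $H_1$ or $H_2$.
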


\begin{proof}
We can think of $\phi$ as a set of hooks on $\pi$ because of condition (i). Condition (ii) of the proposition implies that there are no points above any of these hooks, so everything but condition (ii) of Definition \ref{def:VHC} is satisfied. We induct on the number of hook intersections in $\phi$. When there are none, this set of hooks is a valid hook configuration.

Now assume two hooks $H_1$ and $H_2$ of $\phi$ intersect. Let $H_1=ABC$ and $H_2=DEF$ as shown at left in Figure \ref{fig:hooks to more hooks}.
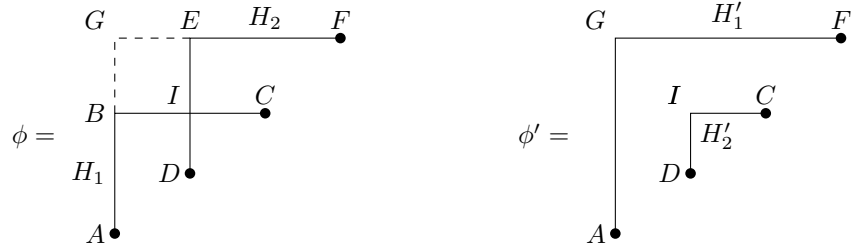
\begin{figure}[h]
\begin{center}
$\phi=\begin{tikzpicture}[baseline=(O)]
\coordinate (A) at (1,1.4);
\coordinate (D) at (2,2.2);
\coordinate (C) at (3, 3);
\coordinate (B) at (A|-C);
\coordinate (F) at (4, 4);
\coordinate (E) at (D|-F);
\coordinate (G) at (A|-F);
\coordinate (I) at (D|-C);
\coordinate (O) at ($ (C)!.5!(D) $);
\fill (A) circle (.7mm) (C) circle (.7mm) (D) circle (.7mm) (F) circle (.7mm);
\draw (A) node[left] {$A$}-- (B) node[pos=.5, left] {$H_1$}  -- (C) node[above] {$C$};
\draw (F) node[above] {$F$}-- (E) node[pos=.5, above] {$H_2$} -- (D) node[left] {$D$};
\draw [dashed] (B) node[left] {$B$} -- (G) node[above left] {$G$} -- (E) node[above] {$E$};
\draw (I) node[above left] {$I$};
\end{tikzpicture}
\qquad\qquad\qquad
\phi'=\begin{tikzpicture}[baseline=(O)]
\coordinate (A) at (1,1.4);
\coordinate (D) at (2,2.2);
\coordinate (C) at (3, 3);
\coordinate (B) at (A|-C);
\coordinate (F) at (4, 4);
\coordinate (E) at (D|-F);
\coordinate (G) at (A|-F);
\coordinate (I) at (D|-C);
\coordinate (O) at ($ (C)!.5!(D) $);
\fill (A) circle (.7mm) (C) circle (.7mm) (D) circle (.7mm) (F) circle (.7mm);
\draw (D) node[left] {$D$} -- (I) node[above left]{$I$} node[below right]{$H'_2$} -- (C) node[above]{$C$};
\draw (A) node[left] {$A$} -- (G) node[above left] {$G$} -- (F) node[above] {$F$} node[pos=.5, above] {$H'_1$};
\draw (I) node[above left] {$I$};
\end{tikzpicture}$
\end{center}
\caption{Original hooks $H_1$ and $H_2$ at left and replacement hooks $H'_1$ and $H'_2$ at right.}
\label{fig:hooks to more hooks}
\end{figure}
Let $\phi'$ be the set of hooks achieved from $\phi$ by replacing $H_1$ and $H_2$ with $H'_1=AGF$ and $H'_2=DIC$, as shown at right in the same figure. Any point above $H'_1$ or $H'_2$ would also be a point above $H_1$ or $H_2$, so there are no points above hooks in $\phi'$. To apply the inductive hypothesis we must show that the number of hook intersections in $\phi'$ is strictly less than the number of hook intersections in $\phi$. Consider a hook $H_3\in\phi$ distinct from $H_1$ and $H_2$. If $H_3$ intersects $GE$ or $GB$ then it must intersect $BI$ or $EI$ respectively, or else there would be a hook endpoint in square $GEIB$ above $H_1$. It follows that $H_3$ intersects $H_1$ and $H_2$ at least as many times as it intersects $H'_1$ and $H'_2$. Furthermore, the hook intersection between $H_1$ and $H_2$ has been removed in $\phi'$. It follows that $\phi'$ has strictly fewer hook intersections than $\phi$, and we can apply the inductive hypothesis to show that $V$ is a valid hook configuration on $\pi$.
\end{proof}

\section{Motzkin Intervals}\label{section:Motzkin defs}

We define the two posets $\M_n^C$ and $\M_n^T$ on $M_n$. Recall that the Motzkin-Stanley lattice has the relation $P\leq_SQ$ if $P$ is below or equal to $Q$.

\begin{defn}\label{def:y-displacement}
If $s$ is a step in a Motzkin path, let $\delta(s)$ be the $y$-displacement of that step. That is, $\delta(U)=1$, $\delta(E)=0$, and $\delta(D)=-1$.
\end{defn}

\begin{defn}\label{def:class}
The \emph{class} of a path $P\in M_n$, denoted $\cl(P)$, is the subsequence of $P$ consisting of $U$'s and $E$'s.
\end{defn}

\begin{defn}\label{def:long}
Write $P\in M_n$ as $P=X_1D^{\gamma_1}\cdots X_\ell D^{\gamma_\ell}$ with $X_i\in\{U,E\}$. For $i\in[\ell]$, we define $\lng_i(P)$ to be the length of the shortest consecutive substring of $P$ starting at $X_i$ that forms a Motzkin sub-path.
\end{defn}

As an example, if we take $P=UDEUEUDD$ pictured in Figure \ref{fig:class and longs} then $\cl(P)=UEUEU$ and we have $\lng_1(P)=2$, $\lng_2(P)=1$, $\lng_3(P)=5$, $\lng_4(P)=1$, and $\lng_5(P)=2$.
\begin{figure}[h]
\begin{center}
\begin{tikzpicture}[scale=.5]
\draw[step=1, gray, thin] (0,0) grid (8,2);
\draw[thick] (0,0) -- ++ (1,1) -- ++ (1, -1)--++(1,0)--++(1,1)--++(1,0)--++(1,1)--++(1,-1)--++(1,-1);
\end{tikzpicture}
\end{center}  
\caption{The Motzkin path $P=UDEUEUDD$.}
\label{fig:class and longs}
\end{figure}
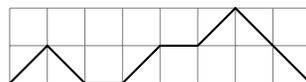
It is also worth noting that if $X_i=E$ then $\lng_i(P)=1$ and if $X_i=U$ then $\lng_i(P)\geq 2$. This definition is different from the one provided in \cite{Defant}, but it is easier to work with and does not affect the definition of the Motzkin-Tamari poset below.

\begin{defn}\label{def:colin poset}
We say $P\leq_CQ$ if $\cl(P)=\cl(Q)$ and $P\leq_SQ$. We write $\M_n^C:=(M_n,\leq_C)$ and call this the $n^\text{th}$ \emph{Motzkin-Defant poset}.
\end{defn}

\begin{defn}\label{def:tamari poset}
If $\cl(P)=\cl(Q)$ then we can write $P=X_1D^{\gamma_1}\cdots X_{\ell}D^{\gamma_\ell}$ and $Q=X_1D^{\gamma'_1}\cdots X_\ell D^{\gamma'_\ell}$ as in Definition \ref{def:long}. We say $P\leq_TQ$ if $\cl(P)=\cl(Q)$ and $\lng_i(P)\leq\lng_i(Q)$ for all $i\in[\ell]$. We write $\M_n^T:=(M_n,\leq_T)$ and call this the $n^\text{th}$ \emph{Motzkin-Tamari poset}.
\end{defn}

\begin{defn}\label{def:interval}
An \emph{interval} in a poset $\mathcal P$ is an ordered pair of elements $(x_1,x_2)\in\mathcal P^2$ for which $x_1\leq_{\mathcal P}x_2$. We write $\Int(\mathcal P)$ for the set of intervals in $\mathcal P$.
\end{defn}

We can now notate the domain and range of Defant's bijection $\widehat\LL_n$, which we will describe in Section \ref{section:define LL}. It is a map $\VHC(\Av_n(312))\to\Int(\M_{n-1}^C)$.

\section{312-Avoiding Valid Hook Configurations}\label{section:312}

This section enumerates $|\VHC(\Av_n(312))|$ using the following lattice paths.

\begin{defn}\label{def:mathfrak w}
Let $\w(k)$ be the number of lattice paths of length $k$ in the first quadrant with steps in $\{(-1, 0), (-1, 1), (0, -1), (0, 1), (1, -1)\}$ starting and ending at the origin, as in the OEIS sequence A151347.
\end{defn}

Bostan, Raschel, and Salvy studied $\w(k)$ in \cite{brs} and computed its asymptotic growth to be $\w(k)=\Theta\left(\frac{4.729032^k}{k^{4.514931}}\right)$, where both decimals are truncated approximations. They additionally showed that the generating function $F(x)=\sum_{k=0}^\infty\w(k)x^k$ is not \emph{$D$-finite}, meaning that there is no linear dependence among the generating functions $F(x),F'(x),F''(x),\ldots$ with coefficients in $\mathbb R[x]$. All algebraic functions are $D$-finite, as well as some ``nice'' transcendental functions like $\sin$ and $\cos$. We will prove a conjecture of Defant's that $|\VHC(\Av_n(312))|=\sum_{k=0}^{n-1}\binom{n-1}k\w(k)$. We will further use the results of \cite{brs} to analyze the asymptotics of $|\VHC(\Av_n(312))|$ and show that the generating function $G(x)=\sum_{n\geq 1}|\VHC(\Av_n(312))|x^n$ is not $D$-finite, explaining why Defant was unable to find an explicit algebraic form for it.

\begin{defn}
Let $\N_n$ be the set of pairs $(X,Y)$ of Motzkin paths of length $n$ whose $i^\text{th}$ coordinates $(X_i,Y_i)$ are forbidden from being $(D,D)$, $(U,U)$, or $(U,E)$. That is,
\[\N_n=\{(X,Y)\in M_n^2:(X_i,Y_i)\in \{(D,E), (D,U), (E,D), (E,E), (E,U), (U,D)\}\text{ for each }i\in[n]\}.\]
\end{defn}

\begin{prop}\label{prop:size of nn}
We have $|\N_n|=\sum_{k=0}^n\binom nk\mathfrak{w}(k)$.
\end{prop}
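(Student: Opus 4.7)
The plan is to exhibit a bijection between $\mathcal{N}_n$ and the set of pairs $(I, w)$, where $I \subseteq [n]$ and $w$ is a lattice path counted by $\mathfrak{w}(|I|)$. Summing over $k = |I|$ and using that there are $\binom{n}{k}$ choices of $I$ then gives the claimed identity.

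The key observation is that, under the map $(X_i, Y_i) \mapsto (\delta(X_i), \delta(Y_i))$ from Definition \ref{def:y-displacement}, the six allowed pairs correspond exactly to the five $\mathfrak{w}$-steps plus a ``null'' step:
\[
(D,E)\mapsto(-1,0),\; (D,U)\mapsto(-1,1),\; (E,D)\mapsto(0,-1),\; (E,U)\mapsto(0,1),\; (U,D)\mapsto(1,-1),
\]
and $(E,E) \mapsto (0,0)$. Under this assignment, the running partial sum of $(\delta(X_i), \delta(Y_i))$ after $i$ steps is precisely the pair of heights $(h_X(i), h_Y(i))$ of $X$ and $Y$, and the Motzkin conditions on $X$ and $Y$ individually translate into the first-quadrant condition on the resulting two-dimensional path, with a return to the origin at the end.

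Given $(X,Y) \in \mathcal{N}_n$, I would define $I = \{i \in [n] : (X_i, Y_i) \neq (E,E)\}$ and let $w$ be the concatenation of the images $(\delta(X_i), \delta(Y_i))$ for $i \in I$ in order. Since dropping $(0,0)$ steps does not change the partial sums, $w$ is a nonnegative lattice path with steps in $\{(-1,0),(-1,1),(0,-1),(0,1),(1,-1)\}$ starting and ending at the origin, i.e., $w$ is a $\mathfrak{w}$-path of length $|I|$. Conversely, given $I \subseteq [n]$ and a $\mathfrak{w}$-path $w$ of length $|I|$, I would place the preimage pair of the $j$-th step of $w$ at the $j$-th position of $I$ and fill the remaining positions with $(E,E)$; first-quadrant nonnegativity of $w$ ensures $X$ and $Y$ individually stay weakly above the $x$-axis, and $w$ returning to the origin forces both $X$ and $Y$ to end on the $x$-axis, so $(X,Y) \in \mathcal{N}_n$.

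The two constructions are evidently mutual inverses once correctness of both directions is checked. The main (minor) obstacle is simply verifying that the forward map outputs a path staying weakly in the first quadrant and that the inverse map outputs two genuine Motzkin paths; both reduce to the single remark that the partial sum of $(\delta(X_i), \delta(Y_i))$ equals $(h_X(i), h_Y(i))$ and that dropping or inserting $(0,0)$ steps does not affect this sum. Once the bijection is established, partitioning $\mathcal{N}_n$ by the value of $k = |I|$ yields
\[
|\mathcal{N}_n| = \sum_{k=0}^{n}\binom{n}{k}\mathfrak{w}(k),
\]
as desired.
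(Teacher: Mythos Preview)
Your proof is correct and follows essentially the same approach as the paper: both convert $(X,Y)\in\N_n$ to a two-dimensional lattice path via $(X_i,Y_i)\mapsto(\delta(X_i),\delta(Y_i))$, observe that the allowed pairs correspond exactly to the five $\mathfrak w$-steps together with the null step $(0,0)$, and obtain the binomial sum by recording the positions of the null steps. The only cosmetic difference is that you index by the set $I$ of non-$(E,E)$ positions while the paper indexes by the complementary set of $(E,E)$ positions, which of course yields the same identity after the substitution $k\leftrightarrow n-k$.
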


\begin{proof}
Convert $(X,Y)\in\N_n$ into a lattice path with $i^\text{th}$ step $(\delta(X_i),\delta(Y_i))$. This is the same as a lattice path of length $n-k$ as in Definition \ref{def:mathfrak w}, where $k$ is the number of $(0,0)$ steps, together with a set of indices in $\binom{[n]}k$ identifying which steps are $(0,0)$.
\end{proof}

We will show that $|\VHC(\Av_{n+1}(312))|=|\N_n|$. To construct a bijection between the two sets we begin with the bijection $\widehat\LL_{n+1}:\VHC(\Av_{n+1}(312))\to\Int(\M_{n}^C)$ constructed by Defant in \cite{Defant}. To complete the bijection into $\N_n$, we construct a bijection $\varphi_n:\Int(\M_n^C)\to\N_n$.

\begin{definition}\label{def:Dyck prefix}
A \emph{Dyck prefix} of length $n$ is a sequence $A=(A_1,\ldots,A_n)$ of $u$'s and $d$'s such that for each $i\leq n$, there are at least as many $u$'s as $d$'s in $A_1,\ldots,A_i$. If $A$ and $A'$ are two Dyck prefixes of the same length, we say $A\geq A'$ if $A_1,\ldots,A_i$ has at least as many $u$'s as $A'_1,\ldots,A'_i$ for each $i\leq n$.
\end{definition}

We can also interpret Definition \ref{def:Dyck prefix} visually. Letting $u$'s and $d$'s correspond to lattice steps of $(1,1)$ and $(1,-1)$ respectively, a Dyck prefix is a lattice path with these steps that starts on the $x$-axis and never goes below it; or a sequence of $u$'s and $d$'s that is the prefix of some Dyck path. In this interpretation, $A\geq A'$ if the lattice path corresponding to $A$ lies above the lattice path corresponding to $A'$.

\begin{definition}
The \emph{support} of a length-$n$ Motzkin path $P$, written $\overline{P}$, is a Dyck prefix of length $n$ with $\overline P_i=u$ if $P_i$ is $U$ or  $E$ and $\overline P_i=d$ if $P_i$ is $D$.
\end{definition}

It is clear that the support $\overline{P}$ of a Motzkin path is in fact a Dyck prefix because, as a lattice path it must lie above $P$, which in turn lies above the $x$-axis. We also note that every Motzkin path can be recovered from its class and support, although not every (class, support) pair represents a valid Motzkin path.

\begin{lemma}\label{lemma:describe larger paths}
For a fixed Motzkin path $P$ of length $n$, the Motzkin paths $Q\geq_CP$ are in bijection with Dyck prefixes $A\geq\overline P$ with the same numbers of $u$'s and $d$'s as $\overline P$. The bijection takes $Q$ to its support, $\overline Q$.
\end{lemma}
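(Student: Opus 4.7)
The plan is to analyze both sides of the proposed correspondence in terms of the single statistic $d_Q(i)$, the number of $D$-steps among $Q_1, \ldots, Q_i$. Fix the class $\cl(P) = c_1 \cdots c_m$ with $c_j \in \{U, E\}$ and let $\nu(k)$ denote the number of $U$'s among $c_1, \ldots, c_k$. For any Motzkin path $Q$ with $\cl(Q) = \cl(P)$, the first $i - d_Q(i)$ letters of the class appear in $Q_1 \cdots Q_i$, so the heights of $Q$ and $\overline Q$ at position $i$ are
\[
y_Q(i) = \nu\bigl(i - d_Q(i)\bigr) - d_Q(i), \qquad y_{\overline Q}(i) = i - 2 d_Q(i).
\]
Because $\nu$ is non-decreasing, both expressions are \emph{strictly} decreasing in $d_Q(i)$ (they drop by $1$ or $2$ when $d_Q(i)$ is incremented).

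For the forward direction, if $Q \geq_C P$ then $\cl(Q) = \cl(P)$ forces $\overline Q$ and $\overline P$ to have equal numbers of $u$'s and $d$'s, since the $d$-count is the number of $D$-steps and the $u$-count is the length of the class. Using the strict monotonicity above, $Q \geq_S P$ (i.e. $y_Q(i) \geq y_P(i)$ for all $i$) is equivalent to $d_Q(i) \leq d_P(i)$ for all $i$, which in turn is the condition $\overline Q \geq \overline P$. Thus $Q \mapsto \overline Q$ lands in the claimed target set.

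Injectivity is immediate: a Motzkin path is determined by its class and its support. Given $\cl(P)$ and a Dyck prefix $A$ with the correct $u$/$d$ counts, I place $D$ at each position where $A_i = d$ and fill the remaining positions with $c_1, c_2, \ldots$ in left-to-right order.

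The only step that needs real care is surjectivity. For a Dyck prefix $A$ with $A \geq \overline P$ and the correct $u$/$d$ counts, build $Q$ by the above recipe. By construction $\cl(Q) = \cl(P)$, $\overline Q = A$, and $Q$ has equally many $U$'s and $D$'s; the remaining task is to verify $y_Q(i) \geq 0$ for all $i$. The hypothesis $A \geq \overline P$ translates to $d_Q(i) \leq d_P(i)$, so by monotonicity of $\nu$,
\[
y_Q(i) = \nu\bigl(i - d_Q(i)\bigr) - d_Q(i) \;\geq\; \nu\bigl(i - d_P(i)\bigr) - d_P(i) = y_P(i) \;\geq\; 0,
\]
where the last inequality uses that $P$ is Motzkin. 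This positivity check is the step I expect to be the main obstacle; once the two height formulas are in hand, the same monotonicity delivers both $Q \geq_S P$ and $y_Q(i) \geq 0$ in one line.
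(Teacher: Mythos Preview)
Your proof is correct. The forward direction and injectivity match the paper almost verbatim, but your surjectivity argument differs from the paper's. The paper argues that any admissible Dyck prefix $A\geq\overline P$ can be reached from $\overline P$ by a sequence of local flips $du\to ud$, and that the corresponding flips $DU\to UD$ and $DE\to ED$ on $P$ preserve the class and keep the path Motzkin, thereby producing the required $Q$. You instead build $Q$ in one shot from the class and $A$, and verify nonnegativity directly via the height formula $y_Q(i)=\nu(i-d_Q(i))-d_Q(i)$ together with the inequality $d_Q(i)\leq d_P(i)$. Your approach is more self-contained, since it does not rely on the (standard but unproved in the paper) fact that every $A\geq\overline P$ with the same $u/d$ counts is reachable from $\overline P$ by $du\to ud$ flips; it also makes transparent that the single condition $d_Q(i)\leq d_P(i)$ simultaneously encodes $\overline Q\geq\overline P$, $Q\geq_S P$, and $y_Q(i)\geq 0$. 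The paper's flip argument, on the other hand, is more pictorial and explains the covering relations of $\leq_C$.
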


\begin{proof}
We begin by showing that if $Q\geq_CP$ then $\overline Q\geq\overline P$. If $Q\geq_CP$ then each prefix $P_1\cdots P_k$ must have at least as many down steps as the corresponding prefix $Q_1\cdots Q_k$, because $P$ and $Q$ have the same class. It follows that $\overline{P_1\cdots P_k}$ also has at least as many $d$ steps as $\overline{Q_1\cdots Q_k}$ for each $k$, which is exactly the condition for $\overline P\leq \overline Q$.

The map $Q\mapsto\overline Q$ is injective because all paths $Q\geq_CP$ share the same class, and a Motzkin path is uniquely determined by its class and support.
To see surjectivity, note that we can get every such Dyck prefix $A\geq\overline P$ from $\overline P$ by an appropriate sequence of flips that change the substring $du$ to $ud$. Applying the corresponding flips to $P$ gives a Motzkin path $Q\geq_CP$ because both the flips $DU\to UD$ and $DE\to ED$ preserve the class of the Motzkin path and make it higher, and it is clear that this path $Q$ has $\overline Q=A$.
\end{proof}

We now have enough information to construct our bijection from $\Int(\M_n^C)$ to $\N_n$.

\begin{theorem}
There is a bijection $\varphi_n:\Int(\M_n^C)\to\N_n$ given by $\varphi(P, Q)=(X,Y)$ where
\[
X_i=\left\{\begin{array}{cl}
U&\text{if $\overline{P_i}=d$ and $\overline{Q_i}=u$}
\\D&\text{if $\overline{P_i}=u$ and $\overline{Q_i}=d$}
\\E&\text{if $\overline{P_i}=\overline{Q_i}$}
\end{array}\right.
\]
and $Y=P$. 
\end{theorem}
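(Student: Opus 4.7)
The plan is to show $\varphi_n$ lands in $\N_n$ and then to construct an explicit two-sided inverse. The key conceptual point is that Lemma \ref{lemma:describe larger paths} already encodes an interval $(P,Q) \in \Int(\M_n^C)$ as the pair $(P, \overline Q)$, where $\overline Q$ is a Dyck prefix satisfying $\overline Q \geq \overline P$ and having the same total numbers of $u$'s and $d$'s as $\overline P$. The path $X$ will be nothing more than a Motzkin-style encoding of the pointwise difference between $\overline Q$ and $\overline P$.

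First I would verify that $(X,Y) \in \N_n$. Since $Y = P$ is Motzkin by assumption, only $X$ requires checking. A direct case inspection shows that the six pairs $(X_i, Y_i)$ allowed in $\N_n$ match exactly the possibilities coming from the definition: $X_i = U$ forces $\overline{P_i} = d$, hence $Y_i = P_i = D$, while $X_i = D$ forces $\overline{P_i} = u$, hence $Y_i \in \{U,E\}$; this automatically rules out the forbidden pairs $(U,U)$, $(U,E)$, and $(D,D)$. To see that $X$ is itself a Motzkin path, I would use the identity
\[
\#\{i \leq k : X_i = U\} - \#\{i \leq k : X_i = D\} = \tfrac{1}{2}\bigl(h(\overline Q_1 \cdots \overline Q_k) - h(\overline P_1 \cdots \overline P_k)\bigr),
\]
where $h(\cdot)$ denotes the height of a Dyck prefix (taking $u = +1$, $d = -1$). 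The right-hand side is nonnegative by $\overline Q \geq \overline P$, giving the prefix condition for $X$, and vanishes at $k = n$ because $\overline Q$ and $\overline P$ have matching total counts.

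For injectivity and surjectivity, I would exhibit an inverse. Given $(X,Y) \in \N_n$, set $P := Y$ and define a sequence $A$ by $A_i = u$ if $X_i = U$, or if $X_i = E$ and $\overline{P_i} = u$, and $A_i = d$ otherwise. The same height-difference identity, applied with $A$ in place of $\overline Q$, shows that $A$ is a Dyck prefix dominating $\overline P$ with matching counts. Lemma \ref{lemma:describe larger paths} then produces a unique $Q \geq_C P$ with $\overline Q = A$. Checking that this construction is mutually inverse to $\varphi_n$ amounts to observing that the definition of $A_i$ is just the case-by-case inversion of the rule defining $X_i$.

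The main obstacle is little more than bookkeeping: carefully organizing the case analysis on $(\overline{P_i}, \overline{Q_i})$ and stating the height-difference identity in a way that makes the prefix and count conditions on $X$ transparent. Once this identity is in hand, both well-definedness and the construction of the inverse reduce immediately to the content of Lemma \ref{lemma:describe larger paths}.
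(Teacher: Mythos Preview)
Your proposal is correct and follows essentially the same route as the paper: both verify the forbidden-pair condition by case inspection, both observe that the height of $X$ at step $k$ equals (up to a harmless factor) the height difference $h(\overline Q)-h(\overline P)$, and both construct the same explicit inverse $(X,Y)\mapsto(P,Q)$ with $P=Y$ and $\overline Q$ recovered letter-by-letter from $X$ and $\overline P$, invoking Lemma~\ref{lemma:describe larger paths} to certify that this $\overline Q$ corresponds to a genuine $Q\geq_C P$. Your height-difference identity is stated a bit more precisely than the paper's informal version, but the argument is otherwise the same.
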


\begin{proof}
We begin by showing that this map is well-defined; that is, that its image is in fact in $\N_n$. It is impossible for $(X_i,Y_i)$ to be $(U,U)$, $(U,E)$, or $(D,D)$ by the definition of $\varphi_n$. Moreover we note that the height of $X$ after $i$ steps is the difference between the heights of $\overline Q$ and $\overline P$ after $i$ steps, treating both as lattice paths starting at the same height. Because $\overline P\leq\overline Q$, it follows that $X$ never goes below the horizontal; similarly, $X$ ends on the horizontal because $\overline P$ and $\overline Q$ must end at the same height, having the same numbers of $u$'s and $d$'s. Hence $X$ is a Motzkin path.

To show that $\varphi_n$ is a bijection, we construct an inverse $\phi_n:\mathcal N_n\to\Int(\M_n^C)$ and prove that $\varphi_n\circ\phi_n$ and $\phi_n\circ\varphi_n$ are both identities. Let $\phi_n(X,Y)=(P,Q)$ where $P=Y$ and $Q$ is the Motzkin path with $Q\geq_C P$ and support 
\[
\overline{Q_i}=\left\{\begin{array}{cl}
u&\text{if }X_i=U
\\d&\text{if }X_i=D
\\\overline{Y_i}&\text{if }X_i=E
\end{array}\right.
.\]
To show that $\phi_n$ is well-defined, we must show that $\overline{Q}\geq\overline P$ and that $\overline Q$ and $\overline P$ have the same numbers of $u$'s and $d$'s, as required by Lemma \ref{lemma:describe larger paths}. We can pair up the $U$'s and $D$'s in $X$ so that the $D$ in each pair occurs after the $U$ --- one way to do this is to convert $U$'s and $D$'s into \texttt{(}'s and \texttt{)}'s and then take a balanced parenthesis matching. We start with the Dyck prefix $\overline P=\overline Y$ and for each of these pairs $(X_i,X_j)=(U,D)$, we change the $i^\text{th}$ term of the Dyck prefix to $u$ and the $j^\text{th}$ term to $d$ (note that these terms were originally $d$ and $u$, respectively, because $(X,Y)\in\mathcal N_n$). Every such operation gives a Dyck prefix with the same numbers of $u$'s and $d$'s that is greater than the previous Dyck prefix, and after all these operations are performed, we get the desired support $\overline{Q}$. Hence the condition of Lemma \ref{lemma:describe larger paths} is satisfied, and $\phi_n$ is well-defined.

We now show that $\varphi_n\circ\phi_n$ and $\phi_n\circ\varphi_n$ are both identities, starting with $\varphi_n\circ\phi_n$. Fix $(X,Y)\in\N_n$ and let $(Y, Q)=\phi_n(X,Y)$ and $(X',Y)=\varphi_n\circ\phi_n(X,Y)$. We show that $X'_i=X_i$ by casework. If $X_i=U$ then $\overline{Q_i}=u$ and $(X,Y)\in\N_n$ implies that $(X_i,Y_i)$ is not $(U,U)$ or $(U,E)$, so $\overline{Y_i}=d$. Hence $X'_i=U$ if $X_i=U$. Similarly, if $X_i=D$ then $\overline{Q_i}=d$ and because $(X,Y)\in\N_n$, we have $(X_i,Y_i)\neq(D,D)$, so $\overline{Y_i}=u$. It follows that $X'_i=D$ if $X_i=D$. In the case that $X_i=E$, we have $\overline{Q_i}=\overline{Y_i}$, so $X'_i=E$. Therefore, $X'=X$, and it follows that $\varphi\circ\varphi'$ is the identity.

Lastly, we consider $\phi_n\circ\varphi_n$. Fix $(P,Q)\in\M_n^C$ and let $(X,P)=\varphi_n(P,Q)$ and $(P,Q'')=\phi_n\circ\varphi_n(P,Q)$. We claim that $\overline{Q'}=\overline{Q}$, by casework on each step $X_i$. If $X_i=U$ then $\overline{Q_i}=u$ and $\overline{Q'_i}=u$; similarly if $X_i=D$ then $\overline{Q_i}=d$ and $\overline{Q'_i}=d$. If $X_i=E$ then $\overline{Q_i}=\overline{P_i}$ and $\overline{Q'_i}=\overline{P_i}$. Hence, $\overline Q=\overline{Q'}$. Because $Q$ and $Q'$ have the same support and both have the same class as $P$, it follows that $Q=Q'$. Hence $\phi_n\circ\varphi_n$ is the identity.
\end{proof}

Combining $\varphi_{n-1}$ with $\widehat\LL_n$ completes the proof of Defant's conjecture.

\begin{cor}\label{cor:count with w}
We have $|\VHC(\Av_n(312))|=\sum_{k=0}^{n-1}\binom{n-1}k\mathfrak w(k)$.
\end{cor}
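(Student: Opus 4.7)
The plan is to chain together the bijections and counting results already established in this section, since Corollary \ref{cor:count with w} is designed to fall out immediately from them. Specifically, three ingredients are available: Defant's bijection $\widehat\LL_n \colon \VHC(\Av_n(312)) \to \Int(\M_{n-1}^C)$ cited at the end of Section \ref{section:Motzkin defs}, the bijection $\varphi_{n-1} \colon \Int(\M_{n-1}^C) \to \N_{n-1}$ constructed in the preceding theorem, and the cardinality formula $|\N_m| = \sum_{k=0}^{m} \binom{m}{k} \w(k)$ from Proposition \ref{prop:size of nn}.

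The proof is then essentially a one-line computation. I would simply observe that the composition $\varphi_{n-1} \circ \widehat\LL_n$ is a bijection from $\VHC(\Av_n(312))$ to $\N_{n-1}$, and conclude
\[
|\VHC(\Av_n(312))| \;=\; |\Int(\M_{n-1}^C)| \;=\; |\N_{n-1}| \;=\; \sum_{k=0}^{n-1} \binom{n-1}{k} \w(k),
\]
where the three equalities use $\widehat\LL_n$, $\varphi_{n-1}$, and Proposition \ref{prop:size of nn} applied with $m = n-1$, respectively.

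There is no real obstacle here; the corollary is a bookkeeping step whose content lies entirely in the earlier construction of $\varphi_n$ and in the invocation of Defant's $\widehat\LL_n$. The only thing to double-check is that the indexing conventions match: Defant's map lands in $\Int(\M_{n-1}^C)$ (not $\Int(\M_n^C)$), so the appropriate instance of $\varphi$ to compose with is $\varphi_{n-1}$, and correspondingly the sum runs to $n-1$ rather than $n$. Once this is noted, the proof can be stated in two or three sentences.
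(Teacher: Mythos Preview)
Your proposal is correct and matches the paper's proof essentially verbatim: the paper composes $\widehat\LL_n$ with $\varphi_{n-1}$ to conclude $|\VHC(\Av_n(312))|=|\N_{n-1}|$ and then invokes Proposition~\ref{prop:size of nn}. Your attention to the indexing (that $\widehat\LL_n$ lands in $\Int(\M_{n-1}^C)$, so one uses $\varphi_{n-1}$ and the sum runs to $n-1$) is exactly the only thing that needs checking.
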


\begin{proof}
Composing the bijections $\widehat\LL_{n}:\VHC(\Av_n(312))\to\Int(\M_{n-1}^C)$ and $\varphi_{n-1}:\Int(\M_{n-1}^C)\to\N_{n-1}$ shows that $|\VHC(\Av_n(312))|=|\N_{n-1}|$. Proposition \ref{prop:size of nn} completes the proof.
\end{proof}

We can now use the results of Bostan, Raschel, and Salvy in \cite{brs} to analyze the generating function and asymptotics of the sequence $|\VHC(\Av_n(312))|$.

\begin{prop}\label{prop:not D finite}
The generating function $G(x)=\sum_{n=1}^\infty|\VHC(\Av_n(312))|x^n$ is not $D$-finite.
\end{prop}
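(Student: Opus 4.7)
The plan is to leverage the non-$D$-finiteness of $F(x) = \sum_{k\geq 0} \w(k) x^k$ proved in \cite{brs} by expressing $G$ and $F$ in terms of each other via a rational substitution, and then invoking the closure of the class of $D$-finite power series under such operations.

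First, I would use Corollary \ref{cor:count with w} to unfold the binomial transform at the level of generating functions. Writing $a_n := |\VHC(\Av_n(312))|$ and shifting the index by $1$, the identity $a_{n+1} = \sum_{k=0}^n \binom{n}{k} \w(k)$ is the classical binomial transform of $\w(k)$, so a standard computation yields
\[
G(x) = \sum_{n \geq 1} a_n x^n = \frac{x}{1-x}\, F\!\left(\frac{x}{1-x}\right).
\]
I would verify this identity directly by expanding $F(x/(1-x))$ as a formal power series and matching coefficients.

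Next, I would invert this substitution. Setting $y = x/(1-x)$ gives $x = y/(1+y)$, which is a rational function with $0$ constant term, and a quick calculation shows
\[
F(y) = \frac{1}{y}\, G\!\left(\frac{y}{1+y}\right).
\]
Now I would apply the standard closure properties of $D$-finite power series: if $H(x)$ is $D$-finite and $r(y)$ is an algebraic function with $r(0) = 0$, then $H(r(y))$ is $D$-finite; and $D$-finiteness is preserved by multiplication by rational (in fact algebraic) functions as long as the product remains a well-defined formal power series. Since $G\!\left(y/(1+y)\right)$ has zero constant term (because $G$ does), dividing by $y$ yields an honest formal power series. Thus if $G$ were $D$-finite, the displayed formula would force $F$ to be $D$-finite as well.

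Since Bostan, Raschel, and Salvy proved in \cite{brs} that $F$ is not $D$-finite, this contradiction shows that $G$ is not $D$-finite either. The main subtlety to handle carefully will be citing the exact closure property needed (composition of a $D$-finite function with an algebraic function vanishing at the origin preserves $D$-finiteness), but this is classical and I would simply quote it from a standard reference on $D$-finite series.
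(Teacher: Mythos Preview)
Your proposal is correct and follows essentially the same approach as the paper: both express $G(x)=\frac{x}{1-x}F\!\left(\frac{x}{1-x}\right)$ via the binomial transform from Corollary~\ref{cor:count with w}, invert the rational substitution to write $F(y)=\frac{1}{y}G\!\left(\frac{y}{1+y}\right)$, and then invoke the closure of $D$-finite series under algebraic substitution and multiplication (the paper cites Theorem~6.4.10 of \cite{Enumerative Combinatorics}) to derive a contradiction from the non-$D$-finiteness of $F$ established in \cite{brs}.
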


\begin{proof}
Let $F(x)$ be the generating function $\sum_{k=0}^\infty\w(x)x^k$ which Bostan, Rachel, and Salvy showed to be non-$D$-finite. Because $\frac{G(x)}x$ is the binomial transform of $F(x)$, we can write $\frac{G(x)}x=\frac{1}{1-x}F\left(\frac{x}{1-x}\right)$. Setting $y=\frac{x}{1-x}$ yields $F(y)=\frac 1yG\left(\frac y{1+y}\right)$. Theorem 6.4.10 of \cite{Enumerative Combinatorics} states that the composition $G\circ h$ of a $D$-finite function $G$ and algebraic function $h$ is also $D$-finite, and the product of $D$-finite functions is also $D$-finite. Hence, if $G$ were $D$-finite then $F(y)=\frac 1yG\left(\frac y{1+y}\right)$ would also be $D$-finite, which is a contradiction.
\end{proof}

\begin{theorem}\label{thm:312 asymtotics}
Let $\alpha\approx 4.515$ and $\beta\approx 4.729$ be the values for which $\w(k)=\Theta\left(\frac{\beta^k}{k^\alpha}\right)$, as determined in \cite{brs}. Then $|\VHC(\Av_n(312))|=\Theta\left(\frac{(\beta+1)^n}{n^\alpha}\right)\approx\Theta\left(\frac{5.729^n}{n^{4.515}}\right)$.
\end{theorem}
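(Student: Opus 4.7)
By Corollary \ref{cor:count with w} we have $|\VHC(\Av_n(312))|=\sum_{k=0}^{n-1}\binom{n-1}{k}\w(k)$. My plan is to compute the asymptotics of this binomial transform directly from the given estimate $\w(k)=\Theta(\beta^k/k^\alpha)$. Since $\w(0)=O(1)$ contributes only a bounded amount, it suffices to show that $S_n:=\sum_{k=1}^{n-1}\binom{n-1}{k}\beta^k k^{-\alpha}=\Theta((1+\beta)^n/n^\alpha)$. A convenient way to organize the estimate is to factor out $(1+\beta)^{n-1}$ and view the remaining sum as $\mathbb{E}[K^{-\alpha}\mathbf{1}[K\geq 1]]$, where $K$ is binomial with parameters $n-1$ and $p=\beta/(1+\beta)$. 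The intuition is that $K$ concentrates around its mean $(n-1)p=\Theta(n)$, and therefore $\mathbb{E}[K^{-\alpha}]$ is of order $n^{-\alpha}$, producing the claimed $\Theta((1+\beta)^n/n^\alpha)$.

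For the upper bound, I would split the sum at $k=\lfloor cn\rfloor$ for some small constant $c<p$. On the range $k\geq cn$ I use the trivial bound $k^{-\alpha}\leq (cn)^{-\alpha}$ and sum the remaining binomial terms against $(1+\beta)^{n-1}$ to get a contribution of at most $(1+\beta)^{n-1}(cn)^{-\alpha}$. On the range $1\leq k<cn$ I bound $k^{-\alpha}\leq 1$ and apply a Chernoff/Hoeffding tail bound: since $c<p$, there is a constant $\rho<1$ with $\sum_{k<cn}\binom{n-1}{k}\beta^k\leq \rho^{n}(1+\beta)^{n-1}$, so this contribution is exponentially smaller than $(1+\beta)^n/n^\alpha$ and absorbed into the main term.

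For the lower bound, I fix $c_1,c_2$ with $0<c_1<p<c_2<1$ and again invoke concentration: the variance of $K$ is $(n-1)p(1-p)=O(n)$, so Chebyshev's inequality gives $\Pr(c_1 n\leq K\leq c_2 n)\geq \tfrac{1}{2}$ for $n$ large enough. This translates to $\sum_{k\in[c_1n,c_2n]}\binom{n-1}{k}\beta^k\geq\tfrac{1}{2}(1+\beta)^{n-1}$, and on this range $k^{-\alpha}\geq (c_2 n)^{-\alpha}$, giving $S_n\geq \tfrac{1}{2}(c_2 n)^{-\alpha}(1+\beta)^{n-1}$. Combining both bounds and noting $(1+\beta)^{n-1}=\Theta((1+\beta)^n)$ yields $|\VHC(\Av_n(312))|=\Theta((1+\beta)^n/n^\alpha)$, and plugging in $\beta\approx 4.729$ gives the numerical form $\Theta(5.729^n/n^{4.515})$. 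I do not foresee a serious obstacle: the whole argument is a routine application of binomial concentration around the mean, with the only delicate point being the choice of the splitting constant $c$ strictly below $p$ so that the tail bound kicks in.
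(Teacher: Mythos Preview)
Your argument is correct and follows essentially the same strategy as the paper: reduce to Corollary~\ref{cor:count with w}, factor out $(1+\beta)^{n-1}$, interpret the remaining sum as an expectation with respect to a $\mathrm{Bin}(n-1,\beta/(1+\beta))$ variable, and use concentration about the mean to show this expectation is $\Theta(n^{-\alpha})$. The paper packages the concentration step as Lemma~\ref{lemma:alphas and betas}, which uses a shrinking window $\delta=n^{-1/3}$ and Chernoff on both sides to obtain the sharper asymptotic $\sum_{k}\binom{n}{k}\beta^k k^{-\alpha}\sim (1+\beta)^n\bigl(\tfrac{\beta n}{1+\beta}\bigr)^{-\alpha}$; your version with fixed splitting constants and Chebyshev for the lower bound yields only $\Theta$, but that is all the theorem claims, so the simplification is harmless.
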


The proof of this theorem relies on the following lemma.

\begin{lemma}\label{lemma:alphas and betas}
Fix $\alpha\geq 0$ and $\beta\geq 1$. As $n\to\infty$, we have
\[\sum_{k=0}^n\binom nk\frac{\beta^k}{k^\alpha}\sim(\beta+1)^n\left(\frac{\beta n}{\beta+1}\right)^{-\alpha}.\qedhere\]
\end{lemma}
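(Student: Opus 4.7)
The plan is to read the sum as an expectation against a binomial distribution and then exploit concentration. Setting $p = \beta/(\beta+1)$ and $q = 1/(\beta+1)$, we rewrite
\[
\sum_{k=0}^n \binom{n}{k}\frac{\beta^k}{k^\alpha} \;=\; (\beta+1)^n \sum_{k=0}^n \binom{n}{k} p^k q^{n-k}\cdot k^{-\alpha} \;=\; (\beta+1)^n\,\E\!\left[K^{-\alpha}\right],
\]
where $K\sim\mathrm{Binomial}(n,p)$; for $\alpha>0$ the $k=0$ summand has weight $\Pr(K=0)=q^n$ and is treated as $0$ by convention, which is harmless since $q^n$ is exponentially small. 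The mean is $np=n\beta/(\beta+1)$, so the target asymptotic reduces to showing $\E[K^{-\alpha}]\sim (np)^{-\alpha}$.

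Next I would invoke concentration of the binomial around its mean. A Chernoff (or Hoeffding) inequality yields $\Pr(|K-np|\geq n^{2/3})\leq 2e^{-cn^{1/3}}$ for some constant $c=c(\beta)>0$. On the complementary event $\{|K-np|<n^{2/3}\}$, we have $K = np\bigl(1+O(n^{-1/3})\bigr)$, and hence uniformly
\[
K^{-\alpha} = (np)^{-\alpha}\bigl(1+O(n^{-1/3})\bigr).
\]
Integrating this uniform estimate against the probability mass on the concentration event gives a contribution of $(np)^{-\alpha}(1+o(1))$ to $\E[K^{-\alpha}]$.

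For the tail, since $\alpha\geq 0$ one has $K^{-\alpha}\leq 1$ whenever $K\geq 1$, so the complementary contribution is bounded by $\Pr(|K-np|\geq n^{2/3})+\Pr(K=0)$, both of which are exponentially small in $n$; in particular they are $o((np)^{-\alpha})$ because $(np)^{-\alpha}$ decays only polynomially. Adding the two estimates yields $\E[K^{-\alpha}]\sim (np)^{-\alpha}$, and multiplying by $(\beta+1)^n$ gives the lemma.

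The main obstacle is not conceptual but bookkeeping: one needs the uniform expansion $K^{-\alpha}=(np)^{-\alpha}(1+O(n^{-1/3}))$ to hold with an error that does not depend on $K$ on the concentration event, and one needs to ensure the $k=0$ term (together with any other small-$K$ terms where $K^{-\alpha}$ blows up) is absorbed in the tail bound. The hypothesis $\beta\geq 1$ ensures $p\geq 1/2$, so $K$ is well away from $0$ with overwhelming probability and no further tricks are required.
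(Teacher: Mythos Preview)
Your proposal is correct and follows essentially the same route as the paper: rewrite the sum as $(\beta+1)^n$ times a binomial expectation, apply a Chernoff-type bound with window of order $n^{2/3}$ around the mean $np=\beta n/(\beta+1)$, use $K^{-\alpha}=(np)^{-\alpha}(1+o(1))$ uniformly on the concentration event, and bound the tail by $K^{-\alpha}\le 1$ plus the exponentially small $\Pr(K=0)$. The paper carries out the upper and lower bounds separately rather than via your single uniform expansion, but the argument is otherwise identical.
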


\begin{proof}
We show that almost all of the sum comes from values of $k$ close to $\frac{\beta n}{\beta+1}$. Let $X=\sum_{i=1}^nX_i$ be the sum of $n$ independent Bernoulli variables, with $X_i=1$ with probability $\frac{\beta}{\beta+1}$ for each $i$. Let $\mu=\E[X]=\frac{\beta n}{\beta+1}$ and let $\delta=n^{-1/3}$. A Chernoff bound says that
\[
\Pr[|X-\mu|\geq\delta\mu]\leq 2e^{-\mu\delta^2/3}.
\]
This expands to
\[
\sum_{\substack{k<(1-\delta)\mu\\\text{or }k>(1+\delta)\mu}}\binom nk\frac{\beta^k}{(\beta+1)^n}\leq 2\exp\left(-\frac{\beta n^{1/3}}{3(\beta+1)}\right).
\]

We can use this Chernoff bound to bound the original sum. The original sum splits as
\[
\sum_{k=0}^n\binom nk\frac{\beta^k}{k^\alpha}=\sum_{|k-\mu|\leq\delta\mu}\binom nk\frac{\beta^k}{k^\alpha}+\sum_{\substack{k<(1-\delta)\mu\\\text{or }k>(1+\delta)\mu}}\binom nk\frac{\beta^k}{k^\alpha}
\]
and we upper-bound the two terms separately. We have
\begin{align*}
\sum_{|k-\mu|\leq\delta\mu}\binom nk\frac{\beta^k}{k^\alpha}&\leq((1-\delta)\mu)^{-\alpha}\sum_{|k-\mu|\leq\delta\mu}\binom nk\beta^k\leq((1-\delta)\mu)^{-\alpha}\sum_{k=0}^n\binom nk\beta^k
\\&=((1-\delta)\mu)^{-\alpha}(\beta+1)^n\sim\mu^{-\alpha}(\beta+1)^n,\text{ \ and}
\\
\sum_{\substack{k<(1-\delta)\mu\\\text{or }k>(1+\delta)\mu}}\binom nk\frac{\beta^k}{k^\alpha}&\leq\sum_{\substack{k<(1-\delta)\mu\\\text{or }k>(1+\delta)\mu}}\binom nk\beta^k\leq 2(\beta+1)^n\exp\left(-\frac{\beta n^{1/3}}{3(\beta+1)}\right).
\end{align*}
However, $\mu^{-\alpha}=e^{-\Theta(\ln n)}\gg e^{-\Theta(n^{1/3})}$, so the upper bound is dominated by the first term. Hence,
\[
\sum_{k=0}^n\binom nk\frac{\beta^k}{k^\alpha}\lesssim(\beta+1)^n\mu^{-\alpha}.
\]

For the lower bound, we consider only the first term. Using the Chernoff bound again, we have
\begin{align*}
\sum_{k=0}^n\binom nk\frac{\beta^k}{k^\alpha}
&\geq\sum_{|k-\mu|\leq\delta\mu}\binom nk\frac{\beta^k}{k^\alpha}\geq((1+\delta)\mu)^{-\alpha}\sum_{|k-\mu|\leq\delta\mu}\binom nk\beta^k
\\&\geq((1+\delta)\mu)^{-\alpha}\left(1-2\exp\left(-\frac{\beta n^{1/3}}{3(\beta+1)}\right)\right)(\beta+1)^n
\sim(\beta+1)^n\mu^{-\alpha}.
\end{align*}
Therefore,
\[\sum_{k=0}^n\binom nk\frac{\beta^k}{k^\alpha}\sim(\beta+1)^n\mu^{-\alpha}=(\beta+1)^n\left(\frac{\beta n}{\beta+1}\right)^{-\alpha},\]
completing the proof.
\end{proof}

We can now compute the asymtotics of $\VHC(\Av_n(312))$. 

\begin{proof}[Proof of Theorem \ref{thm:312 asymtotics}]
Let $c$ be the constant such that $\w(k)\sim c\frac{\beta^k}{k^\alpha}$. Because $\w(k)$ goes to $\infty$ as $k$ goes to $\infty$, we have that $\sum_{k=0}^{n-1}\binom {n-1}k\w(k)\sim c\sum_{k=0}^{n-1}\binom {n-1}k\frac{\beta^k}{k^\alpha}$ as $n$ goes to $\infty$. Combining this with Corollary \ref{cor:count with w} and Lemma \ref{lemma:alphas and betas} yields that $|\VHC(\Av_n(312))|\sim c(\beta+1)^{n-1}\left(\frac{\beta (n-1)}{\beta+1}\right)^{-\alpha}=\Theta\left(\frac{(\beta+1)^n}{n^\alpha}\right).$
\end{proof}

\section{Sliding operators}\label{section:swl}

The focus of the next two sections will be constructing an injection \[\W_n:\VHC(\Av_n(132))\to \VHC(\Av_n(312))\] such that its composition with Defant's bijection $\widehat{\LL}_n:\VHC(\Av_n(312))\to\Int(\M_{n-1}^C)$ is surjective onto intervals of the subposet $\M_{n-1}^T$. To compute $\varphi(\pi, V)$, we will first define the image permutation as $\swl(\pi)$ (defined below) and then use $V$ to find an image VHC on $\swl(\pi)$.

We start by defining the sliding operators $\swl$ and $\swr$. We define $\swl_i$ to take all points southwest of the point with height $i$ and slide them left of all points northwest of the point with height $i$ --- hence the name ``southwest left.''

\begin{figure}[h]\begin{center}
\begin{tikzpicture}[scale=.5]
\coordinate (s) at (17,0);
\coordinate (i1) at (7,5);
\coordinate (i2) at ($ (i1) + (s) + (.5,0)$);
\fill[red!30] (0,0) rectangle (i1);
\fill[blue!30] (0,10) rectangle (i1);
\fill[red!30] (s) rectangle ($(s) + (4,5)$);
\fill[blue!30] ($(s) + (4,10)$) rectangle (i2);
\foreach \y [count = \x] in {3,7,9,4,8,1,5,6,2}
	\fill (\x, \y) circle (1.2mm) node [above right] {\y};
\foreach \y [count = \x] in {3,4,1}
	\fill ($ (\x, \y) + (s) $) circle (1.2mm) node [above right] {\y};
\foreach \y [count = \x] in {7,9,8,5,6,2}
	\fill ($ (\x+3.5, \y) + (s) $) circle (1.2mm) node [above right] {\y};
\draw (i1) circle (2.5mm);
\draw (i2) circle (2.5mm);
\draw ($ .5*(s) + (4.5,5)$) node {\large $\longrightarrow$};
\end{tikzpicture}
\caption{Action of $\swl_5$ on the permutation $368472519$.}
\end{center}\end{figure}
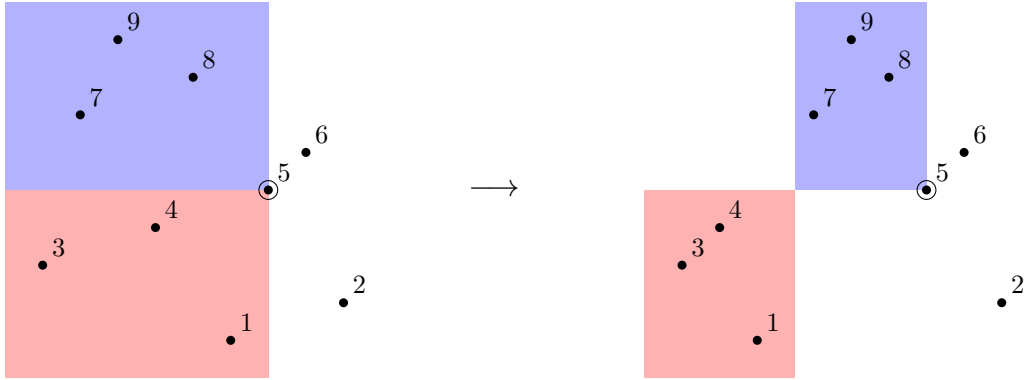

More formally, let $\pi\in S_n$ and let $(m,\pi_m)$ be the point of height $i$. Let $B$ and $T$ be the subsequences of $\pi_1\cdots\pi_{m-1}$ that consist of points $(j,\pi_j)$ with $\pi_j<i$ or $\pi_j>i$, respectively, and let $\pi_{\geq m}=\pi_m\pi_{m+1}\cdots\pi_n$. Then $\swl_i(\pi)$ is the concatenation $BT\pi_{\geq m}$. We define $\swr_i(\pi)$ similarly, except this time moving the ``southwest right.'' That is, $\swr_i(\pi)$ is the concatenation $TB\pi_{\geq m}$. Additionally, if $\beta=(m,\pi_m)$ is a point of $\pi$, we define $\swl_\beta(\pi)=\swr_{\pi_m}(\pi)$ and $\swr_\beta(\pi)=\swr_{\pi_m}(\pi)$.

This allows us to define the sliding operators $\swl$ and $\swr$.

\begin{defn}
Let $\pi\in S_n$. Then we define $\swl(\pi)=\swl_1\circ\cdots\circ\swl_n(\pi)$ and $\swr(\pi)=\swr_1\circ\cdots\circ\swr_n(\pi)$. 
\end{defn}

Defant \cite{hannaspaper} showed that $\swl:\Av_n(132)\to\Av_n(312)$ and $\swr:\Av_n(312)\to\Av_n(132)$ are inverse bijections. He additionally showed that $\swl$ gives an injection from uniquely sorted permutations (permutations in $S_n$ with a unique valid hook configuration and $\frac{n-1}2$ descents) avoiding 132 to uniquely sorted permutations avoiding 312, and that the composition $\widehat{\LL}_n\circ\swl$ gives a bijection from uniquely sorted permutations avoiding 132 to intervals in a subposet of $\M_{n-1}^T$. In the next section, we will generalize $\swl$ to an injection $\W_n:\VHC(\Av_n(132))\to\VHC(\Av_n(312))$. For convenience, we will only consider $\swl$ and $\swr$ to be defined on permutations avoiding 132 or 312, respectively.

The remainder of this section builds up results about the sliding operators that will be useful when studying $\W_n$. If $i<j$ are indices of a permutation, we say that $\pi_i$ and $\pi_j$ form an \emph{acclivity} if $\pi_i<\pi_j$ and a \emph{declivity} if $\pi_i>\pi_j$.

We describe when $\swr$ switches two points in the plot of $\pi$. In fact, $\swr$ changes every occurrence of the pattern 132 in $\pi$ into 312, and does no further transformations.

\begin{lemma}\label{lemma:swr properties}
Let $\pi$ avoid 312 and let $i<j$ be indices of $\pi$.
\begin{enumerate}[label=(\roman*)]
\item If $\pi_i>\pi_j$ then $\pi_i$ comes before $\pi_j$ in $\swr(\pi)$. That is, $\swr$ preserves declivities.
\item If $\pi_i<\pi_j$ then $\pi_i$ comes after $\pi_j$ in $\swr(\pi)$ if and only if there is some $k>j$ for which $(\pi_i,\pi_j,\pi_k)$ matches the pattern 132.
\end{enumerate}
\end{lemma}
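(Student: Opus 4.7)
My plan is to handle part~(i) through a local analysis of what each step $\swr_h$ does, then leverage the same analysis in an induction on $\pi_j-\pi_i$ to prove part~(ii). Throughout, let $\sigma^{(h+1)}$ denote the permutation just before step $\swr_h$ is applied, so $\sigma^{(n+1)}=\pi$ and $\sigma^{(1)}=\swr(\pi)$, and let $m_h$ be the position of value $h$ in $\sigma^{(h+1)}$. The step $\swr_h$ rearranges the prefix $\sigma^{(h+1)}_1\cdots\sigma^{(h+1)}_{m_h-1}$ by placing all values exceeding $h$ (in their relative order) before all values less than $h$ (in their relative order), leaving the suffix starting at position $m_h$ untouched.

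For part~(i), a direct case analysis shows that $\swr_h$ changes the relative order of two values $u<v$ only when $u<h<v$ and both lie in positions $1,\ldots,m_h-1$ of $\sigma^{(h+1)}$, and only from the ``$u$ before $v$'' configuration to the ``$v$ before $u$'' configuration. In particular, once $v$ precedes $u$ for some pair $u<v$, this configuration persists through all later steps. A declivity $\pi_i>\pi_j$ with $i<j$ is already in this configuration in $\pi$, so $\pi_i$ remains before $\pi_j$ throughout.

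For part~(ii), let $a=\pi_i$ and $b=\pi_j$ with $a<b$. I will use two ingredients: (A) from the analysis above, once $b$ precedes $a$ at some stage the configuration persists through all later steps; and (B) an invariant, proved by descending induction on $h$, that in $\sigma^{(h+1)}$ the values in $\{1,\ldots,h\}$ appear in the same relative order as in $\pi$ (at each step $\swr_{h'}$ the values below $h'$ either sit in $B_{h'}$ with preserved order or in the unaffected suffix). For the $(\Leftarrow)$ direction, pick $k>j$ with $a<\pi_k<b$ and set $h=\pi_k$. Invariant~(B) places $a$ before $h$ in $\sigma^{(h+1)}$. Meanwhile, $(\pi_j,\pi_k)=(b,h)$ is a declivity of $\pi$, so by the analysis of part~(i) the value $b$ precedes $h$ at every intermediate stage, including $\sigma^{(h+1)}$. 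Thus $a$ and $b$ both sit in positions $1,\ldots,m_h-1$ of $\sigma^{(h+1)}$, and since $a<h<b$, step $\swr_h$ leaves $b$ before $a$; fact~(A) then propagates this to $\swr(\pi)$.

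For the $(\Rightarrow)$ direction, I induct on $b-a$. The base case $b-a=1$ is vacuous, since no integer $h$ satisfies $a<h<b$. For the inductive step, let $h^*$ be the step at which $a,b$ first reverse order, and set $k^*=\pi^{-1}(h^*)$. Then $a<h^*<b$, both $a$ and $b$ lie in positions $1,\ldots,m_{h^*}-1$ of $\sigma^{(h^*+1)}$, and $b$ precedes $h^*$ there. Invariant~(B) gives $i<k^*$, and $k^*\ne j$ since $\pi_{k^*}=h^*\ne b=\pi_j$. If $k^*>j$, set $k=k^*$ and we are done. Otherwise $k^*<j$, making $(\pi_{k^*},\pi_j)=(h^*,b)$ an acclivity of $\pi$ with strictly smaller span $b-h^*$. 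Fact~(A) applied to the reversal already achieved in $\sigma^{(h^*+1)}$ shows this acclivity remains reversed in $\swr(\pi)$, so the inductive hypothesis produces $k'>j$ with $h^*<\pi_{k'}<b$, and the chain $a<h^*<\pi_{k'}<b$ supplies the desired index $k=k'$. The main obstacle here is that the intermediate states $\sigma^{(h+1)}$ are hard to control for values above $h$, but the induction on span sidesteps the need to describe $\sigma^{(h^*+1)}$ in full, reducing instead to a pair whose required witness is guaranteed by a simpler instance of the lemma.
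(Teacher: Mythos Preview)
Your proof is correct and follows essentially the same approach as the paper's: part~(i) by noting each $\swr_h$ can only turn acclivities into declivities; the $(\Leftarrow)$ direction of~(ii) by showing the step $\swr_{\pi_k}$ forces the swap; and the $(\Rightarrow)$ direction by induction on $\pi_j-\pi_i$, locating the first swapping step $h^*$ and either taking $k=\pi^{-1}(h^*)$ directly or applying the inductive hypothesis to the pair $(\pi_{k^*},\pi_j)$. The one notable difference is that you make explicit the invariant~(B) (that values in $\{1,\ldots,h\}$ retain their $\pi$-order in $\sigma^{(h+1)}$) and use it both to place $a$ left of the pivot in $(\Leftarrow)$ and to rule out $k^*<i$ in $(\Rightarrow)$; the paper handles these points instead by a small case split and a direct declivity argument, respectively, without isolating~(B) as a lemma.
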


\begin{proof}
(i) is clear because each $\swr_h$ preserves declivities, so $\swr$ must also preserve declivities. As a corollary, we note that it is impossible for $\pi_i$ and $\pi_j$ to be switched more than once by the sequence of maps $\swr_1\circ\cdots\circ\swr_n(\pi)$.

For (ii) we first assume there is such a $k$ and show that $\pi_i$ and $\pi_j$ are switched by $\swr$. Set $\tau=\swr_{\pi_k+1}\circ\cdots\circ\swr_n(\pi)$. No map in the sequence $\swr_{\pi_k+1}\circ\cdots\circ\swr_n$ will switch $\pi_j$ and $\pi_k$ because they form a declivity. Consider the relative positions of $\pi_i$ and $\pi_j$ in $\tau$. If $\pi_i$ comes after $\pi_j$ in $\tau$ then they form a declivity that will be preserved under all subsequent maps in $\swr_1\circ\cdots\circ\swr_{\pi_k}$. Alternatively, if $\pi_i$ comes before $\pi_j$ in $\tau$ then $\swr_{\pi_k}$ will switch $\pi_i$ and $\pi_j$ and all subsequent maps in $\swr_1\circ\cdots\circ\swr_{\pi_k-1}$ will again preserve the switch.

To finish (ii), we use induction on $\pi_j-\pi_i$ to show that if  $\pi_i$ and $\pi_j$ are switched by $\swr$ then there exists $k$ as described. If $\pi_j-\pi_i=1$ then $\pi_i$ and $\pi_j$ cannot be switched by any map in the composition sequence $\swr_1\circ\cdots\circ\swr_n$. For larger $\pi_j-\pi_i$, assume that $\swr_{\pi_h}$ switches $\pi_i$ and $\pi_j$. It immediately follows that $\pi_i<\pi_k<\pi_h$. We cannot have $h<i$ or else the declivity $(\pi_h,\pi_i)$ would remain in $\swr_{\pi_k+1}\circ\cdots\circ\swr_n(\pi)$. If $h>j$, we can take $k=h$. If $i<h<j$ then $\pi_h$ must be moved to the right of $\pi_j$ by some map in $\swr_{\pi_h+1}\circ\cdots\circ\swr_n$. By the inductive hypothesis, there is $k>j$ so that $(\pi_h, \pi_j, \pi_k)$ matches 132, and thus $(\pi_i,\pi_j,\pi_k)$ will also match 132.
\end{proof}

\begin{cor}\label{cor:swr properties}
Let $\pi\in S_n$ avoid 312 and let $i,j\in[n]$. Then $\swr(\pi)$ switches $\pi_i$ and $\pi_j$ if and only if $\swr_k(\pi)$ switches $\pi_i$ and $\pi_j$ for some $k\in[n]$. 
\end{cor}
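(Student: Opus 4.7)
The plan is to derive both directions from Lemma \ref{lemma:swr properties} by giving a direct characterization of when a single $\swr_h$ swaps two specified values of $\pi$. Without loss of generality, assume $i < j$. I would first dispatch the declivity case: if $\pi_i > \pi_j$, then Lemma \ref{lemma:swr properties}(i) says $\swr(\pi)$ does not switch them; and in fact the argument there shows that each individual $\swr_h$ preserves declivities as well, so both sides of the biconditional are vacuously false. Hence we may assume $\pi_i < \pi_j$.

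Next I would unpack the definition of $\swr_h$ to characterize exactly when $\swr_h(\pi)$ switches $\pi_i$ and $\pi_j$. Writing $m$ for the unique index with $\pi_m = h$ and recalling that $\swr_h(\pi) = T B \pi_{\geq m}$, the only change in relative order caused by $\swr_h$ is that every element of $T$ is placed before every element of $B$; within $T$, within $B$, and within $\pi_{\geq m}$, the order is preserved, and elements of $T \cup B$ remain before elements of $\pi_{\geq m}$. In particular, if $j \geq m$ then $\pi_j \in \pi_{\geq m}$ sits after $\pi_i$ in $\swr_h(\pi)$, so no switch occurs. Thus $\swr_h(\pi)$ switches $\pi_i$ and $\pi_j$ precisely when $m > j$, $\pi_i \in B$, and $\pi_j \in T$, i.e., $m > j$ and $\pi_i < h < \pi_j$. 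Setting $k = m$ and $h = \pi_k$, this is equivalent to the existence of some $k > j$ with $\pi_i < \pi_k < \pi_j$, which is exactly the condition that $(\pi_i, \pi_j, \pi_k)$ matches the pattern $132$.

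Finally, this last condition is precisely the criterion appearing in Lemma \ref{lemma:swr properties}(ii) for $\swr(\pi)$ to switch $\pi_i$ and $\pi_j$. So the two conditions agree and the corollary is immediate. There is essentially no obstacle: the only subtlety is being careful about the $\pi_{\geq m}$ portion of the definition, so that we require $m > j$ rather than merely $m > i$, and handling the declivity case before invoking Lemma \ref{lemma:swr properties}(ii).
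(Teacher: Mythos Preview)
Your argument is correct and follows the same route as the paper: both reduce the corollary to the $132$-pattern criterion of Lemma~\ref{lemma:swr properties}, then identify that criterion with the condition that a single $\swr_h$ swaps $\pi_i$ and $\pi_j$. The only difference is detail: where the paper simply says the latter equivalence holds ``by definition,'' you explicitly unpack $\swr_h(\pi)=TB\pi_{\geq m}$ and verify that a swap of an acclivity occurs exactly when $m>j$ and $\pi_i<h<\pi_j$; you also treat the declivity case separately rather than folding it into Lemma~\ref{lemma:swr properties}. One minor point: in the corollary the variable $k$ indexes a height (so the map is $\swr_k$), whereas in your final sentence you use $k$ as a position (writing $\pi_k$); since $\pi$ is a bijection on $[n]$ this causes no logical issue, but it would be cleaner to keep the height variable $h$ throughout and conclude ``some $h\in[n]$'' directly.
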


\begin{proof}
Without loss of generality, let $i<j$. By Lemma \ref{lemma:swr properties}, $\swr(\pi)$ switches $\pi_i$ and $\pi_j$ if and only if there exists $k>j$ for which $(\pi_i,\pi_j,\pi_k)$ matches the pattern 132. However, the latter condition occurs by definition if and only if $\swr_{\pi_k}$ switches $\pi_i$ and $\pi_j$.
\end{proof}

Using similar techniques, or by noting that $\swl$ and $\swr$ are inverses, we can show Corollary \ref{cor:swr properties} for swl on 132-avoiding permutations as well. This gives rise to the following depictions of $\swl$ and $\swr$, first given by Defant in \cite{hannaspaper}.

\begin{fact}\label{fact:draw swl}
Let $\tau$ avoid 132. Then we can write
\[\tau=
\begin{tikzpicture}[baseline=(O.base), scale=1.3]
\node (O) at (0,0) {};
\draw (0,0) -- (0,1) -- (-1,1) -- (-1,0) -- (0,0);
\draw (-.5,.5) node {$A$};
\draw (0,0) -- (1,0) -- (1,-1) -- (0,-1) -- (0,0);
\draw (.5,-.5) node {$B$};
\fill (1,0) circle(.7mm) node[above right] {$C$};
\end{tikzpicture}
\quad\text{ and }\quad
\swl(\tau)=
\begin{tikzpicture}[baseline=(O.base), scale=1.3]
\node (O) at (0,0) {};
\draw (0,0) -- (0,-1) -- (-1,-1) -- (-1,0) -- (0,0);
\draw (-.5,-.5) node {\small$\swl(B)$};
\draw (0,0) -- (1,0) -- (1,1) -- (0,1) -- (0,0);
\draw (.5,.5) node {\small$\swl(A)$};
\fill (1,0) circle(.7mm) node[below right] {$C$};
\end{tikzpicture}
\]
where $A$ and $B$ are (potentially empty) regions of the plot of $\tau$ that are treated as plots of smaller 132-avoiding permutations, and where $C$ is a single point. Similarly, if $\pi$ avoids 312 then we can write
\[\pi=
\begin{tikzpicture}[baseline=(O.base), scale=1.3]
\node (O) at (0,0) {};
\draw (0,0) -- (0,-1) -- (-1,-1) -- (-1,0) -- (0,0);
\draw (-.5,-.5) node {$B$};
\draw (0,0) -- (1,0) -- (1,1) -- (0,1) -- (0,0);
\draw (.5,.5) node {$A$};
\fill (1,0) circle(.7mm) node[below right] {$C$};
\end{tikzpicture}
\quad\text{ and }\quad
\swr(\pi)=
\begin{tikzpicture}[baseline=(O.base), scale=1.3]
\node (O) at (0,0) {};
\draw (0,0) -- (0,1) -- (-1,1) -- (-1,0) -- (0,0);
\draw (-.5,.5) node {\small$\swl(A)$};
\draw (0,0) -- (1,0) -- (1,-1) -- (0,-1) -- (0,0);
\draw (.5,-.5) node {\small$\swr(B)$};
\fill (1,0) circle(.7mm) node[above right] {$C$};
\end{tikzpicture}
.\]
\end{fact}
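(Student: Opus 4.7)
My plan is to choose $C$ to be the last point of $\tau$ (respectively $\pi$) and to decompose the remaining points by whether their values lie above or below the value of $C$. The pattern-avoidance hypothesis then forces the block structure shown in the pictures: for $\tau$ avoiding $132$, any point of $B$ (value below $\tau_n$) appearing before a point of $A$ (value above $\tau_n$) would, together with $C$, form a $132$ pattern, so all of $A$ precedes all of $B$; a symmetric argument shows all of $B$ precedes all of $A$ in the $312$-avoiding case. Because $A$ and $B$ are sub-permutations of $\tau$ (respectively $\pi$), they inherit the relevant pattern-avoidance, so the maps $\swl(A)$, $\swl(B)$, $\swr(A)$, $\swr(B)$ appearing in the statement are well-defined.

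For the first claim I would analyze $\swl(\tau) = \swl_1 \circ \cdots \circ \swl_n(\tau)$ one operator at a time, processing them in the natural composition order (so $\swl_n$ first, $\swl_1$ last). The key observation is that each $\swl_i$ factors through the block decomposition. When $i > \tau_n$, the point of height $i$ lies in $A$, and because $A$ precedes $B$ in position, every point to the left of the height-$i$ point also lies in $A$; hence $\swl_i$ coincides with the corresponding sliding operator on the sub-permutation $A$, and composing all such $\swl_i$ transforms the $A$-block into $\swl(A)$ while leaving $B$ and $C$ untouched. Then $\swl_{\tau_n}$ acts at $C$: to its left, $\swl(A)$ is northwest and $B$ is southwest, so $\swl_{\tau_n}$ rearranges the prefix to $(B \mid \swl(A))$, producing $(B \mid \swl(A) \mid C)$. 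Finally, for each $i < \tau_n$ the point of height $i$ lies in $B$, and every point to its left is in $B$ by the previous step, so $\swl_i$ restricts to a sliding operator on $B$. Composing yields $\swl(\tau) = (\swl(B) \mid \swl(A) \mid C)$, as desired.

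For the second claim I would run the mirror-image argument for $\swr = \swr_1 \circ \cdots \circ \swr_n$ applied to $\pi = (B \mid A \mid C)$: the analysis proceeds identically except that $\swr_i$ places northwest points to the left of southwest points, so the combined action processes the $A$-block first via the corresponding sliding operators. Alternatively, and more cleanly, the second claim follows from the first via the inverse relationship between $\swl$ and $\swr$: setting $\tau = \swr(\pi)$ in the first claim and then unraveling the resulting block identities immediately recovers the claimed formula for $\swr(\pi)$ in terms of the blocks of $\pi$.

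The main technical obstacle is verifying the factoring step --- namely, that after the operators $\swl_{i+1}, \ldots, \swl_n$ have been applied, the points lying to the left of the current height-$i$ point still come entirely from the intended block. This requires an inductive argument showing that no previously processed operator moves a point across block boundaries, which holds because each sliding operator only permutes points lying to the left of its pivot and on a single side of it in value, and (by the induction hypothesis) these are contained within a single block of the evolving decomposition.
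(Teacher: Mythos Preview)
The paper does not actually prove this statement: it is recorded as a \emph{Fact}, attributed to Defant's earlier paper, immediately after the sentence ``Using similar techniques, or by noting that $\swl$ and $\swr$ are inverses, we can show Corollary~\ref{cor:swr properties} for $\swl$ on $132$-avoiding permutations as well. This gives rise to the following depictions\ldots'' So there is no in-paper proof to compare against; the only hint is that the result can be read off from the characterization of exactly which pairs of entries get switched by $\swl$/$\swr$ (Lemma~\ref{lemma:swr properties} and Corollary~\ref{cor:swr properties}).

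Your approach is different from that hint and is correct. Rather than appealing to the switched-pairs characterization, you process the composition $\swl_1\circ\cdots\circ\swl_n$ one factor at a time and show that each $\swl_i$ acts entirely within one block of the evolving decomposition. The inductive verification you flag as the ``main technical obstacle'' is exactly what is needed, and it goes through: each $\swl_i$ only permutes points strictly left of the height-$i$ point, and your induction hypothesis guarantees those points all lie in the intended block. This is a clean, self-contained argument that does not require Lemma~\ref{lemma:swr properties} at all; the paper's suggested route is shorter once that lemma is in hand, but yours is more elementary.

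One remark worth making: in the second display of the Fact the upper-left block of $\swr(\pi)$ is labeled $\swl(A)$, but this is a typo for $\swr(A)$. Indeed, the block $A$ of a $312$-avoiding $\pi$ inherits $312$-avoidance but need not avoid $132$ (e.g.\ $\pi=124653$ has $A$ of pattern $132$), so under the paper's convention $\swl(A)$ is not even defined. Both of your arguments --- the direct operator analysis and the inverse-relationship shortcut --- correctly yield $\swr(A)$ there, so your proof establishes the intended statement.
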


\section{Extending $\swl$ to Valid Hook Configurations}

To construct a bijection between 132-avoiding valid hook configurations and Motzkin-Tamari intervals, we begin by constructing an injection $\W_n: \VHC(\Av_n(132))\to\VHC(\Av_n(312))$, which is modeled off of $\swl$. In the next section, we will define Defant's bijection $\widehat\LL_n:\VHC(\Av_n(312))\to\Int(\M_{n-1}^C)$ and show that the composition $\widehat{\LL}_n\circ\W_n$ has image exactly the Motzkin-Tamari intervals, thereby demonstrating a bijection from $\VHC(\Av_n(132))$ to $\VHC(\Av_n(312))$. This extends a bijection of Defant's in \cite{hannaspaper}, which mapped certain 132-avoiding permutations with unique valid hook configurations to intervals of a subposet of $\M_{n-1}^T$ by composing a variant of $\widehat\LL_n$ with $\swl$.

\begin{definition}
A point $P$ in the plot of $\pi$ is a \emph{left-to-right maximum} (resp.\ \emph{minimum}) if there are no points left of $P$ in the plot of $\pi$ that are strictly higher (resp.\ lower) than $P$.
\end{definition}

We note that northeast endpoints of hooks can never be left-to-right minima. When $\pi$ avoids 312, the northeast endpoints of a valid hook configuration must in fact be left-to-right maxima. This motivates the following map, which allows us to replace points with valid northeast endpoints in 312-avoiding permutations.

\begin{definition}
Let $\pi\in\Av(312)$. If $(i,\pi_i)$ is a point of $\pi$, we define $\nw(i,\pi_i)$ (which we pronounce ``northwest representative'') as the leftmost left-to-right maximum $(j,\pi_j)$ with $j\leq i$ and $\pi_j\geq \pi_i$.
\end{definition}

We need one more piece of terminology to define $\W_n$. Let $(\pi, V)\in\VHC(\Av_n(132))$, where we recall that $V$ is the set of northeast endpoints of some valid hook configuration on $\pi$. We say that the \emph{image} of a point $(i,\pi_i)$ of  $\pi$ under $\swl$ is the point of height $\pi_i$ in the image $\swl(\pi)$ and write this as $\swl(i,\pi_i)$, and define $\swr(i,\pi_i)$ analogously. It is natural to define image in this manner when discussing valid hook configurations, because this allows us to say $\swl$ and $\swr$ preserve southwest hook endpoints as in the following lemma.

\begin{lemma}\label{lemma:descent tops}
The maps $\swl$ and $\swr$ take descent tops to descent tops.
\end{lemma}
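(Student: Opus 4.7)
The plan is to prove the statement for $\swr$ in detail and obtain the $\swl$ statement by a parallel argument. I would start with $\pi \in \Av_n(312)$ having a descent top $\pi_i$ (so $\pi_i > \pi_{i+1}$), write $\tau = \swr(\pi)$, and let $i'$ denote the position of $\pi_i$ in $\tau$ (so $\tau_{i'} = \pi_i$). The goal is to show $\tau_{i'+1} < \pi_i$, and I would proceed by contradiction: suppose $\tau_{i'+1} = \pi_j$ with $\pi_j > \pi_i$ for some $j \neq i$.

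The argument splits on the sign of $j-i$. If $j < i$, Lemma \ref{lemma:swr properties}(i) preserves the declivity $(\pi_j,\pi_i)$, contradicting $\pi_i$ sitting at position $i'$ before $\pi_j$ at $i'+1$. The substantive case is $j > i$: here $j > i+1$ since $\pi_j > \pi_i > \pi_{i+1}$, and the preserved declivity $(\pi_i,\pi_{i+1})$ forces $\pi_{i+1}$ to appear in $\tau$ strictly after $\pi_i$; as $\tau_{i'+1} = \pi_j \neq \pi_{i+1}$, in fact $\pi_{i+1}$ lies strictly after $\pi_j$. Thus the acclivity $(\pi_{i+1},\pi_j)$ has been switched by $\swr$, and Lemma \ref{lemma:swr properties}(ii) produces an index $k > j$ with $\pi_{i+1} < \pi_k < \pi_j$.

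The concluding step compares $\pi_k$ to $\pi_i$. If $\pi_k > \pi_i$, then $(\pi_i,\pi_j,\pi_k)$ matches $132$, and Lemma \ref{lemma:swr properties}(ii) applied to $(\pi_i,\pi_j)$ forces this acclivity to be switched, contradicting $\tau_{i'} = \pi_i$ immediately preceding $\tau_{i'+1} = \pi_j$. If $\pi_k < \pi_i$, then $\pi_{i+1} < \pi_k < \pi_i$ with $i < i+1 < k$, so $(\pi_i,\pi_{i+1},\pi_k)$ is a $312$ pattern in $\pi$, contradicting $312$-avoidance. Either branch yields a contradiction, so $\tau_{i'+1} < \pi_i$ and $\pi_i$ remains a descent top in $\tau$.

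For $\swl$, I would run the same case analysis inside $\tau \in \Av_n(132)$ using the analog of Lemma \ref{lemma:swr properties}: $\swl$ preserves acclivities, and it switches a declivity $(\tau_i,\tau_j)$ exactly when there exists $\ell > j$ with $\tau_j < \tau_\ell < \tau_i$ (a $312$ witness to the right). The main obstacle I foresee is justifying this $\swl$ analog cleanly, since the excerpt establishes only Corollary \ref{cor:swr properties} (and not the full lemma) for $\swl$; I would first prove the analog by observing that each individual $\swl_k$ only swaps prefix elements $\tau_a,\tau_b$ straddling the value $k$ (hence only declivities can be switched), so by the $\swl$-version of Corollary \ref{cor:swr properties} the witness $\ell$ exists iff the declivity is switched. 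With that in hand, the contradiction-based argument above transfers verbatim to establish the descent-top property under $\swl$.
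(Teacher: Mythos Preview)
Your argument for $\swr$ is correct and genuinely different from the paper's: the paper proves the lemma by structural induction via Fact~\ref{fact:draw swl} (the recursive block decomposition of $132$- and $312$-avoiding permutations), while you give a direct elementwise argument using both parts of Lemma~\ref{lemma:swr properties}. Your route is slick and self-contained once Lemma~\ref{lemma:swr properties} is in hand.

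The gap is in the $\swl$ half. Your $\swr$ argument hinges on one specific fact: the descent pair $(\pi_i,\pi_{i+1})$ is a \emph{declivity}, and $\swr$ preserves declivities, so $\pi_{i+1}$ stays to the right of $\pi_i$. For $\swl$ the preserved objects are \emph{acclivities}, but the descent pair $(\tau_i,\tau_{i+1})$ is still a declivity, and $\swl$ can and does switch it: already for $\tau=312\in\Av_3(132)$ one has $\swl(312)=132$, so the adjacent pair $(3,1)$ is swapped (value $3$ remains a descent top, but with a new partner). Thus the step ``the preserved declivity $(\pi_i,\pi_{i+1})$ forces $\pi_{i+1}$ to appear \dots strictly after $\pi_i$'' has no analogue, and the case split does not transfer verbatim. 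In particular, when $\pi_{i'+1}=\tau_j$ with $j<i$ (the case you dispatch instantly for $\swr$), the pair $(\tau_j,\tau_i)$ is a declivity that may legitimately be switched by $\swl$, so no immediate contradiction arises; pushing the analysis further requires nontrivial new ideas. One clean repair is to keep your $\swr$ argument and then either (a) invoke Fact~\ref{fact:draw swl} to show that $\swr$ preserves the number of descents, so that $D(\pi)\subseteq D(\swr(\pi))$ together with $|D(\pi)|=|D(\swr(\pi))|$ gives equality and hence the $\swl$ statement by inversion, or (b) prove the $\swl$ case directly by the paper's short induction.
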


\begin{proof}
We show this for $\swl$, using induction and Fact \ref{fact:draw swl}. Assume $\tau\in S_n$ avoids 132. If $n=0$ or $n=1$ then $\tau$ has no descent tops. For larger $n$, we split $\tau$ into regions $A$ and $B$ and point $C$ as in Fact \ref{fact:draw swl}. By the inductive hypothesis, all descent tops of $\tau$ in $A$ or $B$ with descent bottom in the same region are taken to descent tops in $\swl(A)$ or $\swl(B)$. The only descent top of $\tau$ unaccounted for is the rightmost element of $A$ when $A$ is nonempty, as that point has descent bottom $C$. However, we see from Fact \ref{fact:draw swl} that the rightmost point of $A$ remains the rightmost point in $\swl(A)$, and that point must be a descent top in $\swl(\tau)$ with descent bottom either the leftmost element of $\swr(B)$, or $C$ if $B$ is empty.

The proof for $\swr$ is analogous.
\end{proof}

We now define $\W_n:\VHC(\Av_n(132))\to\VHC(\Av_n(312))$ as
\[\W_n(\pi, V)=(\swl(\pi), \nw(\swl(V))),\]
where $\nw(\swl(V))=\{\nw(\swl(A)):A\in V\}$. To show that $\W_n$ is well-defined and injective, we further study the map $\nw$. We can define an equivalence relation on points of the plot of $\pi$ based on their northwest representatives. The corresponding equivalence classes form horizontal stripes, as we will see in Proposition \ref{prop:stripes}.

\begin{definition}
The \emph{$\nw$ stripe} of a point $(i,\pi_i)$ is the equivalence class $\{(j,\pi_j):\nw(i,\pi_i)=\nw(j,\pi_j)\}$, denoted by $\s(i,\pi_i)$.
\end{definition}

\begin{prop}
Each $\nw$ stripe of a (312-avoiding) permutation $\pi$ is descending from left to right.
\end{prop}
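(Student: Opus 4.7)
The plan is a direct proof by contradiction using 312-avoidance. Suppose two points $(i_1,\pi_{i_1})$ and $(i_2,\pi_{i_2})$ in the plot of $\pi$ share the same northwest representative $(j,\pi_j)$, with $i_1<i_2$. I will show that if these points form an acclivity (i.e.\ $\pi_{i_1}<\pi_{i_2}$), then $\pi$ contains a $312$ pattern at positions $j<i_1<i_2$, using $(j,\pi_j)$ as the ``$3$.''

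First I would unpack the definition of $\nw$: because $(j,\pi_j)$ is the northwest representative of both stripe points, we have $j\le i_1<i_2$ and $\pi_j\ge\pi_{i_1}$, $\pi_j\ge\pi_{i_2}$. Next I would argue that $j\ne i_1$: if $j=i_1$, then $\pi_j=\pi_{i_1}$, but we also have $\pi_j\ge\pi_{i_2}>\pi_{i_1}$, a contradiction. Therefore $j<i_1<i_2$. Since $\pi$ is a permutation its values are distinct, so combined with $j\ne i_1,i_2$ the weak inequalities strengthen to $\pi_j>\pi_{i_1}$ and $\pi_j>\pi_{i_2}$. Together with the assumed acclivity $\pi_{i_1}<\pi_{i_2}$, the three entries at positions $j<i_1<i_2$ realize the pattern $312$, contradicting the 312-avoidance of $\pi$.

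There is essentially no substantive obstacle; the argument is a short unraveling of the definition of $\nw$ matched to the shape of the $312$ pattern. The only subtlety is ruling out the degenerate case where $(j,\pi_j)$ coincides with $(i_1,\pi_{i_1})$, which is precisely what the acclivity assumption excludes. In fact, this lemma highlights why $\nw$ is well-suited to 312-avoiding permutations: its stripes are exactly the horizontal layers lying weakly below each successive left-to-right maximum, and 312-avoidance forces the points inside any such layer to appear in strictly decreasing order.
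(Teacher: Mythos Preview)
Your proof is correct and follows essentially the same approach as the paper: both argue that if two points in a stripe formed an acclivity, then together with the common northwest representative they would realize a $312$ pattern. Your version is slightly more explicit in ruling out the degenerate case $j=i_1$, which the paper handles implicitly via the preliminary observation that the representative $P$ lies strictly left of and above every other point of its stripe.
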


\begin{proof}
Fix a left-to-right maximum $P$ of $\pi$ and consider the stripe $\s(P)$. Clearly $P\in\s(P)$, and $P$ is to the left and above every other point of $\s(P)$. If there were two points $Q,R\in\s(P)$ with $\pi$ ascending left from $Q$ to $R$ then $PQR$ would be an occurrence of the pattern 312 in $\pi$, which would be a contradiction. Hence the $\nw$ stripe $\s(P)$ is descending.
\end{proof}

\begin{prop}\label{prop:stripes}
If $P$ and $Q$ are left-to-right maxima in a 312-avoiding permutation $\pi$ and $P$ is below $Q$ then every point of $\s(P)$ is below every point of $\s(Q)$.
\end{prop}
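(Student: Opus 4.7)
The plan is to show that any point $Y\in\s(Q)$ lies strictly above the left-to-right maximum $P$, and any point $X\in\s(P)$ lies weakly below $P$; combining these gives strict inequality of heights. The argument relies purely on unpacking the definition of $\nw$, together with a minimality-of-leftmost argument; no new structural ideas about $312$-avoidance are needed beyond the fact (automatic from the definition of $\nw$) that $Q$ is strictly to the right of $P$.

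First I would observe that since $P$ and $Q$ are both left-to-right maxima and $Q$ is strictly above $P$, the point $Q$ cannot lie to the left of $P$: otherwise $Q$ would be a point strictly above $P$ occurring to its left, contradicting the fact that $P$ is a left-to-right maximum. Hence $Q$ lies strictly to the right of $P$.

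Next, fix an arbitrary $Y\in\s(Q)$, so $\nw(Y)=Q$. By definition of $\nw$, the position of $Y$ is at least that of $Q$, hence strictly greater than that of $P$. Thus $P$ is a left-to-right maximum lying strictly to the left of $Y$. If $P$ had height at least the height of $Y$, then $P$ would satisfy both conditions in the definition of $\nw(Y)$; since $P$ is strictly to the left of $Q$, the leftmost such candidate would not be $Q$, contradicting $\nw(Y)=Q$. Therefore the height of $Y$ is strictly greater than the height of $P$.

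Finally, for any $X\in\s(P)$, the definition of $\nw$ gives $\nw(X)=P$ and in particular the height of $X$ is at most the height of $P$. Combining, for any $X\in\s(P)$ and any $Y\in\s(Q)$,
\[
\text{height}(X)\;\leq\;\text{height}(P)\;<\;\text{height}(Y),
\]
which is the desired conclusion. The only place any care is needed is in separating strict from weak inequalities, which I expect to be the lightest of obstacles.
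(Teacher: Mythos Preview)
Your proof is correct and follows essentially the same approach as the paper's: show that every point of $\s(P)$ lies at or below $P$ (immediate from the definition of $\nw$), and that every point of $\s(Q)$ lies strictly above $P$ by the leftmost-minimality clause in the definition of $\nw$. Your observation that the $312$-avoidance hypothesis is not actually invoked in this step is also accurate; the paper's own argument does not use it here either.
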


\begin{proof}
Every point of $\s(P)$ is below $P$, so it suffices to show that $P$ is below any point of $\s(Q)$. Every point $R\in\s(Q)$ is to the right of $Q$ and therefore to the right of $P$. If $R$ were additionally below $P$ then $P$ would be a left-to-right maximum above and to the right of $R$ and left of $Q$, so $\nw(R)$ would not be $Q$. It follows that every such $R$ must be above $P$ and therefore above all of $\s(P)$.
\end{proof}

This allows us to define an ordering on the $\nw$ stripes of a 312-avoiding permutation by height.

\begin{theorem}\label{thm:stripe structure}
Let $\tau$ be a 132-avoiding permutation and let $\pi=\swl(\tau)$.
The least $\nw$ stripe of $\pi$ consists of only the images of left-to-right minima of $\tau$. Each subsequent $\nw$ stripe has only one element that is not the image of a left-to-right minimum of $\tau$, and that point is the rightmost point of the stripe.
\end{theorem}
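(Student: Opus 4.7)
The plan is to induct on the length $n$ of $\tau$, exploiting the recursive structure of 132-avoiding permutations given in Fact \ref{fact:draw swl}. The base case $n = 1$ is immediate. For the inductive step, decompose $\tau = A \cdot B \cdot C$ where $C = \tau_n$, the block $A$ comprises the points before $C$ with value greater than $\tau_n$, and $B$ comprises the points before $C$ with value less than $\tau_n$. Then $\pi = \swl(\tau) = \swl(B) \cdot \swl(A) \cdot C$ as three consecutive blocks of positions. Let $M_1^B < \cdots < M_s^B$ denote the left-to-right maxima of $\swl(B)$ and $M_1^A < \cdots < M_t^A$ those of $\swl(A)$.

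First I would unpack the relevant combinatorial data. Since every value in $\swl(A)$ exceeds $\tau_n$, which in turn exceeds every value in $\swl(B)$, the left-to-right maxima of $\pi$ are exactly the sequence $M_1^B, \ldots, M_s^B, M_1^A, \ldots, M_t^A$, to which $C$ is appended precisely when $A = \emptyset$. The left-to-right minima of $\tau$ are exactly the left-to-right minima of $A$ (viewed as a sub-permutation), the left-to-right minima of $B$, and $C$ itself if and only if $B = \emptyset$; the strict separation of $A$- and $B$-values by $\tau_n$ is what underpins both characterizations.

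Next I would check that the $\nw$ stripe structure of $\pi$ respects the block decomposition. For $P \in \swl(B)$, the only left-to-right maxima of $\pi$ at positions at most that of $P$ are the $M_i^B$, so computing $\nw(P)$ inside $\pi$ gives the same result as computing it inside $\swl(B)$. For $P \in \swl(A)$, each $M_i^B$ has value less than $\pi_P$ and hence fails to qualify as $\nw(P)$, so again $\nw(P)$ can be computed within $\swl(A)$. Finally, $\nw(C) = M_1^A$ if $A \neq \emptyset$, and $\nw(C) = C$ if $A = \emptyset$. Consequently, the stripes of $\pi$ are exactly the stripes of $\swl(B)$, together with the stripes of $\swl(A)$ (with $C$ adjoined to the lowest such stripe when $A \neq \emptyset$), plus a singleton $\{C\}$ when $A = \emptyset$.

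The conclusion then follows by applying the inductive hypothesis to $A$ and $B$ and tracking $C$. In the principal sub-case $A, B \neq \emptyset$: the lowest stripe of $\pi$ is the lowest stripe of $\swl(B)$, which by induction contains only images of left-to-right minima of $B$ --- and these are left-to-right minima of $\tau$; each stripe $\s(M_i^B)$ with $i \geq 2$ and each $\s(M_i^A)$ with $i \geq 2$ inherits its ``unique rightmost non-image element'' from the inductive hypothesis; and $\s(M_1^A)$ is the lowest stripe of $\swl(A)$ together with $C$, where $C$ is the rightmost point of $\pi$ and is not the image of any left-to-right minimum of $\tau$ (as $B \neq \emptyset$). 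The boundary cases behave analogously: if $B = \emptyset$, then $C$ is itself a left-to-right minimum of $\tau$ and its presence in the lowest stripe $\s(M_1^A)$ is consistent with the claim; if $A = \emptyset$, then $\{C\}$ is a singleton top stripe, with $C$ both the rightmost point and the sole non-image element. The main obstacle I anticipate is keeping this casework organized and verifying that the properties ``image of a left-to-right minimum'' and ``rightmost point of the stripe'' remain well-behaved when passing between a sub-permutation and $\pi$.
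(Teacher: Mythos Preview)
Your proposal is correct and follows essentially the same approach as the paper: induction on $n$ via the recursive decomposition of Fact~\ref{fact:draw swl}, identifying the left-to-right minima of $\tau$ and the $\nw$ stripes of $\pi$ in terms of those of the two blocks, and then tracking the position of $C$. Your version is slightly more explicit than the paper's in verifying that $\nw$ computed in $\pi$ agrees with $\nw$ computed in each sub-block, but the argument is otherwise the same.
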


\begin{proof}
We use induction. This clearly holds for the unique element of $S_1$. For larger permutations, we recall that Fact \ref{fact:draw swl} allows us to write
\[\tau=
\begin{tikzpicture}[baseline=(O.base), scale=1.3]
\node (O) at (0,0) {};
\draw (0,0) -- (0,1) -- (-1,1) -- (-1,0) -- (0,0);
\draw (-.5,.5) node {$A$};
\draw (0,0) -- (1,0) -- (1,-1) -- (0,-1) -- (0,0);
\draw (.5,-.5) node {$B$};
\fill (1,0) circle(.7mm) node[above right] {$C$};
\end{tikzpicture}
\quad\text{ and }\quad
\pi=\swl(\tau)=
\begin{tikzpicture}[baseline=(O.base), scale=1.3]
\node (O) at (0,0) {};
\draw (0,0) -- (0,-1) -- (-1,-1) -- (-1,0) -- (0,0);
\draw (-.5,-.5) node {\small$\swl(B)$};
\draw (0,0) -- (1,0) -- (1,1) -- (0,1) -- (0,0);
\draw (.5,.5) node {\small$\swl(A)$};
\fill (1,0) circle(.7mm) node[below right] {$C$};
\end{tikzpicture}
\]
where $C$ is a single point and $A$ and $B$ are (potentially empty) substrings of $\tau$.

Note that the left-to-right minima of $\tau$ are the left-to-right minima of $A$ and $B$, together with $C$ if $B$ is empty. Furthermore, the $\nw$ equivalence classes of $\pi$ are the $\nw$ equivalence classes of $A$ and $B$, with $C$ added to the least $\nw$ equivalence class of $A$ (if $A$ is empty, we treat it as having a single empty equivalence class).

If $B$ is empty, then the least equivalence class of $\tau$ is the least $\nw$ equivalence class of $A$, with $C$ added in; furthermore because $B$ is empty, $C$ is a left-to-right minimum of $\tau$. Any higher $\nw$ equivalence classes satisfy the desired property by the inductive hypothesis on $A$.

If $B$ is nonempty then every $\nw$ equivalence class of $\tau$ but the least class of $A$, which is also the class containing $C$, automatically satisfies the hypothesis by the inductive hypothesis on $A$ and $B$. The least element of this class is $C$, which is not the image of a left-to-right minimum in $\tau$ because $B$ is nonempty; all other elements of this class are images of left-to-right minima in $\tau$ by the inductive hypothesis on $A$.
\end{proof}

We can use Theorem \ref{thm:stripe structure} to show that $\W_n$ is well-defined and injective.

\begin{prop}
The map $\W_n$ is well-defined. That is, its image is always a valid hook configuration on the image permutation.
\end{prop}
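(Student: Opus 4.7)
The plan is to invoke Proposition \ref{prop:new VHC} with the permutation $\pi' = \swl(\pi)$ and candidate set $V' = \nw(\swl(V))$. The given valid hook configuration on $\pi$ already supplies a bijection $\psi$ from the descent tops of $\pi$ to $V$, and by Lemma \ref{lemma:descent tops} applied to both $\swl$ and its inverse $\swr$, $\swl$ restricts to a bijection between the descent tops of $\pi$ and those of $\pi'$. I would therefore define
\[
\phi\bigl(\swl(\beta)\bigr) = \nw\bigl(\swl(\psi(\beta))\bigr)
\]
for each descent top $\beta$ of $\pi$, and then verify that $\phi$ is a bijection from descent tops of $\pi'$ onto $V'$ satisfying conditions (i) and (ii) of Proposition \ref{prop:new VHC}.

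The principal obstacle is injectivity of $\phi$, because $\nw$ collapses each stripe to a single point. To handle it, I would first observe that no element of $V$ can be a left-to-right minimum of $\pi$: every northeast endpoint lies strictly above its corresponding southwest endpoint, which is strictly to its left. Consequently, for every $v \in V$ the point $\swl(v)$ is not the image of a left-to-right minimum of $\pi$. If $\nw(\swl(\psi(\beta_1))) = \nw(\swl(\psi(\beta_2)))$ then $\swl(\psi(\beta_1))$ and $\swl(\psi(\beta_2))$ lie in a common $\nw$ stripe of $\pi'$, and by Theorem \ref{thm:stripe structure} such a stripe contains at most one point that is not the image of a left-to-right minimum of $\pi$. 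Hence $\swl(\psi(\beta_1)) = \swl(\psi(\beta_2))$ and so $\beta_1 = \beta_2$. Surjectivity is automatic from the construction.

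For condition (i) of Proposition \ref{prop:new VHC}, the height inequality is immediate, since $\swl$ preserves heights and $\nw$ only raises height, so $\phi(\swl(\beta))$ is strictly higher than $\swl(\beta)$. For the horizontal inequality, I would invoke the analog of Lemma \ref{lemma:swr properties}(i) for $\swl$ acting on $132$-avoiding permutations, proved by the same argument step by step: each $\swl_h$ preserves acclivities because low and high points to the left of the pivot are each moved while keeping their internal order and placing lows before highs. Thus $\swl(\psi(\beta))$ lies strictly to the right of $\swl(\beta)$. If the left-to-right maximum $Q = \nw(\swl(\psi(\beta)))$ were strictly to the left of $\swl(\beta)$, the triple $(Q,\swl(\beta),\swl(\psi(\beta)))$ would form a $312$-pattern in $\pi'$, contradicting $\pi' \in \Av(312)$; and $Q = \swl(\beta)$ is ruled out on heights. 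Condition (ii) is then free, since $Q$ is a left-to-right maximum of $\pi'$ and so no point whatsoever to its left, in particular none between $\swl(\beta)$ and $Q$, can exceed $Q$ in height. With these four checks assembled, Proposition \ref{prop:new VHC} yields that $V'$ is a valid hook configuration on $\pi'$, showing $\W_n$ is well-defined.
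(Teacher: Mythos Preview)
Your proposal is correct and follows essentially the same route as the paper's proof: both invoke Proposition \ref{prop:new VHC}, use Lemma \ref{lemma:descent tops} to handle descent tops, appeal to the acclivity-preservation analog of Lemma \ref{lemma:swr properties}(i) together with a $312$-avoidance argument to place $\nw(\swl(Q))$ strictly northeast of $\swl(\beta)$, note that condition (ii) is automatic because $\nw(\swl(Q))$ is a left-to-right maximum, and establish injectivity via Theorem \ref{thm:stripe structure} after observing that northeast endpoints are never left-to-right minima. The only cosmetic difference is that you make explicit the bijectivity of $\swl$ on descent tops (via Lemma \ref{lemma:descent tops} applied to both $\swl$ and $\swr$), which the paper leaves implicit.
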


\begin{proof}
Let $(\tau, V)\in\VHC(\Av_n(132))$ and let $(\pi, W)=\W_n(\tau, V)$. We will show that every hook in the valid hook configuration on $\tau$ maps to a hook with distinct endpoints and no points above it in $(\pi, W)$, which will show that $(\pi, W)$ is a valid hook configuration by Proposition \ref{prop:new VHC}.

Assume that there is a hook in $(\tau, V)$ with southwest and northeast endpoints $P$ and $Q$ respectively. By Lemma \ref{lemma:descent tops},  $\swl(P)$ is a descent top in $\pi$. Using reasoning analogous to the proof of Lemma \ref{lemma:swr properties} we can show that $\swl$ preserves acclivities, so $\swl(Q)$ is above and to the right of $\swl(P)$. Furthermore, $\nw(\swl(Q))$ must be strictly above and to the right of $\swl(P)$ because it must be above $\swl(Q)$, which is strictly above $\swl(P)$, and if it were to the left of $\swl(P)$, then $(\nw(\swl(Q)),\swl(P),\swl(Q))$ would match the pattern 312 in $\swl(\tau)$. Hence, the pair of points $\swl(P)$ and $\nw(\swl(Q))$ form a hook in $(\pi, W)$. There is no point in $\pi$ above this hook because $\nw(\swl(Q))$ is a left-to-right maximum of $\pi$.

To complete the proof, we show that the correspondence from descent tops in $\pi$ to points of $W$ is bijective, or that the northeast endpoints $Q\in V$ are mapped to distinct points $\nw(\swl(Q))\in W$. Note that if $Q$ is the northeast endpoint of a hook in $\tau$ then it cannot be a left-to-right minimum of $\tau$. By Theorem \ref{thm:stripe structure}, each point $Q\in V$ is taken to a different $\nw$ stripe of $\pi$ under $\swl$, and therefore maps to a different left-to-right maximum of $\pi$ under $\nw\circ\swl$. It follows that the hooks in the valid hook configuration $(\tau, V)$ map to hooks in $(\pi, W)$ that satisfy the conditions of Proposition \ref{prop:new VHC}, and therefore that $(\pi, W)$ is a valid hook configuration.
\end{proof}

\begin{prop}
The map $\W_n$ is injective.
\end{prop}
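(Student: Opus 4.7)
The plan is to show that $(\tau, V)$ can be reconstructed from $\W_n(\tau, V) = (\pi, W)$. Since Defant \cite{hannaspaper} proved $\swl \colon \Av_n(132) \to \Av_n(312)$ is a bijection, $\pi = \swl(\tau)$ determines $\tau$, so the real work is to recover $V$ from $W$.

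To do this, I would invoke Theorem \ref{thm:stripe structure}. Fix $Q \in V$. Because $Q$ is a northeast endpoint of a hook in $(\tau,V)$, it cannot be a left-to-right minimum of $\tau$. Consequently, $\swl(Q)$ is a point of $\pi$ that is not the image of a left-to-right minimum of $\tau$. By Theorem \ref{thm:stripe structure}, such a point must lie in a $\nw$ stripe of $\pi$ other than the least one, and it is the unique element of its stripe with this property --- equivalently, the rightmost point of that stripe. The stripe itself is determined by $\nw(\swl(Q)) \in W$, since a stripe is exactly the preimage of a left-to-right maximum under $\nw$.

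Therefore, from each element $R \in W$ we recover $\swl(Q)$ as the rightmost point of the $\nw$ stripe $\nw^{-1}(R)$ in $\pi$, and then recover $Q$ by applying $\swr = \swl^{-1}$. This gives the explicit reconstruction
\[
V = \{\swr(R') : R \in W,\ R' \text{ is the rightmost point of } \s(R)\text{ in } \pi\},
\]
which proves $W$ (together with $\pi$) determines $V$.

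I expect no substantial obstacle here; the whole argument is a straightforward unwinding of Theorem \ref{thm:stripe structure} combined with the observation that northeast endpoints of hooks are never left-to-right minima. The only subtle point to verify is that the rightmost point of a non-least $\nw$ stripe is not itself the image of a left-to-right minimum, but this is exactly the content of Theorem \ref{thm:stripe structure}.
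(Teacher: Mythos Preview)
Your proposal is correct and follows essentially the same approach as the paper's proof: recover $\tau$ via $\swr$, then use Theorem \ref{thm:stripe structure} to identify each $\swl(Q)$ as the unique point of its $\nw$ stripe that is not the image of a left-to-right minimum of $\tau$. The paper's proof is terser---it simply says to compute, for each point of $W$, the unique preimage that is not a left-to-right minimum in $\tau$---but your more explicit phrasing (via the rightmost point of the stripe) is exactly the content the paper unpacks immediately afterward when defining $\nw^{-1}$.
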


\begin{proof}
Given the image $(\swl(\tau),\nw(\swl(V)))$ of a valid hook configuration $(\tau,V)$, we can recover $\tau$ by applying $\swr$. We can recover $V$ by computing, for each point of $\nw(\swl(V))$, the unique preimage that is not a left-to-right minimum in $\tau$.
\end{proof}

In fact, if $(\pi, W)\in\VHC(\Av_n(312))$ we can always compute a preimage $(\tau, V)$ under $\W_n$ where $V$ is a set of points in the plot of $\tau$ that don't necessarily form a valid hook configuration. For a left-to-right maximum $A$ in the plot of $\pi$, define $\nw^{-1}(A)$ to be the rightmost element of $\s(A)$. Then we can define $\W^{-1}_n(\pi, W)=(\swr(\pi), \swr(\nw^{-1}(W)))$. It is clear that $\W_n^{-1}$ is a left inverse of $\W_n$; that is, $\W_n^{-1}\circ\W_n=\id_{\VHC(\Av_n(132))}$.

\section{The Image of $\W_n$}\label{section:define LL}

Having defined $\W_n$, we show that its composition with Defant's bijection $\widehat\LL_n:\VHC(\Av_n(312))\to\Int(\M_{n-1}^C)$ is surjective onto intervals of the subposet $\M_{n-1}^T$. To study the image of $\widehat\LL_n\circ\W_n$, we begin by defining the map $\widehat\LL_n$.

Let $(\pi, V)\in\VHC(\Av_n(312))$. We describe how to construct the Motzkin interval $(P, Q)=\widehat\LL_n(\pi, V)$ with $P\leq_C Q$. Let $\R_0,\ldots,\R_\ell$ be the left-to-right maxima of $\pi$ listed from right to left. That is, $\R_\ell=(1,\pi_1)$; moreover, because $\pi$ has a valid hook configuration, we have $\pi_n=n$ and $\R_0=(n,n)$. For convenience let $\R_{\ell+1}=(0,0)$. For $i\in[\ell]$, let $\Gamma_i$ be the set of points strictly right of $\R_i$ and strictly left of $\R_{i-1}$, and let $\Gamma'_i$ be the set of points strictly above $\R_{i+1}$ and strictly below $\R_i$. For each $i\in[\ell]$, set $\gamma_i=|\Gamma_i|$ and $\gamma'_i=|\Gamma'_i|$, and let $X_i$ be $U$ if $\R_{i-1}\in V$ and $E$ otherwise. The definition of $\widehat\LL_n$ in \cite{Defant} states that $\widehat\LL_n(\pi, V)=(P, Q)$ where $P=X_1D^{\gamma_1}\cdots X_\ell D^{\gamma_\ell}$ and $Q=X_1D^{\gamma'_1}\cdots X_\ell D^{\gamma'_\ell}$. We provide an example of this construction in Figure \ref{fig:LL} below.
\begin{figure}[h]\begin{center}
	\begin{tikzpicture}[scale=.5, baseline=(O)]
	\foreach \y [count=\x] in {3, 2, 4, 1, 5, 6}
		\fill (\x, \y) circle (1.3mm);
	\draw (1,3) -- (1,4) -- (3,4) -- (3, 6) -- (6,6);
	\coordinate (O) at (1,3);
	\node at (3.5,-.3) {$\pi=324156$};
	\end{tikzpicture}
	\quad$\implies$\quad
	\begin{tikzpicture}[scale=.5, baseline=(R3)]
	\foreach \y [count=\x] in {3, 2, 4, 1, 5, 6}
		\fill (\x, \y) circle (1.3mm);
	\coordinate (R0) at (6,6); \node at (R0) [above left] {$\R_0$};
	\coordinate (R1) at (5,5); \node at (R1) [above left] {$\R_1$};
	\coordinate (R2) at (3,4); \node at (R2) [above left] {$\R_2$};
	\coordinate (R3) at (1,3); \node at (R3) [above left] {$\R_3$};
	\coordinate (R4) at (0,0); \node at (R4) [below] {$\R_4=(0,0)\,\,\,\,\,\,\,$};
	\coordinate (top) at (0,7.2); \coordinate (bottom) at (0,-1.2);
	\coordinate (left) at (-1.2,0); \coordinate (right) at (7.2,0);
	\fill[gray] (R4) circle (1.3mm);
	\foreach \pt in {R0, R1, R2, R3}
		\draw[dashed] (\pt|-top) -- (\pt|-bottom);
	\foreach \p/\q [count=\i] in {R0/R1,R1/R2,R2/R3}
		\node at ($ (\p|-top)!.5!(\q|-top) $) [above] {$\Gamma_\i$};
	\foreach \pt in {R1, R2, R3, R4}
		\draw[dashed] (\pt-|left) -- (\pt-|right);
	\foreach \p/\q [count=\i] in {R1/R2,R2/R3,R3/R4}
		\node at ($ (\p-|left)!.5!(\q-|left) $) [left] {$\Gamma'_\i$};
	\end{tikzpicture}
	\quad
	$\begin{array}{cc}
	\implies&\begin{array}{c}(\gamma_1,\gamma_2,\gamma_3)=(0,1,1)
	\\(\gamma'_1,\gamma'_2,\gamma'_3)=(0,0,2)\end{array}
	\\&\rotatebox{-90}{\,\,$\implies\,\,\,\,\,\,$}
	\\&P=UEDUD
	\\&Q=UEUDD
	\end{array}$
\caption{Constructing $\widehat\LL_n$ for the valid hook configuration on $\pi=324156$ shown at left.}
\label{fig:LL}
\end{center}\end{figure}
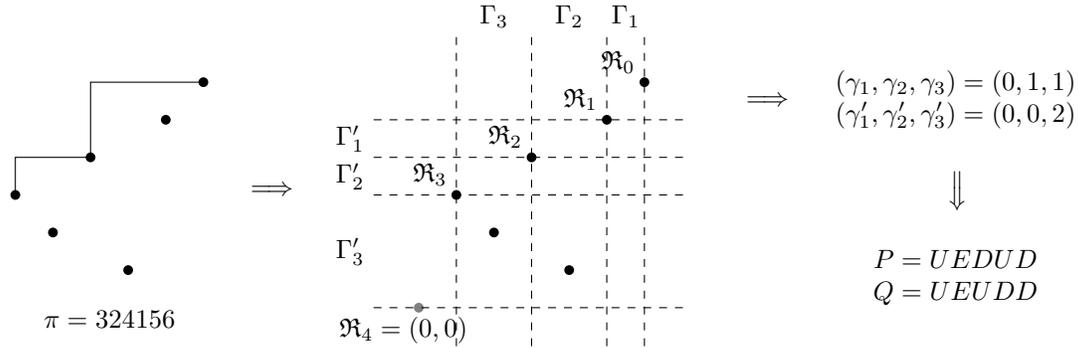

Set $(\tau, W)=\W_n^{-1}(\pi, V)=(\swr(\pi), \swr(\nw^{-1}(V)))$. We will show that if $P\nleq_T Q$ then $W$ is an invalid hook configuration in $\tau$, and that if $P\leq_TQ$ then the preimage of each hook in the valid hook configuration $(\pi, V)$ under $\W_n$ is a distinct hook on $\tau$ satisfying the conditions of Proposition \ref{prop:new VHC}.

\begin{prop}\label{prop:tamari properties}
As before, denote our Motzkin paths $P\leq_CQ$ by $P=X_1D^{\gamma_1}\cdots X_\ell D^{\gamma_\ell}$ and $Q=X_1D^{\gamma'_1}\cdots X_\ell D^{\gamma'_\ell}$.
Assume $P\nleq_TQ$ and fix an index $i\in[\ell]$ for which $\lng_i(P)>\lng_i(Q)$. Then there is $k\geq 0$ such that
\begin{enumerate}[label=(\roman*)]
\item $\gamma_i+\cdots+\gamma_{i+k}<\gamma'_i+\cdots+\gamma'_{i+k}$, and
\item For all $j$ with $0\leq j\leq k$, we have $\gamma_i+\cdots+\gamma_{i+j}<\delta(X_i)+\cdots+\delta(X_{i+j})$,
\end{enumerate}
where $\delta(s)$ represents the $y$-displacement of a step $s$ of a Motzkin path as in Definition \ref{def:y-displacement}.
\end{prop}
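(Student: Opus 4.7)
The hypothesis $\lng_i(P) > \lng_i(Q) \geq 1$ forces $X_i = U$, since $\lng_j = 1$ whenever $X_j = E$. I would introduce the cumulative quantities
\[
\alpha_j := \delta(X_i) + \cdots + \delta(X_{i+j}),\qquad \Gamma_j^P := \gamma_i + \cdots + \gamma_{i+j},\qquad h_j^P := \alpha_j - \Gamma_j^P,
\]
and analogous $\Gamma_j^Q, h_j^Q$ (with the convention $\Gamma_{-1}^P = \Gamma_{-1}^Q := 0$). Geometrically, $h_j^P$ is the height of $P$ at the end of the block $X_{i+j}D^{\gamma_{i+j}}$, measured relative to the height just before $X_i$. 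Let $k_P$ (resp.\ $k_Q$) be the smallest $k \geq 0$ with $h_k^P \leq 0$ (resp.\ $h_k^Q \leq 0$); both are well-defined because $P$ and $Q$ must eventually return to the height preceding $X_i$.

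The heart of the argument is the identity
\[
\lng_i(P) \;=\; k_P + \alpha_{k_P} + 1.
\]
By minimality of $k_P$, the heights $h_0^P, \ldots, h_{k_P-1}^P$ are strictly positive, so the sub-path starting at $X_i$ has not yet returned to $0$ before $X_{i+k_P}$, and the minimum height within each earlier block is also positive; therefore the first return must occur inside $D^{\gamma_{i+k_P}}$ after exactly $d_P := h_{k_P-1}^P + \delta(X_{i+k_P})$ further down-steps. Summing gives $\lng_i(P) = k_P + 1 + \Gamma_{k_P-1}^P + d_P$, and substituting $h_{k_P-1}^P = \alpha_{k_P-1} - \Gamma_{k_P-1}^P$ telescopes out the $\Gamma_{k_P-1}^P$ term. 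The boundary $k_P = 0$ is handled by $\Gamma_{-1}^P = 0$.

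Because $P$ and $Q$ share their class, the $\alpha_j$'s are common to both, and $k \mapsto k + \alpha_k + 1$ is strictly increasing in $k$, so $\lng_i(P) > \lng_i(Q)$ is equivalent to $k_P > k_Q$. I then take $k := k_Q$. For every $0 \leq j \leq k$ we have $j \leq k_Q < k_P$, so minimality of $k_P$ forces $h_j^P > 0$, which is exactly condition (ii) since $h_j^P = \alpha_j - \Gamma_j^P$. Condition (i) follows from $h_k^P > 0 \geq h_k^Q$ by rearranging: $\Gamma_k^P = \alpha_k - h_k^P < \alpha_k - h_k^Q = \Gamma_k^Q$.

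The main obstacle will be the clean derivation of the formula $\lng_i(P) = k_P + \alpha_{k_P} + 1$, since one must verify that the first return to height $0$ indeed lands inside the $D^{\gamma_{i+k_P}}$ block at the predicted offset and handle the $k_P = 0$ boundary gracefully. Once the identity is in place, the rest of the proposition reduces to the short comparison above.
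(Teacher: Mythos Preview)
Your proof is correct and follows essentially the same approach as the paper: both choose $k$ to be the block index in which $Q$'s shortest Motzkin subpath starting at $X_i$ terminates (your $k_Q$ is exactly the paper's $k$), and both use the count $\lng_i(Q)=k+1+\alpha_k$. The only cosmetic difference is that you package this as an explicit formula $\lng_i(\cdot)=k_{\,\cdot}+\alpha_{k_{\,\cdot}}+1$ and deduce $k_P>k_Q$ from strict monotonicity, whereas the paper argues (ii) directly by contradiction (if some $\Gamma_j^P\geq\alpha_j$ with $j\leq k$, then $P$ would have a Motzkin prefix of length at most $k+1+\alpha_k=\lng_i(Q)$); the content is the same.
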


\begin{proof}
Let the subpath of $Q$ of length $\lng_i(Q)$ starting at $X_i$ be $Q^{(i)}=X_iD^{\gamma'_i}X_{i+1}D^{\gamma'_{i+1}}\cdots X_{i+k}D^{\gamma'_{i+k}-\eta}$ for appropriate $k,\eta\geq 0$, and let $\alpha=\delta(X_i)+\cdots+\delta(X_{i+k})$ be the number of $U$'s in $Q^{(i)}$. Because $Q^{(i)}$ is Motzkin,  $\alpha$ is also the number of $D$'s in $Q^{(i)}$, so $\alpha=\gamma'_i+\cdots+\gamma'_{i+k}-\eta\leq\gamma'_i+\cdots+\gamma'_{i+k}$. Moreover, $|Q^{(i)}|=k+1+\alpha$ because $k+1$ is the number of $U$ or $E$ steps in $Q^{(i)}$.

Now, choose $j$ with $0\leq j\leq k$ and set $\beta=\delta(X_i)+\cdots+\delta(X_{i+j})$. If $\gamma_i+\cdots+\gamma_{i+j}\geq\beta$ then the subpath $P'=X_iD^{\gamma_i}\cdots X_{i+j}D^{\gamma_{i+j}}$ would end at or below the horizontal and therefore would have a Motzkin prefix $P^{(i)}$. The number of $D$'s in $P'$ would be at most $\beta$, which is the number of $D$'s in $P$, so $|P^{(i)}|\leq|P'|\leq j+1+\beta$. However, $j\leq k$ and $\beta\leq\alpha$, so $|P^{(i)}|\leq|Q^{(i)}|$, which would imply the contradiction $\lng_i(P)\leq\lng_i(Q)$. Therefore, $\gamma_i+\cdots+\gamma_{i+j}<\beta=\delta(X_i)+\cdots+\delta(X_{i+j})$ for all $0\leq j\leq k$, proving (ii).

In the special case $j=k$, this yields $\gamma_i+\cdots+\gamma_{i+k}<\alpha\leq\gamma'_i+\cdots+\gamma'_{i+k}$,  proving (i).
\end{proof}

\begin{theorem}\label{thm:images are tamari}
Let $(P,Q)=\widehat\LL_n(\pi,V)$ as before. If $P\nleq_TQ$ then $\W_n^{-1}(\pi,V)=(\tau,W)$ is not a valid hook configuration.
\end{theorem}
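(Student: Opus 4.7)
The plan is to extract from Proposition \ref{prop:tamari properties} a single point $w \in W$ that cannot serve as the northeast endpoint of any valid hook in $\tau$, thereby forcing $(\tau, W)$ to fail the bijection requirement in Proposition \ref{prop:new VHC} and hence not be a VHC.

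First I apply Proposition \ref{prop:tamari properties} to obtain indices $i$ and $k$ witnessing $\lng_i(P) > \lng_i(Q)$ together with conditions (i) and (ii). Taking $j = 0$ in (ii) forces $\gamma_i = 0$ and $X_i = U$; thus $\R_{i-1} \in V$ and the left-to-right maxima $\R_i, \R_{i-1}$ occupy adjacent positions in $\pi$. I then set $A := \nw^{-1}(\R_{i-1})$. Using the characterization $\s(\R_{i-1}) = \{\R_{i-1}\} \cup \Gamma'_{i-1}$ developed in Section \ref{section:swl}, $A$ is the rightmost element of that stripe; it sits at some position $\geq p_{i-1}$ in $\pi$ and has height $h_A \in (h_i, h_{i-1}]$. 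Let $w := \swr(A) \in W$, of height $h_A$ in $\tau$.

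The heart of the argument is a position analysis in $\tau$. I would show that every position of $\tau$ strictly less than $\mathrm{pos}(w)$ carries a value less than $h_A$, and that these values form a strictly ascending sequence. Once this is established, no such position is a descent top of $\tau$, so $w$ has no candidate southwest partner, and Proposition \ref{prop:new VHC}(i) fails for any attempted bijection from descent tops of $\tau$ to $W$.

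To set up this position analysis I would induct on the recursive decomposition $\swr(\pi) = \swl(A_0), \swr(B_0), C_0$ of Fact \ref{fact:draw swl}. The Proposition \ref{prop:tamari properties} hypotheses cascade through the levels of this recursion: the equality $\gamma_i = 0$ forces $\R_{i-1}$ and $\R_i$ to be co-located within a single upper-left subregion of some recursive split, and the successive strict inequalities in (ii) for $j = 1, \ldots, k$ together with 312-avoidance of $\pi$ pin down the relative placement of $\R_i, \R_{i+1}, \ldots, \R_{i+k}$ and the rightmost-representative structure of their $\nw$-stripes. After applying $\swl$ inside this subregion, the images of these LR maxima are deposited at the initial positions of $\tau$ as an ascending chain of left-to-right maxima of $\tau$, and $w$ is among them; no other points of $\pi$ intrude into positions less than $\mathrm{pos}(w)$.

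The main obstacle I foresee is carrying out this bookkeeping through the nested recursion: one must verify at each inductive step that no extraneous point of $\pi$ is routed by $\swr$ into a position less than $\mathrm{pos}(w)$, which requires carefully combining the rightmost-representative property of $\nw^{-1}$, the 312-avoidance of the sub-permutation being decomposed, and the precise numerical constraints from Proposition \ref{prop:tamari properties}.
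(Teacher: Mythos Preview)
Your central claim is false. Take $\pi = 321564879 \in \Av_9(312)$ with $V = \{\R_0,\R_1,\R_2,\R_3\} = \{(9,9),(7,8),(5,6),(4,5)\}$. One computes $(P,Q)=\widehat\LL_9(\pi,V)=(UDUDUUDD,\,UDUUDUDD)$, and $\lng_3(P)=4>2=\lng_3(Q)$, so your $i=3$ and $\R_{i-1}=\R_2=(5,6)$. Since $\Gamma'_{i-1}=\Gamma'_2=\emptyset$, we get $A=\nw^{-1}(\R_2)=(5,6)$ and $h_A=6$. Now $\tau=\swr(\pi)=856321479$ and $w=(3,6)$. The positions preceding $w$ carry the values $8,5$: the first exceeds $h_A$, and the pair is descending. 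So neither half of your ``heart of the argument'' holds, and the recursive bookkeeping you outline cannot establish it.

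The paper's proof proceeds quite differently and does not try to isolate a single bad northeast endpoint. Instead it locates a point $\beta$ lying to the right of $\R_{i-1}$ (using part (i) of Proposition~\ref{prop:tamari properties}), defines $S=V\cap\{\R_{i-1},\ldots,\R_{i+j-1}\}$ where $\beta\in\Gamma'_{i+j}$, and lets $T$ be the set of descent tops of $\pi$ lying above $\beta$ and strictly left of $\R_{i-1}$. The key step is to show, via $\swr_\beta$ and Corollary~\ref{cor:swr properties}, that every descent top of $\tau$ southwest of any point of $\swr(\nw^{-1}(S))$ must lie in $\swr(T)$; then part (ii) of Proposition~\ref{prop:tamari properties} gives $|T|\le\gamma_i+\cdots+\gamma_{i+j}<\delta(X_i)+\cdots+\delta(X_{i+j})=|S|$. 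This is a pigeonhole argument: the $|S|$ purported northeast endpoints in $W$ collectively have fewer than $|S|$ candidate southwest partners, so $(\tau,W)$ cannot be a valid hook configuration. The argument never claims that any particular point of $W$ is starved of partners, only that the whole set $S$ is.
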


\begin{proof}
We can choose $i$ and $k$ to satisfy Proposition \ref{prop:tamari properties}. For $S\subset[\ell]$, let $\Gamma_S=\bigcup_{m\in S}\Gamma_m$ and $\Gamma'_S=\bigcup_{m\in S}\Gamma'_m$. Define regions $A$, $B$, and $C$ by setting $C=\Gamma_{[i,i+k]}\cap\Gamma'_{[i,i+k]}$ and letting $A=\Gamma_{[i,i+k]}\setminus C$ and $B=\Gamma'_{[i,i+k]}\setminus C$, as depicted in Figure \ref{fig:images are tamari}. Regions marked in red are those where no point can lie.

\begin{figure}[h]\begin{center}
\begin{tikzpicture}[scale=1]
	\coordinate (R8) at (0,0);
	\coordinate (R7) at ($ (R8) + (1,1) $);
	\coordinate (R6) at ($ (R7) + (1,1) $);
	\coordinate (R5) at ($ (R6) + (1.5,1.5) $);
	\coordinate (R4) at ($ (R5) + (1,1) $);
	\coordinate (R3) at ($ (R4) + (1.5,1.5) $);
	\coordinate (R2) at ($ (R3) + (1,1) $);
	\coordinate (R1) at ($ (R2) + (1,1) $);
	\coordinate (left) at (-1.2,0);
	\coordinate(right) at ($ (R1) + (1.2,0) $);
	\coordinate (top) at ($ (R1) + (0,1.2) $);
	\coordinate (bottom) at (0,-1.2);
	
	\fill[red!30] (left |- top) -- (R1 |- top) -- (R1) -- (R1 |- R2) -- (R2) -- (R2 |- R3) -- (R3) -- (R4) -- (R4 |- R5) -- (R5) -- (R6) -- (R6 |- R7) -- (R7) -- (R7 |- R8) -- (R8) -- (left |- R8);
	\fill[green!30] (R1 |- R8) -- (R1 |- R2) -- (R2) -- (R2 |- R3) -- (R3) -- (R4) -- (R4 |- R5) -- (R5) -- (R6) -- (R6 |- R7) -- (R7) -- (R7 |- R8) -- (R8);
	\fill[blue!22] (R1 |- R2) -- (right |- R2) -- (right |- R8) -- (R1 |- R8);
	\fill[yellow!30] (R7 |- R8) -- (R7 |- bottom) -- (R1 |- bottom) -- (R1 |- R8);
	\draw ($ (R6-|R4)!.5!(R5-|R3) $) node [scale=1.5, green!30!black] {$C$};
	\draw ($ (R6-|R1)!.5!(R5-|right) $) node [scale=1.5, blue!40!black] {$B$};
	\draw ($ (R8-|R4)!.5!(bottom-|R3) $) node [scale=1.5,yellow!30!black] {$A$};
	
	\fill ($ (R3)!.5!(R4) $) circle (.32mm) +(.12,.12) circle(.32mm) +(-.12,-.12) circle(.32mm);
	\fill ($ (R5)!.5!(R6) $) circle (.32mm) +(.12,.12) circle(.32mm) +(-.12,-.12) circle(.32mm);
	\draw[dashed] (top -| R8) -- (bottom -| R8);
	\draw[dashed] (top -| R7) -- (bottom -| R7);
	\draw[dashed] (top -| R6) -- (bottom -| R6);
	\draw[dashed] (top -| R5) -- (bottom -| R5);
	\draw[dashed] (top -| R4) -- (bottom -| R4);
	\draw[dashed] (top -| R3) -- (bottom -| R3);
	\draw[dashed] (top -| R2) -- (bottom -| R2);
	\draw[dashed] (top -| R1) -- (bottom -| R1);
	\node (g) at ($ (top -| R1)!.5!(top -| R2) $)[above] {$\Gamma_i$};
	\node (g1) at ($ (top -| R2)!.5!(top -| R3) $)[above] {$\Gamma_{i+1}$};
	\node (g2) at ($ (top -| R3)!.5!(top -| R4) $)[above] {$\cdots$};
	\node (g3) at ($ (top -| R4)!.5!(top -| R5) $)[above] {$\Gamma_{i+j+1}$};
	\node (g4) at ($ (top -| R5)!.5!(top -| R6) $)[above] {$\cdots$};
	\node (g5) at ($ (top -| R6)!.5!(top -| R7) $)[above] {$\Gamma_{i+k}$};
	\node (g6) at ($ (top -| R7)!.5!(top -| R8) $)[above] {$\Gamma_{i+k+1}$};
	\draw[dashed] (left |- R8) -- (right |- R8);
	\draw[dashed] (left |- R7) -- (right |- R7);
	\draw[dashed] (left |- R6) -- (right |- R6);
	\draw[dashed] (left |- R5) -- (right |- R5);
	\draw[dashed] (left |- R4) -- (right |- R4);
	\draw[dashed] (left |- R3) -- (right |- R3);
	\draw[dashed] (left |- R2) -- (right |- R2);
	\draw[dashed] (left |- R1) -- (right |- R1);
	\node (g') at ($ (left |- R1)!.5!(left |- R2) $)[left] {$\Gamma'_{i-1}$};
	\node (g'0) at ($ (left |- R2)!.5!(left |- R3) $)[left] {$\Gamma'_i$};
	\node (g'1) at ($ (left |- R3)!.5!(left |- R4) $)[left] {$\vdots\,\,\,\,$};
	\node (g'2) at ($ (left |- R4)!.5!(left |- R5) $)[left] {$\Gamma'_{i+j}$};
	\node (g'3) at ($ (left |- R5)!.5!(left |- R6) $)[left] {$\vdots\,\,\,\,$};
	\node (g'4) at ($ (left |- R6)!.5!(left |- R7) $)[left] {$\Gamma'_{i+k-1}$};
	\node (g'5) at ($ (left |- R7)!.5!(left |- R8) $)[left] {$\Gamma'_{i+k}$};
	\fill (R1) circle(.7mm) node[above left] {$\R_{i-1}$};
	\fill (R2) circle(.7mm) node[above left] {$\R_{i}$};
	\fill (R3) circle(.7mm) node[above left] {$\R_{i+1}$};
	\fill (R4) circle(.7mm) node[above left] {$\R_{i+j}$};
	\fill (R5) circle(.7mm) node[above left] {$\R_{i+j+1}$};
	\fill (R6) circle(.7mm) node[above left] {$\R_{i+k-1}$};
	\fill (R7) circle(.7mm) node[above left] {$\R_{i+k}$};
	\fill (R8) circle(.7mm) node[above left] {$\R_{i+k+1}$};
	\fill ($ (R5 -| R1)!.4!(R4 -| R1) +(.5,0)$) circle (.7mm) node[above right] {$\beta$};
\end{tikzpicture}
\caption{The plot of $\pi$.}
\label{fig:images are tamari}
\end{center}\end{figure}
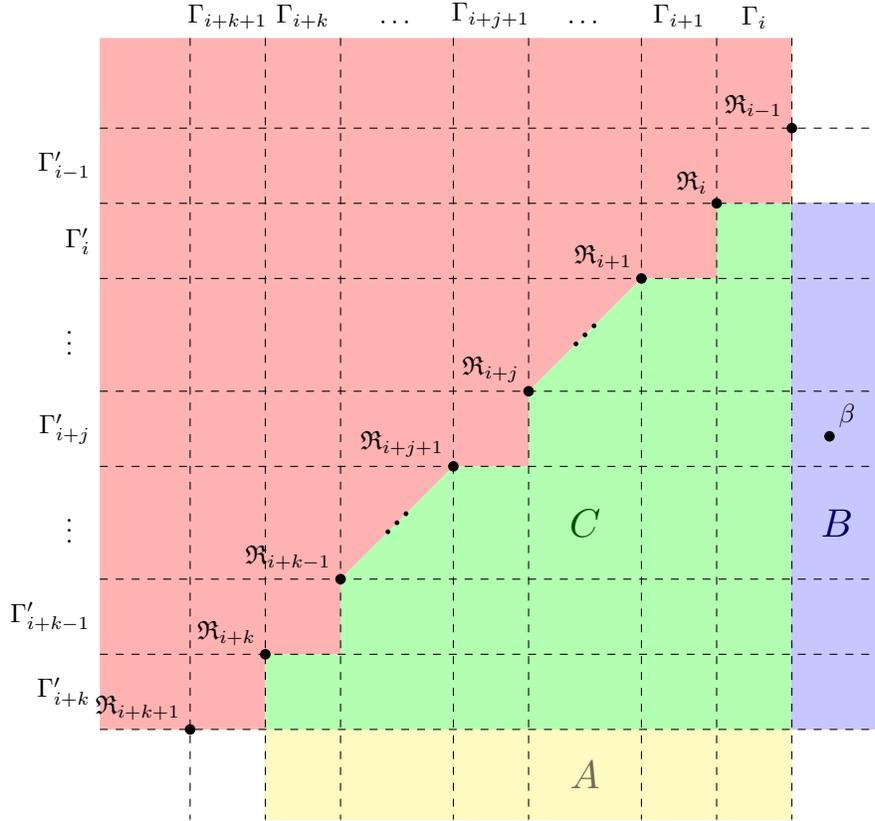

We have $|A|+|C|=\gamma_i+\cdots+\gamma_{i+k}<\gamma'_i+\cdots+\gamma'_{i+k}=|B|+|C|$ by Proposition \ref{prop:tamari properties}, so $B$ must contain at least one point, $\beta$. Choose $j$ so that $\beta\in\Gamma'_{i+j}$, as pictured in Figure \ref{fig:images are tamari}. Let $S=V\cap\{\R_{i-1},\ldots,\R_{i+j-1}\}$ be the set of northeast hook endpoints among those left-to-right maxima and choose any hook endpoint $R\in S$. If $\nw^{-1}(R)\neq R$ then $\nw^{-1}(R)$ must still be to the left of $\beta$ or else $(R,\beta,\nw^{-1}(R))$ would form a 312 in $\pi$. Let $T$ be the set of descent tops that lie above $\beta$ and strictly left of $\R_{i-1}$. Because $\nw^{-1}(R)$ is above and to the left of $\beta$, we have that $\swr_\beta(\pi)$ shifts $\nw^{-1}(R)$ to the left of all descent tops in $\pi$ except for some of the descent tops in $T$. Furthermore, every descent top in $T$ has corresponding descent bottom in $\Gamma_{[i,i+j]}$, so $|T|\leq\gamma_i+\cdots+\gamma_{i+j}$.

We will show that $W$ is not a valid hook configuration on $\tau$ by showing that  there are fewer than $|S|$ descent tops of $\tau$ that are southwest of any northeast hook endpoint in $\swr(\nw^{-1}(S))\subseteq W$. Let $R\in S$. If $R'$ is a descent top in $\pi$ southwest of $\nw^{-1}(R)$ and $\swr_\beta(R')$ is southeast of $\swr_\beta(\nw^{-1}(R))$, then by Corollary \ref{cor:swr properties}, $\swr(R')$ will be southeast of $\swr(\nw^{-1}(R))$. Furthermore, $\swr$ preserves declivities, so if $R'$ is a descent top in $\pi$ southeast of $\nw^{-1}(R)$ then $\swr(R')$ will be southeast of $\swr(\nw^{-1}(R))$. It follows that the only descent tops of $\tau$ southwest of any point in $\swr(\nw^{-1}(S))$ are elements of $\swr(T)$. However, by Proposition \ref{prop:tamari properties}, we have $|T|\leq\gamma_i+\cdots+\gamma_{i+j}<\delta(X_i)+\cdots+\delta(X_{i+j})=|S|$. Hence, there are fewer than $|S|$ descent tops of $\tau$ southwest of any point in $\swr(\nw^{-1}(S))$, and $W\supseteq S$ is an invalid hook configuration on $\tau$.
\end{proof}

To prove the opposite direction, we start with the valid hook configuration on $(\pi, V)$ and use it to prove that $(\tau, W)$ represents a valid hook configuration as well.

\begin{prop}\label{prop:length of long}
Assume that the valid hook configuration on $(\pi, V)$ contains a hook with southwest endpoint $A$ and northeast endpoint $B=\R_{i-1}$. Then $\lng_i(P)$ is equal to the horizontal distance between $A$ and $B$ in the plot of $\pi$, where $P$ is the first component of $\widehat\LL_n(\pi, V)$.
\end{prop}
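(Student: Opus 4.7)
\emph{Plan.} My approach is to exploit the natural correspondence between steps of $P$ and points of $\pi$ implicit in the construction of $\widehat\LL_n$: reading the plot of $\pi$ from right to left and skipping $\R_\ell = (1,\pi_1)$ identifies the $p$th point encountered with the step $P_p$, where $P_p$ is $U$ if the point is a northeast endpoint (a left-to-right maximum in $V$), $E$ if it is a left-to-right maximum not in $V$, and $D$ if it is a non-left-to-right maximum. Because $\pi$ avoids $312$, a short argument shows that the non-left-to-right maxima of $\pi$ coincide with its descent bottoms. In particular, $X_i$ sits at position $p_i = n - x_B + 1$ in $P$ and corresponds to $B = \R_{i-1}$.

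Set $L = x_B - x_A$. Then the correspondence identifies the window $P_{p_i}\cdots P_{p_i+L-1}$ with the points of $\pi$ at $x$-coordinates $x_B, x_B-1, \ldots, x_A+1$, the last of which is the descent bottom of $A$. To conclude $\lng_i(P) = L$, I must show that this window is itself a Motzkin sub-path and that no proper prefix of it is. For this I appeal to the balanced parenthesis representation of $(\pi,V)$ from the proof of VHC uniqueness: descent tops act as open parentheses, northeast endpoints as close parentheses (close listed before open at a shared $x$-coordinate), and the matching induced by the hooks is balanced. Since the hook from $A$ to $B$ pairs the open at $A$ with the close at $B$, all parens strictly between them form a balanced sub-sequence supported on $(x_A, x_B)$.

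Using the bijection ``descent top at $x \mapsto$ descent bottom at $x+1$,'' I translate open-paren counts on intervals $[x_A, y]$ into counts of $D$ steps on $[x_A+1, y+1]$, via the identification of non-left-to-right maxima with descent bottoms. Counting opens and closes across the full window shows that the numbers of $U$s and $D$s in $P_{p_i}\cdots P_{p_i+L-1}$ agree, so the window returns to the starting height; restricting to suffixes of the internal balanced sub-sequence on $(x_A, x_B)$ shows that the running height stays strictly positive at all intermediate lengths, using the fact that any suffix of a balanced paren sequence has a non-negative excess of closes over opens, together with the observation that an open paren at some $x-1 \in (x_A, x_B)$ must be matched by a close paren somewhere in $[x, x_B)$.

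The main obstacle will be careful bookkeeping at the endpoints $x_A$ and $x_B$, either of which may simultaneously be a descent top and a northeast endpoint (contributing both an open and a close paren at the same $x$-coordinate). These edge contributions must be tracked precisely so that the counts balance exactly at $x = x_A+1$ while remaining strictly positive at all intermediate $x \in (x_A+1, x_B]$. Once these cases are resolved, the conclusion $\lng_i(P) = x_B - x_A$ follows immediately.
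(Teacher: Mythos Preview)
Your proposal is correct and follows essentially the same approach as the paper. Both arguments rest on the right-to-left correspondence between points of $\pi$ and steps of $P$ (with northeast endpoints giving $U$, descent bottoms giving $D$, and other points giving $E$), and both use the non-crossing structure of the hooks to show that the window from $B$ down to the descent bottom of $A$ is the minimal Motzkin sub-path. The only difference is packaging: the paper pairs each hook's northeast endpoint directly with its \emph{descent bottom}, so the ``both or neither lie in the window'' observation immediately controls the $U$/$D$ counts; you instead pair northeast endpoints with \emph{descent tops} via the balanced-parenthesis structure and then apply the shift $x\mapsto x+1$ to convert descent-top counts into descent-bottom counts. This extra translation is exactly what forces the endpoint bookkeeping you flag as the ``main obstacle,'' whereas the paper's pairing sidesteps it. Either route works; the paper's is slightly shorter.
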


\begin{proof}
We can construct $P$ by listing the points of $\pi$ from right to left, and mapping northeast hook endpoints to $U$, descent bottoms to $D$, and all other points to $E$. In this representation, $B=\R_{i-1}$ is the point corresponding to $X_i$. Let $C$ be the descent bottom corresponding to $A$. For every other hook, either both or neither of its northeast hook endpoint and descent bottom lie between $C$ and $B$ horizontally. It follows that, listing the points of $\pi$ from right to left starting at $B$ and ending at some point before $C$, there will be more northeast hook endpoints listed than descent bottoms, so the corresponding substring of $P$ is not Motzkin. However, if we list the points of $\pi$ from right to left starting at $B$ and ending at $C$, we will have listed exactly the same number of northeast hook endpoints and descent bottoms, so the corresponding substring of $P$ is a Motzkin subpath. It follows that $\lng_i(P)$ is the number of points lying horizontally between $B$ and $C$, inclusive, which is exactly the horizontal distance between $A$ and $B$ in the plot of $\pi$.
\end{proof}

Our end goal is to show that a hook in the valid hook configuration $(\pi, V)$ maps to a hook in the preimage $(\tau,W)$, which then allows us to apply Proposition \ref{prop:new VHC}. To do this, we study the region of points which swap the preimages of $A$ and $B$ in $\tau$.

\begin{definition}\label{def:pivot point}Let $\H$ be a hook in the plot of a permutation $\pi$ with southwest and northeast endpoints $A$ and $B$, respectively. A \emph{pivot point} of $\H$ is a point $\rho$ in the plot of $\pi$ such $(A,\nw^{-1}(B), \rho)$ matches the pattern 132.\end{definition}
It is clear from Definition \ref{def:pivot point} that the existence of no pivot point of $\H$ is equivalent to the preimage of $\H$ being a hook in $\tau$, by Corollary \ref{cor:swr properties}.

\begin{prop}\label{prop:no pivot points}
Let $(P,Q)=\widehat\LL_n(\pi,V)$ and assume $P\leq_T Q$. Let $\H$ be a hook in the valid hook configuration on $(\pi, V)$ with southwest endpoint $A$, northeast endpoint $B$, and descent bottom $C$. Then $\pi$ contains no pivot points that lie between $B$ and $C$ vertically and to the right of $\nw^{-1}(B)$.
\end{prop}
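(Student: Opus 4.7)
I would argue by contrapositive. Suppose a pivot point $\rho$ of $\H$ exists in the region described; I will derive that $\lng_i(P) > \lng_i(Q)$ for the index $i$ with $\R_{i-1} = B$, contradicting $P \leq_T Q$.

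First I would extract the geometry of $\rho$. By the pivot-point condition, $\rho_x > \nw^{-1}(B)_x \geq B_x$ and $A_y < \rho_y < \nw^{-1}(B)_y \leq B_y$. Since $\rho$ lies strictly to the right of $\nw^{-1}(B)$, the rightmost element of $\s(B)$, we have $\nw(\rho) \neq B$; because $\nw(\rho)$ is the leftmost left-to-right maximum at or to the left of $\rho$ with height at least $\rho_y$, this forces $\rho_y < \R_{i,y}$. Combined with Proposition~\ref{prop:length of long}, which gives $\lng_i(P) = B_x - A_x$, the question reduces to bounding $\lng_i(Q)$ strictly below $B_x - A_x$.

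The main step is exhibiting a short Motzkin sub-path of $Q$ starting at $X_i$. Let $k \geq 0$ be the unique index with $\R_{i+k,y} > \rho_y > \R_{i+k+1,y}$, so that $\rho \in \Gamma'_{i+k}$. I would establish the partial-count inequalities
\[
\gamma_i + \gamma_{i+1} + \cdots + \gamma_{i+j} < \delta(X_i) + \cdots + \delta(X_{i+j})
\quad\text{for all }0 \leq j \leq k,
\]
together with the strict inequality $\gamma_i + \cdots + \gamma_{i+k} < \gamma'_i + \cdots + \gamma'_{i+k}$ at $j = k$. Both facts use the VHC's hook-nonpierceability (condition (ii) of Definition~\ref{def:VHC}), which bounds how many points each $\Gamma_{i+j}$ can contain in terms of the accumulated hook openings $\delta(X_i) + \cdots + \delta(X_{i+j})$, together with the observation that $\rho$ itself contributes a point to $\Gamma'_{i+k}$ but not to any $\Gamma_{i+j}$ with $0 \leq j \leq k$ (since $\rho \in \Gamma_m$ for some $m \leq i - 1$, which lies outside the range $[i, i+k]$). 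Running the argument of Proposition~\ref{prop:tamari properties} in reverse then yields $\lng_i(P) > \lng_i(Q)$, the desired contradiction.

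The principal obstacle is the careful bookkeeping, particularly the verification of the intermediate inequalities for $0 \leq j < k$. This requires tracking exactly which points lie in the horizontal gaps $\Gamma_{i+j}$ versus the vertical gaps $\Gamma'_{i+j}$ and using the hook structure of $\H$ to bound both. The pivot point $\rho$ plays a crucial dual role: it sits in $\Gamma_m$ with $m \leq i-1$ horizontally (outside the index range relevant to $\lng_i(P)$) but in $\Gamma'_{i+k}$ vertically (inside the range relevant to $\lng_i(Q)$), and this asymmetry is precisely what drives the strict inequality at $j = k$.
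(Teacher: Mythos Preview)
Your plan has a genuine gap: the step ``run Proposition~\ref{prop:tamari properties} in reverse'' does not work. That proposition is a one-way implication, and its converse is false. Concretely, take $P=UDUUEDD$ and $Q=UUUDEDD$; these satisfy $P\leq_C Q$, and at $i=2$, $k=1$ your conditions (i) and (ii) both hold ($\gamma_2+\gamma_3=0<1=\gamma'_2+\gamma'_3$ and $\gamma_2+\cdots+\gamma_{2+j}<\delta(X_2)+\cdots+\delta(X_{2+j})$ for $j=0,1$), yet $\lng_2(P)=\lng_2(Q)=5$. Worse, this pair actually arises from a valid hook configuration with a pivot point exactly where you assume one: the permutation $\pi=32156748$ has $\widehat\LL_8(\pi,V)=(P,Q)$ for $V=\{(5,6),(6,7),(8,8)\}$, the hook $\H$ from $A=(1,3)$ to $B=\R_1=(6,7)$ has $\nw^{-1}(B)=B$, and $\rho=(7,4)$ is a pivot point lying vertically between $B$ and $C=(2,2)$ and to the right of $\nw^{-1}(B)$. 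So even in the geometric setting you cannot conclude $\lng_i(P)>\lng_i(Q)$ at the index $i$ attached to the given hook $\H$.

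The paper avoids this by \emph{not} working with the given hook. It first takes $\rho$ to be the highest pivot point occurring for any hook, and then replaces $\H$ by the \emph{most sheltered} hook for which $\rho$ is a pivot point (in the example above this switches to the hook from $(2,2)$ to $(5,6)$, giving $i=3$, where indeed $\lng_3(P)=3>2=\lng_3(Q)$). With these extremal choices the paper can directly count: every hook whose northeast endpoint lies between $B$ and $\rho$ in the ``almost-descending'' order has its descent bottom there too, so the corresponding segment of $Q$ is balanced, and one compares its length to $\lng_i(P)$ via Proposition~\ref{prop:length of long}. Your approach, which fixes $\H$ from the outset and tries to recover the inequalities of Proposition~\ref{prop:tamari properties}, omits exactly this sheltering argument, and without it the bound on $\lng_i(Q)$ cannot be obtained.
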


\begin{proof}
Assume that some hooks of the valid hook configuration had pivot points as described. Let $\rho$ be the highest such pivot point. We say hook $\H_1$ \emph{shelters} hook $\H_2$ if both endpoints of $\H_2$ lie under $\H_1$. If two hooks both have $\rho$ as a pivot point, it is clear that one hook must shelter the other, and we can pick $\H$ to be the most sheltered hook for which $\rho$ is a pivot point. Let $A$, $B$, and $C$ be the hook endpoints and descent bottom of $\H$ as described. Choose indices $i,j,k\in[\ell]$ such that $B=\R_{i-1}$, $A\in\{\R_j\}\cup \Gamma'_j$, and $\rho\in\Gamma'_k$, as shown in Figure \ref{fig:pivot point}.
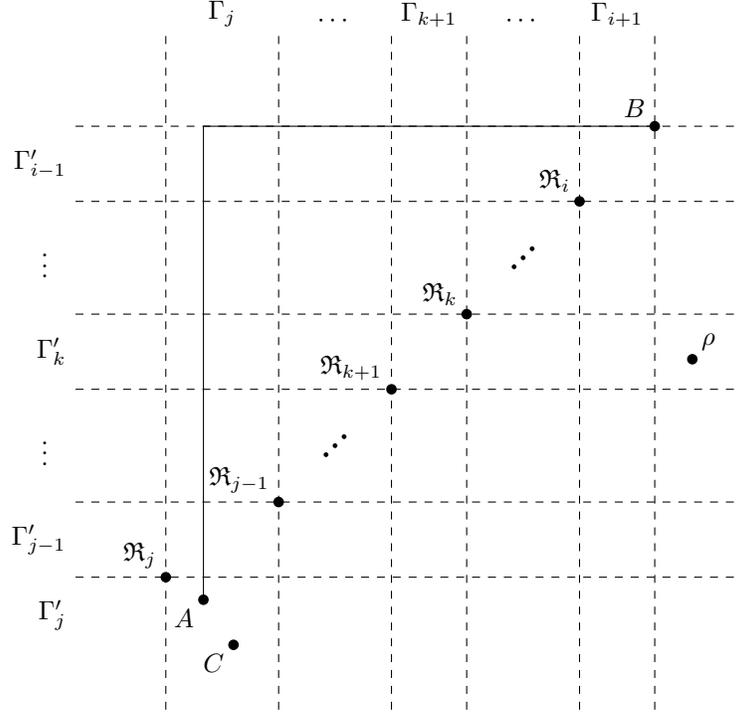
\begin{figure}[h]\begin{center}
\begin{tikzpicture}[scale=1]
	\coordinate (R8) at (0,0);
	\coordinate (R7) at ($ (R8) + (1,1) $);
	\coordinate (A) at ($ (R7) + (.5,-.3) $);
	\coordinate (C) at ($ (A) + (.4,-.6) $);
	\coordinate (R6) at ($ (R7) + (1.5,1) $);
	\coordinate (R5) at ($ (R6) + (1.5,1.5) $);
	\coordinate (R4) at ($ (R5) + (1,1) $);
	\coordinate (R3) at ($ (R4) + (1.5,1.5) $);
	\coordinate (R2) at ($ (R3) + (1,1) $);
	\coordinate (B) at (R2);
	\coordinate (left) at (-.2,0);
	\coordinate(right) at ($ (R2) + (1.2,0) $);
	\coordinate (top) at ($ (R2) + (0,1.2) $);
	\coordinate (bottom) at (0,-.8);
	
	\draw (A) -- (A|-B) -- (B);
		
	\fill ($ (R3)!.5!(R4) $) circle (.32mm) +(.12,.12) circle(.32mm) +(-.12,-.12) circle(.32mm);
	\fill ($ (R5)!.5!(R6) $) circle (.32mm) +(.12,.12) circle(.32mm) +(-.12,-.12) circle(.32mm);
	\draw[dashed] (top -| R7) -- (bottom -| R7);
	\draw[dashed] (top -| R6) -- (bottom -| R6);
	\draw[dashed] (top -| R5) -- (bottom -| R5);
	\draw[dashed] (top -| R4) -- (bottom -| R4);
	\draw[dashed] (top -| R3) -- (bottom -| R3);
	\draw[dashed] (top -| R2) -- (bottom -| R2);
	\node (g1) at ($ (top -| R2)!.5!(top -| R3) $)[above] {$\Gamma_{i+1}$};
	\node (g2) at ($ (top -| R3)!.5!(top -| R4) $)[above] {$\cdots$};
	\node (g3) at ($ (top -| R4)!.5!(top -| R5) $)[above] {$\Gamma_{k+1}$};
	\node (g4) at ($ (top -| R5)!.5!(top -| R6) $)[above] {$\cdots$};
	\node (g5) at ($ (top -| R6)!.5!(top -| R7) $)[above] {$\Gamma_{j}$};
	\draw[dashed] (left |- R7) -- (right |- R7);
	\draw[dashed] (left |- R6) -- (right |- R6);
	\draw[dashed] (left |- R5) -- (right |- R5);
	\draw[dashed] (left |- R4) -- (right |- R4);
	\draw[dashed] (left |- R3) -- (right |- R3);
	\draw[dashed] (left |- R2) -- (right |- R2);
	\node (g'0) at ($ (left |- R2)!.5!(left |- R3) $)[left] {$\Gamma'_{i-1}$};
	\node (g'1) at ($ (left |- R3)!.5!(left |- R4) $)[left] {$\vdots\,\,\,\,$};
	\node (g'2) at ($ (left |- R4)!.5!(left |- R5) $)[left] {$\Gamma'_{k}$};
	\node (g'3) at ($ (left |- R5)!.5!(left |- R6) $)[left] {$\vdots\,\,\,\,$};
	\node (g'4) at ($ (left |- R6)!.5!(left |- R7) $)[left] {$\Gamma'_{j-1}$};
	\node (g'5) at ($ (left |- R7)!.5!(left |- R8) $)[left] {$\Gamma'_{j}$};
	\fill (A) circle(.7mm) node[below left] {$A$};
	\fill (C) circle(.7mm) node[below left] {$C$};
	\fill (R2) circle(.7mm) node[above left] {$B$};
	\fill (R3) circle(.7mm) node[above left] {$\R_{i}$};
	\fill (R4) circle(.7mm) node[above left] {$\R_{k}$};
	\fill (R5) circle(.7mm) node[above left] {$\R_{k+1}$};
	\fill (R6) circle(.7mm) node[above left] {$\R_{j-1}$};
	\fill (R7) circle(.7mm) node[above left] {$\R_{j}$};
	\fill ($ (R5 -| R2)!.4!(R4 -| R2) +(.5,0)$) circle (.7mm) node[above right] {$\rho$};
\end{tikzpicture}
\caption{A hook in the plot of $\pi$ with a pivot point.}
\label{fig:pivot point}
\end{center}\end{figure}
Note that $k\geq i$ because there are no points in $\Gamma'_{i-1}$ to the right of $\nw^{-1}(B)$ by definition of $\nw^{-1}$. Furthermore, $k<j$ because if $\rho\in\Gamma'_j$ were above $A$ then $(\R_j,A,\rho)$ would match the pattern 312.

We can construct $Q$ in a manner similar to the construction of $P$ in Proposition \ref{prop:length of long}. Instead of listing the points of $\pi$ from right to left as with $P$, we list them in an almost-descending order: $\R_0$, points of $\Gamma'_1$ from highest to lowest, $\R_1$, points of $\Gamma'_2$ from highest to lowest, $\R_2$, and so on. Then, we convert this sequence of points into the Motzkin path $Q$ by mapping northeast hook endpoints to $U$, descent bottoms to $D$, and all other points to $E$ as before. Consider the subsequence of $Q$ that begins with the $U$ corresponding to $B$ and ends with the $D$ corresponding to $\rho$. Because $\rho$ is the highest pivot point of $\H$, all points in the almost-descending order between $B$ and $\rho$ are sheltered by $\H$. Furthermore, we claim that a hook $\H'$ has northeast endpoint $B'$ between $B$ and $\rho$ in this order if and only if its descent bottom $C'$ also lies between $B$ and $\rho$. Clearly if $C'$ were between $B$ and $\rho$ then $B'$ would be a left-to-right maximum $\R_a$ with $a>k$, and because $\H$ shelters $C'$, $\H$ and $\H'$ would intersect if $a\geq i-1$. Conversely, if $B'$ were a left-to-right maximum strictly between $\R_k$ and $\R_{i-1}$ then the southwest endpoint $A'$ of $\H'$ would also be above $\rho$, or else $\H'$ would be a hook more sheltered than $\H$ for which $\rho$ was a pivot point. Then $C'$ must also be above $\rho$, or else $(A', C', \rho)$ would match the pattern 312.

It follows that the subsequence of $Q$ corresponding to the points from $B$ to $\rho$ in almost-descending order contains as many $U$'s as $D$'s: one $U$ and one $D$ for each hook sheltered by $\H$ above $\rho$ and one $U$ from $B$ and one $D$ from $\rho$. However, every point of this subsequence except $\rho$ is also horizontally between $B$ and $C$, inclusive, and by Proposition \ref{prop:length of long} also contributes to $\lng_i(P)$. Moreover, the points $C$ and $\R_{j-1}$ both contribute to $\lng_i(P)$ but not to $\lng_i(Q)$. It follows that $\lng_i(Q)-1\leq\lng_i(P)-2$, which contradicts the assumption that $P\leq_T Q$. Hence, our initial assumption that there was a pivot point is false.
\end{proof}

\begin{cor}
The composition $\widehat{\LL}_n\circ \W_n$ is surjective onto the Tamari intervals $\Int(\M_{n-1}^T)$.
\end{cor}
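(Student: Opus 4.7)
The plan has two parts: establishing containment of the image in $\Int(\M_{n-1}^T)$ and then establishing surjectivity onto it. The containment is immediate from the contrapositive of Theorem \ref{thm:images are tamari}: if $(\tau, V) \in \VHC(\Av_n(132))$ and $(\pi, W) = \W_n(\tau, V)$, then $\W_n^{-1}(\pi, W) = (\tau, V)$ is a valid hook configuration, so Theorem \ref{thm:images are tamari} forces $(P, Q) = \widehat\LL_n(\pi, W)$ to satisfy $P \leq_T Q$.

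For surjectivity, fix $(P, Q) \in \Int(\M_{n-1}^T) \subseteq \Int(\M_{n-1}^C)$ (the inclusion holds because $\leq_T$ refines $\leq_C$). Since $\widehat\LL_n$ is a bijection, there is a unique $(\pi, V) \in \VHC(\Av_n(312))$ with $\widehat\LL_n(\pi, V) = (P, Q)$. Set $(\tau, W) = \W_n^{-1}(\pi, V) = (\swr(\pi), \swr(\nw^{-1}(V)))$. Because $\swr$ maps $\Av_n(312)$ onto $\Av_n(132)$, we have $\tau \in \Av_n(132)$. Once I show $(\tau, W) \in \VHC(\Av_n(132))$, the identities $\swl \circ \swr = \id$ and $\nw \circ \nw^{-1} = \id$ (on left-to-right maxima) give $\W_n(\tau, W) = (\pi, V)$, hence $\widehat\LL_n \circ \W_n(\tau, W) = (P, Q)$.

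To show $(\tau, W) \in \VHC(\Av_n(132))$, I would invoke Proposition \ref{prop:new VHC}. For each hook in $(\pi, V)$ with southwest endpoint $A$ and northeast endpoint $B$, associate in $\tau$ the pair $(\swr(A), \swr(\nw^{-1}(B)))$. Lemma \ref{lemma:descent tops} ensures $\swr(A)$ is a descent top of $\tau$, and a cardinality count using Theorem \ref{thm:stripe structure} together with the fact that distinct northeast endpoints $B \in V$ yield distinct $\nw^{-1}(B)$ shows this defines a bijection from descent tops of $\tau$ onto $W$. Condition (i) of Proposition \ref{prop:new VHC} --- that the pair is a valid southwest-northeast pair in $\tau$ --- follows directly from Proposition \ref{prop:no pivot points}: since $P \leq_T Q$ rules out all pivot points of each hook $\H$, by the remark after Definition \ref{def:pivot point}, $\swr$ does not swap $A$ with $\nw^{-1}(B)$, and $\swr$ preserves acclivities.

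The main obstacle is verifying condition (ii) of Proposition \ref{prop:new VHC} --- that no point of $\tau$ lies strictly between $\swr(A)$ and $\swr(\nw^{-1}(B))$ in position while exceeding $\pi_{\nw^{-1}(B)}$ in height. Any such violating point would pull back under $\swl$ to a point in $\pi$ whose placement, analyzed via the descending structure of $\nw$-stripes (Proposition \ref{prop:stripes}) together with the absence of pivot points guaranteed by Proposition \ref{prop:no pivot points}, would either violate condition (ii) of Definition \ref{def:VHC} on the original hook $\H$ or itself serve as a pivot point in the forbidden region. Once (ii) is confirmed, Proposition \ref{prop:new VHC} delivers $(\tau, W) \in \VHC(\Av_n(132))$, completing the proof.
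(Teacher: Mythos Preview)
Your approach is exactly the paper's: use the contrapositive of Theorem \ref{thm:images are tamari} for the image containment, and for surjectivity pull back a Tamari interval through $\widehat\LL_n$, apply $\W_n^{-1}$, and then invoke Proposition \ref{prop:no pivot points} together with Corollary \ref{cor:swr properties} to feed Proposition \ref{prop:new VHC}.

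The only misstep is your final paragraph. You call condition (ii) of Proposition \ref{prop:new VHC} the ``main obstacle'' and sketch a pull-back argument through $\swl$ that is never completed. In fact no work is required here: condition (ii) is automatic once $\tau\in\Av_n(132)$. If some point $R$ of $\tau$ sat horizontally between $\swr(A)$ and $\swr(\nw^{-1}(B))$ with height exceeding that of $\swr(\nw^{-1}(B))$, then since you have already established (via the absence of pivot points) that $\swr(A)$ lies strictly southwest of $\swr(\nw^{-1}(B))$, the triple $(\swr(A),\,R,\,\swr(\nw^{-1}(B)))$ would match the pattern $132$ in $\tau$, a contradiction. Accordingly, the paper's proof does not verify (ii) separately; it simply observes that each hook of $(\pi,V)$ is carried to a hook in $(\tau,W)$ and then applies Proposition \ref{prop:new VHC}.
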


\begin{proof}
The contrapositive of Theorem \ref{thm:images are tamari} shows that the composition does in fact take every element of $\VHC(\Av_n(132))$ to a Motzkin-Tamari interval. To show that any Motzkin-Tamari interval $(P,Q)\in\Int(\M_{n-1}^T)$ has a preimage under the composition, we let $(\pi, V)\in\VHC(\Av_n(312))$ be the preimage of $(P,Q)$ under $\widehat{\LL}_n$ and take $(\tau, W)=\W_n^{-1}(\pi,V)$. By Proposition \ref{prop:no pivot points} and Corollary \ref{cor:swr properties}, any hook $\H$ as in Proposition \ref{prop:no pivot points} is mapped to a hook in $(\tau, W)$, as $\swr$ will preserve the relative orders of $A$ and $\swr(B)$. We can then apply Proposition \ref{prop:new VHC} to show that $(\tau,W)$ is a valid hook configuration.
\end{proof}

\section{Future Work}\label{section:future work}

\subsection{Reduced Valid Hook Configurations}

During initial attempts to prove Defant's conjecture about valid hook configurations on $312$-avoiding permutations (Corollary \ref{cor:count with w}), the author was led to consider the enumeration of valid hook configurations according to the number of hooks. This has already been studied to some extent in the papers \cite{hannaspaper, DefantEngenMiller, hanna}, which investigate uniquely sorted permutations. Indeed, uniquely sorted permutations in $S_n$ are essentially the same as valid hook configurations on permutations in $S_n$ that have exactly $\frac{n-1}{2}$ hooks. We wish to extend the exploration of uniquely sorted permutations with what we call ``reduced" valid hook configurations.

\begin{definition}
A valid hook configuration is \emph{reduced} if every point in the plot of the permutation is
either a hook endpoint (both southwest and northeast endpoints count) or a descent bottom. Given $S\subseteq S_n$, we write $\VHC_k(S)$ for the set of valid hook configurations on permutations in $S$ with exactly $k$ hooks. Let $\RedVHC(S)$ be the set of reduced valid hook configurations on permutations in $S$, and let $\RedVHC_k(S)=\RedVHC(S)\cap\VHC_k(S)$.  
\end{definition}

Suppose $(\pi,V)\in\VHC(\Av_n(312))$, and let $S$ be the set of indices $i\in[n]$ such that $(i,\pi_i)$ is a hook endpoint or descent bottom of $(\pi,V)$. Let $(\pi,V)\big\vert_S$ be the valid hook configuration obtained by removing the points of the plot of $\pi$ of the form $(j,\pi_j)$ with $j\in[n]\setminus S$ and then ``normalizing" the remaining points and hooks to get a valid hook configuration on a permutation in $S_{|S|}$. One can show that the map 
\begin{equation}\label{Eq1}\VHC(\Av_n(312))\to\bigcup_{r=0}^n\RedVHC(\Av_r(312))\times {[n]\choose r}
\end{equation} given by 
\[(\pi,V)\mapsto ((\pi,V)\big\vert_S,S)\] is a bijection. It follows that \[|\VHC(\Av_n(312))|=\sum_{r=0}^n{n\choose r}|\RedVHC(\Av_r(312))|.\] Combining this with Corollary \ref{cor:count with w}, one can show that \begin{equation}\label{Eq2}
|\RedVHC(\Av_n(312))|=\sum_{i=0}^n(-1)^i\mathfrak w(n-i-1),
\end{equation} where we make the convention $\mathfrak w(-1)=1$. 

For fixed $k\geq 1$, we can refine the map in \eqref{Eq1} to see that the sequence of numbers $|\VHC_k(\Av_n(312))|$ can be written as a nonnegative integer linear combination of ${n\choose 2k+1},\ldots,{n\choose 3k}$. Indeed, one can show that $\RedVHC_k(\Av_n(312))$ is only nonempty when $2k+1\leq n\leq 3k$ and that \[|\VHC_k(\Av_n(312))|=\sum_{i=1}^k{n\choose 2k+i}|\RedVHC_{k}(\Av_{2k+i}(312))|.\]
This motivates us to consider the numbers $|\RedVHC_{k}(\Av_{2k+i}(312))|$ for $1\leq i\leq k$. We can arrange these coefficients in a triangle as follows, where the $k^\text{th}$ row lists the coefficients of $|\VHC_{k}(\Av_{2k+i}(312))|$ as they range over $i$.
\[\begin{array}{rrrrrrr}
1
\\3 & 5
\\14 & 51 & 42
\\84 & 485 & 849 & 462
\\594 & 4743 & 13004 & 14819 & 6006
\\4719 & 48309 & 182311 & 322789 & 271452 & 87516
\\40898 & 511607 & 2472322 & 5999489 & 7794646 & 5182011 & 1385670
\end{array}\]

Invoking Corollary 5.1 from \cite{hannaspaper}, one can show that the first column of this table is the sequence of numbers $C_kC_{k+2}-C_{k+1}^2$ (OEIS sequence A005700), where $C_m=\frac{1}{m+1}{2m\choose m}$ denotes the $m^\text{th}$ Catalan number. Note that the row sums of this table are given by \eqref{Eq2}. The data in this table suggests some other remarkable patterns, which we state as conjectures. 

\begin{conjecture}\label{Conj1}
For every $k\geq 1$, we have \[|\RedVHC_k(\Av_{3k}(312))|=2\frac{(3k)!}{k!(k+1)!(k+2)!}.\]
\end{conjecture}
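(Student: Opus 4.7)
The plan is to construct an explicit bijection between $\RedVHC_k(\Av_{3k}(312))$ and standard Young tableaux of shape $(k,k,k)$. By the Frame--Robinson--Thrall hook-length formula, $|\mathrm{SYT}(k,k,k)|=\frac{2(3k)!}{k!(k+1)!(k+2)!}$, exactly the conjectured count, so exhibiting such a bijection suffices.

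First I would analyze the structure of any $(\pi,V)\in\RedVHC_k(\Av_{3k}(312))$. A counting argument, exploiting that $\pi$ avoids $312$ and that the three categories (SW endpoint, NE endpoint, descent bottom) must exactly partition the $3k$ points in the reduced case, forces $\pi$ to have precisely $2k$ left-to-right maxima, of which $k$ are NE endpoints and $k$ are SW endpoints with no overlap; the remaining $k$ points are descent bottoms, each occupying the position immediately after its paired SW endpoint. Since hooks do not cross, reading the LTR maxima from right to left produces a balanced parenthesis sequence (NE open, SW close), beginning with NE at $\R_0=(3k,3k)$ and ending with SW at $\R_\ell=(1,\pi_1)$.

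Define $\Phi:\RedVHC_k(\Av_{3k}(312))\to\mathrm{SYT}(k,k,k)$ by placing the $k$ sorted descent bottom values into row $1$, the $k$ sorted SW values into row $2$, and the $k$ sorted NE values into row $3$. Rows are increasing by construction. The column-increasing condition is equivalent to the cumulative inequalities $D(v)\geq S(v)\geq N(v)$ for all $v\in[3k]$, where $D(v),S(v),N(v)$ count values at most $v$ of each respective type. The inequality $D(v)\geq S(v)$ holds because each SW endpoint strictly exceeds its paired descent bottom. The inequality $S(v)\geq N(v)$ follows by applying the balanced parenthesis property to the suffix of LTR maxima with value at most $v$: a suffix of a balanced sequence contains at least as many closes as opens.

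For the inverse, from a tableau with column entries $(d_j,s_j,n_j)$ the $2k$ LTR max heights are taken to be $\{s_j\}\cup\{n_j\}$ in decreasing order from right to left; the inequality $s_j<n_j$ for every $j$ ensures that the induced SW/NE labeling is balanced and thus determines all point positions. Descent bottoms are then assigned greedily: process the SW endpoints from left to right and at each step give the current SW the largest still-unused descent bottom value strictly less than it. The inequalities $d_j<s_j$ guarantee a valid choice exists at every step. The main obstacle, and the technical heart of the proof, is verifying that the resulting permutation avoids $312$. A case analysis on whether the three points of a hypothetical $312$-pattern are LTR maxima or descent bottoms, using the structural facts that LTR max values increase left to right and that each descent bottom is strictly below the LTR max immediately preceding it, reduces every case to the following greedy-exchange statement: if a $312$-pattern involves descent bottoms $\nu_B,\nu_C$ placed in that positional order with $\nu_B<\nu_C$, then $\nu_C$ would have been a strictly larger unused candidate when greedy processed $\nu_B$'s gap, contradicting the greedy choice.
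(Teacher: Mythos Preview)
The statement you are addressing is stated in the paper as a \emph{conjecture}; the paper offers no proof, only numerical evidence from the triangle of values $|\RedVHC_k(\Av_{2k+i}(312))|$. There is therefore no argument in the paper to compare against, and your proposal, if it goes through, would resolve an open problem rather than reproduce an existing proof.

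That said, your strategy looks correct. The counting that forces the three types (southwest endpoint, northeast endpoint, descent bottom) to be pairwise disjoint with exactly $k$ of each is valid precisely because $n=3k$. One step you should make explicit is why every southwest endpoint is a left-to-right maximum: the disjointness just established rules out consecutive descents, and in a $312$-avoiding permutation any position $i$ with $\pi_{i-1}<\pi_i$ that is not a left-to-right maximum yields a $312$ pattern $(\pi_j,\pi_{i-1},\pi_i)$ for the largest $j<i$ with $\pi_j>\pi_i$. Your ballot-sequence verification of the column inequalities is then correct.

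Two details need to be tightened. First, in the greedy-exchange step you must check that $\nu_C$ is actually an eligible candidate at $\nu_B$'s gap, i.e.\ that $\nu_C$ is strictly less than the southwest value $s$ immediately preceding $\nu_B$; this holds because the ``$3$'' of the hypothetical $312$ pattern is a left-to-right maximum at or left of that southwest point, hence of value at most $s$, while $\nu_C$ sits strictly below it. Second, you have not argued that $\Phi^{-1}\circ\Phi$ is the identity, i.e.\ that greedy reproduces the original descent-bottom placement; but this is the same exchange argument run on the original permutation: if $\pi$ first deviated from greedy at gap $g_i$, the value greedy would have selected there appears in $\pi$ at some later gap, and together with the preceding southwest value this produces a $312$ in $\pi$. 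With these two points filled in, your bijection to $\mathrm{SYT}(k,k,k)$ is complete and the hook-length count gives the conjectured formula.
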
   

The numbers on the right-hand side of the equation in Conjecture \ref{Conj1} are known as ``3-dimensional Catalan numbers." They are given in the OEIS sequence A005789. 

\begin{conjecture}\label{Conj2}
For every $k\geq 1$, we have \[\sum_{i=1}^k(-1)^{k-i}|\RedVHC_k(\Av_{2k+i}(312))|=C_k.\]
\end{conjecture}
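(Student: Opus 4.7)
My plan is to reformulate Conjecture \ref{Conj2} as a single polynomial evaluation and then transport it through the bijections of the paper to a lattice-walk identity. The refined expression
\[|\VHC_k(\Av_n(312))|=\sum_{i=1}^k{n\choose 2k+i}|\RedVHC_{k}(\Av_{2k+i}(312))|\]
presents $f_k(n):=|\VHC_k(\Av_n(312))|$ as a polynomial in $n$ of degree at most $3k$. Since $\binom{-1}{m}=(-1)^m$, evaluating at $n=-1$ gives
\[f_k(-1)=\sum_{i=1}^k(-1)^{i}|\RedVHC_{k}(\Av_{2k+i}(312))|=(-1)^k\sum_{i=1}^k(-1)^{k-i}|\RedVHC_{k}(\Av_{2k+i}(312))|,\]
so Conjecture \ref{Conj2} is equivalent to the identity $f_k(-1)=(-1)^kC_k$.

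To compute $f_k(n)$, I would chase the bijections $\widehat\LL_n\colon\VHC(\Av_n(312))\to\Int(\M_{n-1}^C)$ and $\varphi_{n-1}\colon\Int(\M_{n-1}^C)\to\N_{n-1}$. A VHC $(\pi,V)$ has exactly $k$ hooks if and only if the first Motzkin path $P$ in $\widehat\LL_n(\pi,V)=(P,Q)$ has exactly $k$ $U$ steps, and since $\varphi_{n-1}(P,Q)$ has second coordinate equal to $P$, this translates to counting pairs $(X,Y)\in\N_{n-1}$ in which $Y$ has exactly $k$ $U$'s. Separating the $(E,E)$ positions from the five non-$(0,0)$ steps as in Proposition \ref{prop:size of nn} refines $|\N_{n-1}|=\sum_m\binom{n-1}{m}\mathfrak w(m)$ to
\[f_k(n)=\sum_{m\geq 2k}\binom{n-1}{m}\mathfrak w(m,k),\]
where $\mathfrak w(m,k)$ counts length-$m$ walks of Definition \ref{def:mathfrak w} that use exactly $k$ steps of positive $y$-displacement (namely steps in $\{(-1,1),(0,1)\}$). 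Using $\binom{-2}{m}=(-1)^m(m+1)$, Conjecture \ref{Conj2} is therefore equivalent to the walk identity
\[\sum_{m\geq 2k}(-1)^m(m+1)\,\mathfrak w(m,k)=(-1)^kC_k.\]
I have verified this directly for $k=1$: $\mathfrak w(2,1)=\mathfrak w(3,1)=1$ and $\mathfrak w(m,1)=0$ for $m\geq 4$, giving $3-4=-1=(-1)^1 C_1$.

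To prove the walk identity, I would attempt a sign-reversing involution in which the factor $(m+1)$ is interpreted as a marker placed in one of $m+1$ positions along the walk. A candidate involution acts locally at the marker to insert or delete a single step, changing the length (and hence the sign $(-1)^m$) by one while preserving the number of positive $y$-displacement steps and the origin-to-origin condition. The natural Catalan-indexed fixed points are walks confined to the $y$-axis using only $(0,\pm 1)$ steps, which form Dyck paths of length $2k$. The main obstacle is arranging a local insertion/deletion move that simultaneously respects the quarter-plane boundary and the closure of the walk at the origin, given the five interacting step types; the $(1,-1)$ and $(-1,1)$ steps mixing $x$- and $y$-coordinates are the most troublesome. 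A viable alternative is to directly involute $\bigsqcup_{i=1}^k\RedVHC_{k}(\Av_{2k+i}(312))$ by a local move on the plot that inserts or removes a single point (for instance, a descent bottom not incident to any hook), preserving $312$-avoidance and the reduced condition from Section \ref{section:future work}. Either way, pinning down the exact set of ``Catalan-like'' fixed points is the principal difficulty, though the equality $C_k=|\Av_k(312)|$ hints at a structural correspondence worth chasing.
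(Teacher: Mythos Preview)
The statement you are trying to prove is listed in the paper as a \emph{conjecture} in the Future Work section; the paper offers no proof, so there is nothing to compare your argument against.

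Your reformulation is correct and genuinely useful. Writing $f_k(n)=|\VHC_k(\Av_n(312))|$ as the polynomial $\sum_i\binom{n}{2k+i}|\RedVHC_k(\Av_{2k+i}(312))|$ and evaluating at $n=-1$ is valid, and your tracking of the hook count through $\widehat\LL_n$ and $\varphi_{n-1}$ is accurate: $k$ hooks become $k$ up-steps in $Y=P$, hence $k$ steps of positive $y$-displacement in the associated quarter-plane walk. The resulting identity $\sum_{m}(-1)^m(m+1)\,\mathfrak w(m,k)=(-1)^kC_k$ is a clean restatement, and the sum is finite since $\mathfrak w(m,k)=0$ unless $2k\le m\le 3k$.

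That said, you do not have a proof. The heart of the matter is the sign-reversing involution, and you yourself flag the two real obstructions: a local insertion/deletion at a marked position has to preserve both the quarter-plane constraint and closure at the origin, and the mixed-coordinate steps $(1,-1)$ and $(-1,1)$ prevent any obvious decoupling of the $x$- and $y$-coordinates. Your candidate fixed-point set (walks confined to the $y$-axis) has the right cardinality $C_k$, but nothing in your sketch explains why those walks, and only those, would survive the involution. The alternative of involuting $\bigsqcup_i\RedVHC_k(\Av_{2k+i}(312))$ directly is equally undeveloped: removing or inserting a single point in a reduced VHC will typically destroy reducedness or $312$-avoidance, and you give no mechanism for repairing this. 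As it stands, this is a promising reformulation of an open problem, not a proof.
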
   

\begin{conjecture}\label{Conj3}
For every $k\geq 1$, the polynomial \[\sum_{i=1}^k|\RedVHC_k(\Av_{2k+i}(312))|x^{k-i}\] has only real roots. 
\end{conjecture}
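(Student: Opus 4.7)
The plan is to establish real-rootedness by realizing $\{p_k\}_{k\geq 1}$, where $p_k(x) := \sum_{i=1}^k |\RedVHC_k(\Av_{2k+i}(312))|\, x^{k-i}$, as a sequence of orthogonal polynomials. Since $\deg p_k = k-1$, this follows from Favard's theorem the moment we can exhibit a three-term recurrence
\[
p_k(x) = (a_k x + b_k)\,p_{k-1}(x) - c_k\, p_{k-2}(x) \qquad (k \geq 2)
\]
with $a_k c_k > 0$ for all $k$. Real-rootedness (in fact simplicity of the roots and interlacing of the roots of $p_{k-1}$ and $p_k$) would then be automatic, giving a strengthening of Conjecture~\ref{Conj3}.

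To produce such a recurrence, I would first refine the bijective framework of Section~\ref{section:312} to track both the number of hooks $k$ and the \emph{reduced} condition. Under the composite bijection $\varphi_{n-1} \circ \widehat\LL_n \colon \VHC(\Av_n(312)) \to \N_{n-1}$, the number of hooks equals the number of $U$ steps in $Y$, and the role played by each index $i$ in the VHC (descent top/southwest endpoint, descent bottom, northeast endpoint, or ``filler'' point) is determined locally by the pair $(X_i,Y_i)$. The reduced condition exactly forbids the filler category, cutting $\N_{n-1}$ down to an explicit sub-family whose step multiplicities are rigidly prescribed by $k$ and by the residual parameter $i$ with $n = 2k+i$. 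I would then attempt to encode the resulting enumeration by a Motzkin-style continued fraction in the spirit of Flajolet, whose convergent denominators satisfy a three-term recurrence of the desired shape; with luck, those denominators will recover $p_k(x)$ after an appropriate substitution.

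The main obstacle is obtaining a clean $k$-recurrence: the reduced condition entangles $k$ and $i$ in a way that may force a recurrence mixing both variables, making Favard less directly applicable. If the continued-fraction route does not close, I would fall back on two alternatives. First, proving that the finite sequence $(|\RedVHC_k(\Av_{2k+i}(312))|)_{i=1}^{k}$ is a P\'olya frequency sequence is equivalent to real-rootedness, and by the Aissen--Schoenberg--Whitney theorem it reduces to total positivity of the associated Toeplitz matrix; this is a natural target for a Lindstr\"om--Gessel--Viennot non-intersecting-paths argument applied to the $\N_{n-1}$-representatives of reduced VHCs. Second, one could try to realize $p_k(x)$ as the characteristic polynomial of a combinatorially-built symmetric tridiagonal matrix, which again forces real roots. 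In all three approaches, Conjectures~\ref{Conj1} and~\ref{Conj2} are strong sanity checks: they pin down, respectively, the leading coefficient of $p_k$ as a $3$-dimensional Catalan number and the signed coefficient sum $|p_k(-1)|$ as $C_k$, so any proposed recurrence, Toeplitz matrix, or Jacobi matrix must reproduce both identities.
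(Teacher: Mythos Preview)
The statement you are attempting to prove is \emph{not} proved in the paper: it is Conjecture~\ref{Conj3}, listed explicitly in the Future Work section as an open problem. There is therefore no proof in the paper to compare your proposal against. What you have written is not a proof either; it is a research outline with several candidate strategies, each honestly flagged as speculative (``I would then attempt'', ``with luck'', ``if the continued-fraction route does not close''). None of the three approaches is carried through to a conclusion, so the conjecture remains open after your proposal just as it was before.

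On the substance of your plan: the Favard/orthogonal-polynomial route is a natural first instinct for real-rootedness, but you give no evidence that a three-term recurrence in $k$ alone actually exists for these polynomials, and you yourself identify the likely obstruction (the reduced condition couples $k$ and $i$). Without at least verifying such a recurrence numerically for the tabulated rows, this is pure speculation. The total-positivity and tridiagonal-matrix alternatives are likewise standard tools, but again you supply no construction, only the observation that \emph{if} one could build the right matrix the result would follow. Your remark that the number of hooks equals the number of $U$ steps in $Y=P$ under $\varphi_{n-1}\circ\widehat\LL_n$ is correct and potentially useful, as is the idea of tracking the reduced condition through the bijection; but turning that into any of the three proposed structures is the entire content of the problem, and that work is absent. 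In short: reasonable heuristics, no proof, and nothing here resolves the conjecture.
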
   

Note that Conjecture \ref{Conj3} implies the weaker statement that each row of the above table is log-concave. This in turn implies the even weaker statement that these rows are unimodal. Even if Conjecture \ref{Conj3} is out of reach, it would still be very interesting to prove the weaker statement that these rows are unimodal. 

\subsection{Relative Cardinalities of $\VHC(\Av_n(\tau))$}
Combining Theorem \ref{thm:312 asymtotics} with results from \cite{Defant}, we see that we have fairly precise asymptotic information about the sequence $|\VHC(\Av_n(\tau))|$ whenever $\tau\in S_3\setminus\{321\}$. However, the pattern $321$ remains mysterious in this context. As Defant remarks in \cite{Defant}, it would be interesting to prove anything nontrivial about the sets $\VHC(\Av_n(321))$. We can compute the sizes of these sets for $n\leq 13$ and see that they ostensibly grow much more quickly than $|\VHC(\Av_n(\tau))|$ for other length-$3$ patterns $\tau$. 

This leads us to speculate the following conjecture, which we know to be true for $n\leq r$. Recall that the \emph{weak Bruhat order} on $S_r$ is the partial order $\leq_B$ on $S_r$ generated by saying that $\sigma\leq_B\tau$ if there exists $i\in[r-1]$ such that $\sigma_i<\sigma_{i+1}$ and such that $\tau$ is obtained from $\sigma$ by swapping the entries $\sigma_i$ and $\sigma_{i+1}$. 

\begin{conjecture}\label{Conj4}
Choose $\sigma,\tau\in S_r$ such that $\sigma\leq_B\tau$. Then $|\VHC(\Av_n(\sigma))|\leq|\VHC(\Av_n(\tau))|$ for all $n\geq 1$. 
\end{conjecture}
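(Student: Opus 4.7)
The plan is to reduce Conjecture \ref{Conj4} to cover relations in the weak Bruhat order. Since $\leq_B$ is generated by its covers and inequalities compose along a saturated chain, it suffices to show that $|\VHC(\Av_n(\sigma))| \leq |\VHC(\Av_n(\tau))|$ whenever $\sigma\lessdot\tau$, meaning $\tau$ is obtained from $\sigma$ by swapping two adjacent entries $\sigma_i<\sigma_{i+1}$. The natural strategy for each such cover is to construct an injection
\[
\Psi_{\sigma,\tau}\colon\VHC(\Av_n(\sigma))\hookrightarrow\VHC(\Av_n(\tau)),
\]
generalizing the injection $\W_n=\nw\circ\swl$ of Sections 5 and 6, which handles the cover $132\lessdot 312$ in $S_3$.

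As a warm-up I would first try to dispatch all six covers in $S_3$. The cover $132\lessdot 312$ is $\W_n$ itself. The covers $123\lessdot 132$ and $123\lessdot 213$ should be quick because $\Av_n(123)$ consists of unions of two decreasing subsequences, so the space of admissible VHCs there is very constrained and plausibly injects into the much larger target sets. The covers $231\lessdot 321$ and $312\lessdot 321$ look accessible by a ``hook-addition'' injection: a VHC on a $312$- or $231$-avoider already places northeast endpoints at highly constrained positions, and one might hope to exhibit each such VHC as a canonical element of a larger family of $321$-avoiding VHCs. The cover $213\lessdot 231$ is the most delicate, since the reverse-complement symmetry sends descent tops to descent bottoms and thus does not transport VHCs directly from $132\lessdot 312$; a genuinely new argument, or some less obvious involution on VHCs, is likely needed here.

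The main obstacle is generalizing the $\swl$--$\nw$ machinery to arbitrary cover relations in $S_r$. The maps $\swl$ and $\nw$ are anchored by the rigid corner decomposition of $132$-avoiders recorded in Fact \ref{fact:draw swl}, and the proof that $\W_n$ is well-defined, culminating in Theorem \ref{thm:stripe structure}, relies on this decomposition at every step. For a general pattern $\sigma$ no such global decomposition is available. A more promising route is to define a local sliding operator acting on occurrences of $\sigma$ in $\pi$: pick a canonical occurrence, identify the two entries playing the roles of $\sigma_i$ and $\sigma_{i+1}$, and slide them past each other in a way that preserves descent tops and the existence of a valid hook configuration, while using an $\nw$-type map to relocate northeast endpoints onto left-to-right maxima of the new permutation. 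The hard part will be ensuring coherence of this local move across the many overlapping copies of $\sigma$ in $\pi$ and verifying both that the resulting permutation avoids $\tau$ and that the assignment is injective; a full resolution of Conjecture \ref{Conj4} will almost certainly require techniques substantially beyond those developed here.
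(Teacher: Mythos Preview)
The statement you are addressing is labeled \emph{Conjecture} in the paper and appears in the Future Work section; the paper does not prove it and does not claim to. There is therefore no ``paper's own proof'' to compare your proposal against. Your write-up is not a proof either, and you say as much in your final sentence: it is a research outline listing which cover relations in $S_3$ look tractable and which look hard, followed by a sketch of what a general attack might look like.

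Taken on those terms, the outline is reasonable and in fact closely tracks the paper's own speculation: the paragraph after Conjecture~\ref{Conj4} explicitly suggests generalizing $\W_n$ to an injection $\W_{n,\sigma,\tau}$ for adjacent-transposition covers, and notes the obstruction that hook count cannot always be preserved. So your plan is essentially the one the author already had in mind. But as a proof it has a genuine gap at every step past the single case $132\lessdot 312$: for none of the other five covers in $S_3$ do you actually construct an injection, and your own language (``should be quick'', ``one might hope'', ``a genuinely new argument\ldots is likely needed'') makes clear these are hopes rather than arguments. In particular, the cover $213\lessdot 231$ is not handled, and your proposed ``local sliding on a canonical occurrence of $\sigma$'' for general $r$ has no definition of canonical, no proof of injectivity, and no verification that the image avoids $\tau$. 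Until those are supplied, this remains a plan, not a proof, which is consistent with the paper's treatment of the statement as open.
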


Moreover, by Corollary \ref{cor:swr properties} the map $\W_n$ effectively changes every occurrence of the pattern 132 in a permutation to an occurrence of 312 and updates the valid hook configuration accordingly. This leads us to wonder if we could generalize $\W_n$ to an injection $\W_{n,\sigma,\tau}:\VHC(\Av_n(\sigma))\to\VHC(\Av_n(\tau))$ whenever $\sigma$ and $\tau$ differ by a single transposition. It is impossible for $\W_{n,\sigma,\tau}$ to always preserve the number of hooks like $\W_n$ does; a simple counterexample is that $\VHC_2(\Av_5(312))>\VHC_2(\Av_5(321))$. However, it would be interesting to ask if there is in general such a $\W_{n,\sigma,\tau}$ where $\W_{n,\sigma,\tau}(\pi, V)$ always has at most as many hooks as $(\pi, V)$. In the $r=3$ case, the existence of such a map is supported numerically. This question is also natural in that replacing occurrences of $\tau$ with occurrences of $\sigma$ in $\pi$ intuitively should not increase the number of descents.

\section{Acknowledgements}

This research was conducted at the REU at the University of Minnesota Duluth, which is supported by NSF/DMS grant 1659047 and NSA grant H98230-18-1-0. I would like to thank Joe Gallian for running the program and Amanda Burcroff and Avi Zeff for their proofreading suggestions. In addition, I am especially thankful to Colin Defant for suggesting the project and for invaluable advice throughout, including suggestions on how to prove Proposition \ref{prop:not D finite} and Theorem \ref{thm:312 asymtotics}.

\end{document}